\providecommand{\tabularnewline}{\\}
\numberwithin{equation}{section}
\numberwithin{figure}{section}
\theoremstyle{plain}
\newtheorem{thm}{Theorem}
  \theoremstyle{plain}
  \newtheorem{cor}{Corollary}
  \theoremstyle{plain}
  \newtheorem{lem}{Lemma}
  \theoremstyle{remark}
  \theoremstyle{plain}
  \newtheorem{prop}{Proposition}
  \theoremstyle{plain}
  \newtheorem{defn}{Definition}
\begin{document}

\title{Subconvexity for a double Dirichlet series and non-vanishing of $L$-functions}

\author{Alexander Dahl}

\subjclass[2010]{Primary: 11M32, Secondary: 11F68}

\address{
Department of Mathematics and Statistics, York University, 4700 Keele Street, Toronto, Ontario, Canada M3J 1P3
}

\email{aodahl@yorku.ca}

\begin{abstract}
We study a double Dirichlet series of the form $\sum_{d}L(s,\chi_{d}\chi)\chi'(d)d^{-w}$,
where $\chi$ and $\chi'$ are quadratic Dirichlet characters with
prime conductors $N$ and $M$ respectively. A functional equation
group isomorphic to the dihedral group of order 6 continues the function
meromorphically to $\mathbb{C}^{2}$. A convexity bound at the central
point is established to be $(MN)^{3/8+\varepsilon}$ and a subconvexity
bound of $(MN(M+N))^{1/6+\varepsilon}$ is proven. The developed theory is used
to prove an upper bound for the smallest positive integer $d$ such
that $L(1/2,\chi_{dN})$ does not vanish, and further applications of subconvexity bounds to this problem are presented.
\end{abstract}

\maketitle

\section{Introduction}

The use of multiple Dirichlet series in number theoretic problems,
as well as the intrinsic structure they possess, have evolved the
subject to be a study in its own right. Indeed, Weyl group multiple
Dirichlet series are considered fundamental objects whose structures
are intimately linked to their analytic features \cite{weylgroup}.
In this paper we are interested in the size of a certain type of double
Dirichlet series outside the region of absolute convergence, and some
of its arithmetic consequences. For ordinary $L$-functions there
is the well-known concept of convexity bound as a generic upper bound
in the critical strip, and improvements in the direction of the Lindelöf
hypothesis often have deep implications. In the situation of multiple
Dirichlet series, the notion of convexity is not so obvious to formalize.
Nevertheless there is a fairly natural candidate for a ``trivial''
upper bound of a multiple Dirichlet series, and it is an important
problem to improve this estimate. This has first been carried out in \cite{blomer} for the Dirichlet series defined by
\[
Z(s,w)=\zeta^{(2)}(2s+2w-1)\sum_{d\text{ odd}}L^{(2)}(s,\chi_{d})d^{-w}
\]
at $s=1/2+it$, $w=1/2+iu$ simultaneously in the $t,u$-aspect, where
the $^{(2)}$ superscript denotes that we are removing the Euler factor
at $2$, and $\chi_{d}$ is the Jacobi symbol $(\frac{d}{\cdot})$.
Here we study a non-archimedean analogue. We can expand the $L$-function
in the summand as a sum indexed by $n$. Instead of twisting by $n^{it}$,
$d^{iu}$, we twist by quadratic primitive Dirichlet characters $\chi(n)$,
$\chi'(d)$ of conductors $N,M$, respectively. More precisely, we
consider
\[
Z(s,w;\chi,\chi'):=\sum_{\substack{d\geq1\\
(d,2MN)=1
}
}\frac{L^{(2MN)}(s,\chi_{d_{0}}\chi)\chi'(d)P_{d_{0},d_{1}}^{(\chi)}(s)}{d^{w}}
\]
for sufficiently large $\Re s,\Re w$, where we write $d=d_{0}d_{1}^{2}$
with $d_{0}$ squarefree. The $P$ factors are a technical complication
necessary in order to construct functional equations, which we accomplish
thus: if we expand the $L$-function in the numerator and switch the
summation, we can relate it to a similar double Dirichlet series with
the arguments and the twisting characters interchanged, with modified
correction factors. Next, we have a functional equation obtained by
application of the functional equation for Dirichlet $L$-functions,
which maps $(s,w)$ to $(1-s,s+w-{\textstyle \frac{1}{2}})$. Applying
the switch of summation formula to this, we obtain a functional equation
mapping $(s,w)$ to $(s+w-{\textstyle \frac{1}{2}},1-w)$. These continue
$Z$ to the complex plane except for the polar lines $s=1$, $w=1$,
and $s+w=3/2$. They also generate a group isomorphic to $D_{6}$,
the dihedral group of order 6, which is isomorphic to the Weyl group
of the root system of type $A_{2}$. For this reason $Z(s,w;\chi,\chi')$
is considered a type $A_{2}$ multiple Dirichlet series \cite{weylgroup}.
Interestingly, such objects are Whittaker coefficients in the Fourier
expansion of a $GL(3)$ Eisenstein series of minimal parabolic type
on a metaplectic cover of an algebraic group whose root system is
the dual root system of $A_{2}$ \cite{weyl-iii}. In fact, it is
conjectured that a similar relationship is true between multiple Dirichlet
series of type $A_{r}$ and $GL(r+1)$ Eisenstein series. Going down
in dimension, type $A_{2}$ multiple Dirichlet series can be constructed
from 1/2-integral weight Eisenstein series \cite{eisenstein-half-integral}.
As for the correction polynomials $P_{d_{0},d_{1}}^{(\chi)}(s)$,
their existence, uniqueness, and construction were studied extensively
in \cite{bfh} and \cite{DGH03} in the more general setting of $GL(r)$
multiple Dirichlet series. They are unique in the case
of $r$ up to 3.

The notion of convexity is no longer canonical in the case of double
Dirichlet series, since our initial bounds for $Z(s,w;\chi,\chi')$
depend on what we know about bounds on their coefficients, and our
knowledge here is only partial. Nonetheless, if we use the Lindelöf
hypothesis on average (cf.\ Theorem \ref{thm:HB-fouth-moment}),
then through careful choice of initial bounds and functional equation
applications (cf.\ \S\ref{sub:convexity-bound}), we obtain the bound
\[
Z({\textstyle \frac{1}{2}},{\textstyle \frac{1}{2}};\chi,\chi')\ll(MN)^{3/8+\varepsilon},
\]
which we call the convexity bound. In this work, we present the following
subconvexity result.
\begin{thm}
\label{thm:subconvexity}For quadratic Dirichlet characters $\chi$
and $\chi'$ of conductors $N$ and $M$ which are prime or unity,
and for $\varepsilon>0$, we have the bound
\[
Z({\textstyle \frac{1}{2}},{\textstyle \frac{1}{2}}+it;\chi,\chi')\ll_{\varepsilon}(1+\vert t\vert)^{2/\varepsilon}(MN(M+N))^{1/6+\varepsilon}.
\]

\end{thm}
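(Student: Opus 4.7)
The plan is to adapt the subconvexity argument of \cite{blomer} (which treated the archimedean $(t,u)$-aspect of an analogous Dirichlet series) to the level aspect considered here. The three main ingredients will be: an approximate functional equation for $Z$ at the central point, the internal functional equations generating the $D_6$ action described in the introduction, and the second-moment consequence of the averaged Lindel\"of estimate encoded in Theorem \ref{thm:HB-fouth-moment}.

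First I would derive an approximate functional equation for $Z(1/2,1/2+it;\chi,\chi')$ by inserting a smoothed cutoff and shifting the contour of a Mellin inversion. This expresses the central value as a smoothed partial sum of shape $\sum_{d\leq X}\chi'(d)L(1/2,\chi_d\chi)d^{-1/2-it}$, together with a dual sum produced by the switch-of-summation involution. I would then apply each of the non-trivial involutions in $D_6$ in turn to obtain several inequivalent representations of the same central value. The two non-commuting involutions exchange the roles of $(\chi,N)$ and $(\chi',M)$, so different representations are efficient in the regimes $M\gg N$ and $N\gg M$; this asymmetry is the source of the symmetric factor $M+N$ in the final bound.

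Next, for each truncated representation I would apply Cauchy--Schwarz in the $d$-variable, separating the weight $\chi'(d)d^{-1/2-it}$ from the $L$-value. Theorem \ref{thm:HB-fouth-moment} then yields $\sum_{d\leq X}|L(1/2,\chi_d\chi)|^2\ll X(X(M+N))^{\varepsilon}$, so each sum is bounded of shape $X^{1/2+\varepsilon}$ up to factors $(MN)^{\varepsilon}$. The dual sums, after one or two applications of the internal functional equations, will take a similar form but with the truncation parameter replaced by a complementary length depending on how $MN(M+N)$ distributes between the indices.

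Finally I would optimize $X$: balancing the main contribution against the dual contributions forces a truncation of order $(MN(M+N))^{1/3}$ and produces the claimed exponent $1/6$ in place of the convexity exponent $3/8$. The principal obstacle will be organizing the $D_6$ combinatorics so that, after applying the involution best suited to whichever of $M$ and $N$ is larger, the dual sum really is short enough to allow a saving; the correction polynomials $P_{d_0,d_1}^{(\chi)}(s)$ and the removed local factors at ramified primes must also be tracked carefully through each functional equation lest they inflate the bound by a power of $MN$. The polynomial factor $(1+|t|)^{2/\varepsilon}$ reflects a crude Stirling estimate on the weight functions produced by the approximate functional equation and is simply absorbed into the error bookkeeping rather than optimized.
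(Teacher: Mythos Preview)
Your overall architecture is right---approximate functional equation for $Z$, functional equations to produce a dual sum, then optimize the truncation---but the heart of the argument is misidentified, and as written the proposal only recovers the convexity bound.

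The gap is the step where you apply Cauchy--Schwarz in $d$ and then invoke Theorem~\ref{thm:HB-fouth-moment} to claim
\[
\sum_{d\leq X}\bigl|L(\tfrac12,\chi_d\chi)\bigr|^2 \ll X\,(X(M+N))^{\varepsilon}.
\]
This does \emph{not} follow from the fourth-moment bound: Theorem~\ref{thm:HB-fouth-moment} gives $\sum_{d_0\leq X}|L(\tfrac12,\chi_{d_0}\chi)|^4\ll (XN)^{1+\varepsilon}$, and Cauchy--Schwarz then yields only $\sum|L|^2\ll X^{1/2}(XN)^{1/2+\varepsilon}=XN^{1/2+\varepsilon}$. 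Plugging this into your scheme and balancing against the dual sum returns exactly $(MN)^{3/8+\varepsilon}$, i.e.\ convexity. In fact the uniform bound $\sum_{d\leq X}|L(\tfrac12,\chi_d\chi)|^2\ll X^{1+\varepsilon}$ is not even true once $X$ is small compared to the conductor of $\chi$: the correct upper bound (which itself needs the large sieve) is $X+(XN)^{1/2}$, and the second term dominates precisely in the short dual sums. Cauchy--Schwarz in $d$ discards the bilinear structure, and the fourth-moment theorem alone cannot recover it.

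What the paper actually does---and what you are missing---is to open each $L(\tfrac12,\chi_{d_0}\chi)$ by its own approximate functional equation (\ref{eq:L-approx-functional}), producing a genuine double character sum $\sum_{d\leq Y}\sum_{n\leq (Yj)^{1/2}}\chi_{d_0}(n)\chi(n)\chi'(d)(dn)^{-1/2}$, and then apply Heath-Brown's quadratic large sieve (Theorem~\ref{thm:HeathBrown-Theorem-1}, in the form of Corollary~\ref{cor:heath-brown-large-sieve}) directly to this bilinear form. That is the input that beats convexity: it gives the bound $(Y+(Yj)^{1/2})^{1/2+\varepsilon}$ of Lemma~\ref{lem:D-bound}, after which the optimization $X=M^{2/3}N^{1/3}$ yields $(MN(M+N))^{1/6+\varepsilon}$. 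So the decisive tool is the large sieve for real characters, not the averaged Lindel\"of estimate; the latter is used in the paper only to set up the convexity benchmark in \S\ref{sub:convexity-bound}.
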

We first point out that the $t$-aspect bound can be drastically improved,
but our focus here is on bounds in the moduli aspect.
For purposes of comparison with the convexity bound, we point out
that, via the geometric-arithmetic mean inequality, we have $(MN)^{3/8+\varepsilon}\ll(MN(M+N))^{1/4+\varepsilon}$.
This result is comparable to the subconvexity bound obtained by V.
Blomer in the archimedean case, particularly $\vert sw(s+w)\vert^{1/6+\varepsilon}$
for $\Re s=\Re w={\textstyle \frac{1}{2}}$, an improvement upon the
convexity bound with 1/4 replaced by 1/6.

A useful arithmetic application of the theory developed for $Z(s,w;\chi,\chi')$
is finding a bound for the least $d$ such that $L({\textstyle \frac{1}{2}},\chi_{dN})$
does not vanish, where $N$ is a large fixed prime. It is expected
that all ordinates of any zeros of $L(s,\chi)$ on the critical line
are linearly independent over the rationals, so that in particular
it is expected that $L({\textstyle \frac{1}{2}},\chi)$ is nonzero
for any $\chi$. Random matrix theory provides further evidence to
this conjecture: The lowest zero in families of $L$-functions (such
as $L(s,\chi_{d})$) is expected to be distributed like the ``smallest''
eigenvalue (i.e., closest to unity) of a certain matrix family (depending
on the family of $L$-function). Since the corresponding measure vanishes
at zero, it suggests that the smallest zero of a Dirichlet $L$-function
is ``repelled'' from the real axis. In the case of quadratic twists
of $GL(2)$ automorphic forms, in particular twists of elliptic curves,
there is a connection to Waldspurger's theorem \cite{waldspurger}
which states that $L({\textstyle \frac{1}{2}},f\times\chi)$ is proportional
to the squares of certain Fourier coefficients of a half-integral
weight modular form, uniformly in $\chi$. We focus here on the simpler
case of twists of Dirichlet characters. In particular, we have the
following theorem which is proven using the theory of this double Dirichlet series.
\begin{thm}
\label{thm:nonvanishing}Let $N$ be an odd prime, and let $D(N)=d$
denote the smallest positive integer such that $L({\textstyle \frac{1}{2}},\chi_{dN})$
does not vanish. Then we have $D(N)\ll N^{1/2+\varepsilon}$.
\end{thm}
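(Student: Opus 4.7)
The plan is to use the double Dirichlet series $Z(1/2,w;\chi_N,\mathbf{1})$---the case $M=1$, with $\chi_N$ the Kronecker symbol modulo $N$ and trivial outer character---to control a smoothed first moment of the central values $L(1/2,\chi_{dN})$ for $d\asymp X$, and to show this moment is nonzero at the scale $X\asymp N^{1/2+\varepsilon}$. Let $\phi$ be a smooth, nonnegative bump function on $(0,\infty)$ supported in $[1/2,2]$. By Mellin inversion, for $\sigma$ in the region of absolute convergence,
\[
S(X):=\sum_{(d,2N)=1}L\!\left(\tfrac12,\chi_{d_{0}}\chi_N\right)P^{(\chi_N)}_{d_{0},d_{1}}\!\left(\tfrac12\right)\phi(d/X)=\frac{1}{2\pi i}\int_{(\sigma)}Z\!\left(\tfrac12,w;\chi_N,\mathbf{1}\right)\widetilde\phi(w)X^{w}\,dw.
\]
If $S(X)\ne 0$ then some $d\asymp X$ must support a nonvanishing $L(1/2,\chi_{dN})$, so the task is reduced to ensuring the main term dominates the error after a contour shift.

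Shifting to $\Re w=1/2$ picks up a pole at $w=1$: since $(s,w)=(1/2,1)$ lies on both polar lines $w=1$ and $s+w=3/2$, the residue contribution has the shape $X\,Q(\log X)$ with $Q$ a polynomial of degree at most $1$ whose leading coefficient is explicitly nonzero and involves $L(1,\chi_N)$ together with local Euler factors at $2$ and $N$. The remaining integral on $\Re w=1/2$ is controlled by Theorem \ref{thm:subconvexity} with $M=1$, which gives $Z(1/2,1/2+it;\chi_N,\mathbf{1})\ll(1+|t|)^{2/\varepsilon}N^{1/3+\varepsilon}$, so the vertical-line contribution is $\ll X^{1/2}N^{1/3+\varepsilon}$. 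Direct comparison of main and error terms already delivers the weaker bound $D(N)\ll N^{2/3+\varepsilon}$.

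To recover the sharper exponent $1/2+\varepsilon$ claimed in the theorem, pair the first moment with an upper bound on the second moment $M_{2}(X):=\sum_{d}|L(1/2,\chi_{dN})|^{2}\phi(d/X)\ll X(XN)^{\varepsilon}$, which follows from the Heath-Brown quadratic large sieve combined with the fourth-moment bound quoted as Theorem \ref{thm:HB-fouth-moment}, and apply Cauchy--Schwarz in the form
\[
\#\bigl\{d\asymp X:L\!\left(\tfrac12,\chi_{dN}\right)\ne 0\bigr\}\;\ge\;\frac{|S(X)|^{2}}{M_{2}(X)}.
\]
A mollified variant $\sum_{d}L(1/2,\chi_{dN})M_{Y}(\chi_{dN})\phi(d/X)$, with mollifier $M_{Y}(\chi)=\sum_{m\le Y}x_{m}\chi(m)m^{-1/2}$, expands upon swapping $m$- and $d$-summations into a weighted superposition of integrals of $Z(1/2,w;\chi_N,\chi_{m})$ for $m$ prime, each bounded on the critical line by $(mN(m+N))^{1/6+\varepsilon}$ by Theorem \ref{thm:subconvexity}; summing these errors against the weights and optimizing $Y$ in terms of $X$ and $N$ is the mechanism that produces the exponent $1/2+\varepsilon$.

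The principal obstacle is the extraction of the mollified main term: the pole of $Z(1/2,w;\chi_N,\chi_{m})$ at $w=1$ arises from the single diagonal $n=m$ in the Dirichlet expansion and carries a sign $\chi_{N}(m)\chi_{m}(N)$ dictated by quadratic reciprocity, so assembling these residues against the weights $x_{m}m^{-1/2}$---in a manner compatible with the correction polynomials $P^{(\chi_N)}_{d_{0},d_{1}}$ appearing in the definition of $Z$---to produce a coherent main term of order $X$ demands a careful combinatorial analysis and is the technical heart of the argument.
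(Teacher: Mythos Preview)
Your first paragraph---shift to $\Re w=1/2$ and apply Theorem~\ref{thm:subconvexity}---is exactly the route the paper discusses in its appendix, and as you correctly note it only yields $D(N)\ll N^{2/3+\varepsilon}$.

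The mechanism you then propose to recover the exponent $1/2$ has a genuine gap. Mollification together with Cauchy--Schwarz against a second moment is a device for bounding \emph{below the number} of $d\asymp X$ with $L(1/2,\chi_{dN})\ne 0$; it does not lower the threshold $X$ at which one can detect a \emph{single} nonvanishing value. That threshold is governed purely by the first moment: one needs the main term to beat the error in $S(X)$. Inserting a mollifier $M_Y$ only makes things worse on the error side---summing the individual bounds $X^{1/2}(mN(m+N))^{1/6+\varepsilon}$ over $m\le Y$ with weights $|x_m|m^{-1/2}$ produces a total error strictly larger than the $m=1$ term alone, so no optimization in $Y$ can push the threshold below $N^{2/3}$.

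The paper proceeds differently: it shifts the contour \emph{past} the critical line to $\Re w=-\varepsilon$, applies the iterated functional equation~(\ref{eq:functional-3}) to relate $Z(\tfrac12,-\varepsilon+it;\chi_N,\psi_1)$ to values of $Z$ at $(1-w,\tfrac12)$ with $\Re(1-w)=1+\varepsilon$, and then invokes the Lindel\"of-on-average bound~(\ref{eq:convex-trivial-2}). This gives $Z(\tfrac12,-\varepsilon+it;\chi_N,\psi_1)\ll (1+|t|)^{1+\varepsilon}N^{1/2+\varepsilon}$ directly, with no subconvexity input. The resulting error term in~(\ref{eq:S-asymptotic-arbitrary-main-term}) is $O(N^{1/2+\varepsilon})$, essentially independent of $X$, and comparison with the main term $a_N X\log X$ gives $X=N^{1/2+\varepsilon}$.

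There is a second point you pass over: the lower bound $a_N\gg N^{-\varepsilon}$ for the residue coefficient is not immediate and does not transparently factor through $L(1,\chi_N)$. The paper devotes all of Section~\ref{sec:L-function-sum-asymptotic} (Theorem~\ref{thm:L-moment-asymptotic}) to establishing this via a separate elementary asymptotic analysis of $S(X;\chi_N)$ with a large $X$.
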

It is not entirely obvious what should be regarded as the trivial
bound in this situation. The most natural approach to non-vanishing
would be to prove a lower bound for the first moment $\sum_{d\asymp X}L({\textstyle \frac{1}{2}},\chi_{dN})$
for $X$ as small as possible in terms of $N$. A straightforward
argument produces a main term of size $X\log X$ and an error term
of size $O((NX)^{1/2+\varepsilon})$ which suggests the trivial bound
$N^{1+\varepsilon}$ for the first non-vanishing twist. This is perhaps
unexpectedly weak, since the same bound holds for degree 2 $L$-functions
$L({\textstyle \frac{1}{2}},f\times\chi_{d})$, where $f$ is an automorphic
form of level $N$. Nevertheless it is not completely obvious how
to improve this in the case of Dirichlet characters $\chi_{N}$. Here we follow a modified version of a method presented in \cite{hk}: Let $X$ be a large positive number, and $h(y)$ be a smooth
non-negative function with support on $[1,2]$. By Mellin inversion,
we have
\[
\int_{(2)}\tilde{h}(w)Z({\textstyle \frac{1}{2}},w;\chi_{N},\psi_{1})X^{w}dw\approx\sum_{\substack{d=1}
}^{\infty}L({\textstyle \frac{1}{2}},\chi_{dN})h(d/X),
\]
where $\tilde{h}$ denotes the Mellin transform of $h$, and $\psi_{1}$
denotes the trivial character. We move the contour to $\Re w=-\varepsilon$,
picking up a double pole at $w=1$. If we apply a symmetric functional equation (\ref{eq:functional-3}) and a bound resulting from use of the Lindel\"of principle on average (\ref{eq:convex-trivial-2}) to the resulting integral, then we have

\begin{equation}
\sum_{\substack{d=1}
}^{\infty}L({\textstyle \frac{1}{2}},\chi_{dN})h(d/X)\approx a_{N}X\log X+b_{N}X+O(N^{1/2+\varepsilon})\label{eq:nonvanishing-asymptotic-approx}
\end{equation}
for some coefficients $a_{N},b_{N}$. The idea now is to bound $a_{N}$
from below in terms of $N$, and choose $X$ so that the main term
is greater than the error term. Then it cannot be that $L({\textstyle \frac{1}{2}},\chi_{dN})$
vanishes for all $0<d<X$ on the left-hand side. We prove the bounds
$a_{N},b_{N}\gg N^{-\varepsilon}$ (cf. Theorem \ref{thm:L-moment-asymptotic}),
and thus we can choose $X=N^{1/2+\varepsilon}$. The power of the
asymptotic formula (\ref{eq:nonvanishing-asymptotic-approx}) is that
$a_{N},b_{N}$ can be bounded below by finding an asymptotic for $\sum_{d\asymp X}L({\textstyle \frac{1}{2}},\chi_{dN})$,
but now we can take $X$ to be very large.

Of course, in the previous discussion there are significant details
suppressed for brevity, which include the aforementioned correction
factors and the error term arising from truncation of the $d$-sum.
Most nontrivial, however, are the bounds for $a_{N},b_{N}$ given
by Theorem \ref{thm:L-moment-asymptotic}, requiring careful treatment.
The techniques used are similar to those in \cite{jutila},
in which an asymptotic formula for
$
\sum_{0<d\leq X}L^{k}({\textstyle \frac{1}{2}},\chi_{d})
$
is proved, where the sum is over fundamental discriminants.

In order to prove the subconvexity bound Theorem \ref{thm:subconvexity},
we follow techniques similar to those used in \cite{blomer}, but
we must deal with some new complications. As mentioned, we need special
correction factors in order to obtain functional equations, whereas
in the archimedean case, the correction factors are far simpler. These
correction factors appear due to the fact that the characters involved
will only be primitive when the summation variable is a fundamental
discriminant. Also, the introduction of character twists complicates
the functional equations considerably: a pair of twisting characters
$(\psi,\psi')$ will not be static under the variable transformations
of the functional equations.

We can iterate the functional equations to obtain one under the map
$(s,w)\mapsto(1-s,1-w)$. Using techniques similar to those in the
case of $L(s,\chi)$ (cf.\ \cite{IK04} Theorem 5.3), we obtain the
approximate functional equation
\[
Z({\textstyle \frac{1}{2}},{\textstyle \frac{1}{2}};\chi,\chi')\approx\sum_{d<X}\frac{L({\textstyle \frac{1}{2}},\chi_{d}\chi)\chi'(d)}{d^{1/2}}+\sum_{d\leq M^{2}N/X}\frac{L({\textstyle \frac{1}{2}},\chi_{d}\chi'\chi_{M})\chi\chi_{N}(d)}{d^{1/2}},
\]
(cf.\ Lemma \ref{lem:approx-functional}). We can further apply the
approximate functional equation for $L$-functions,
\[
L({\textstyle \frac{1}{2}},\chi_{d}\chi)\approx\sum_{n\leq(dq)^{1/2}}\frac{\chi(n)}{n^{1/2}}.
\]
We hence see that we can roughly express $Z({\textstyle \frac{1}{2}},{\textstyle \frac{1}{2}};\chi,\chi')$
as a sum of double finite sums of the form
\[
S(P,Q;\chi,\chi'):=\sum_{d\leq P}\sum_{n\leq Q}\frac{\chi_{d}(n)\chi(n)\chi'(d)}{d^{1/2}n^{1/2}}
\]
for various character pairs $(\chi,\chi')$. Finally, we apply Heath-Brown's
large sieve estimate (cf.\ Corollary \ref{cor:heath-brown-large-sieve})
to obtain the subconvexity result.

We note that it is possible to extend these results beyond the case
where $M$ and $N$ are prime to arbitrary numbers, but the coefficients
of the functional equations in Theorems \ref{thm:functional-1} and
\ref{thm:functional-2} would become considerably more complicated
due to the Euler factors involved. We thus only treat the case of
prime moduli here to simplify the presentation.

\medskip{}

\textbf{Notation. }The variable $\varepsilon$ will always denote
a sufficiently small positive number, not necessarily the same at
each occurrence, and the variable $A$ will denote a sufficiently
large positive number, not necessarily the same at each occurrence.
The numbers $M$ and $N$ will always denote natural numbers that
are either odd primes or unity, possibly equal. For a real function
$f$, we denote its Mellin transform by $\tilde{f}$. The trivial
character modulo unity will be denoted by $\psi_{1}$, and the primitive
character of conductor 4 shall be denoted $\psi_{-1}$. As for the
primitive characters modulo 8, we define $\psi_{2}$ as the character
that is unity at exactly 1 and 7, and we set $\psi_{-2}=\psi_{2}\psi_{-1}$.
If $\chi$ is a character, we use the notation $C_{\chi}$ to denote
its conductor.

\section{Preliminaries}

\subsection{Characters}

For a positive integer $d$, we define a character on $(\mathbb{Z}/4d\mathbb{Z})^{*}$
via the Jacobi symbol by 
\[
\chi_{d}(n)=\widetilde{\chi}_{n}(d)=\left(\frac{d}{n}\right).
\]
For odd positive integers $n$ and $d$, we have the following quadratic
reciprocity law for the Jacobi-Kronecker symbol (cf. \cite{mollin},
Theorem 4.2.1, page 197). 
\[
\chi_{d}(n)=\left(\frac{d}{n}\right)
=\left(\frac{n}{d}\right)
(-1)^{\frac{d-1}{2}\cdot\frac{n-1}{2}}
=\begin{cases}
\widetilde{\chi}_{d}(n), & d\equiv1\,(\text{mod 4)};\\
\widetilde{\chi}_{d}(-n)=\widetilde{\chi}_{d}(n)\psi_{-1}(n), & d\equiv3\,(\text{mod 4)}.
\end{cases}
\]

\subsection{$L$-function results}

Suppose that $\chi$ is a Dirichlet character. We define 
\begin{equation}
L^{(P)}(s,\chi)=L(s,\chi)\prod_{p\mid P}\left(1-\frac{\chi(p)}{p^{s}}\right).\label{eq:L-remove-primes}
\end{equation}
We define the odd sign indicator function of a Dirichlet character
$\chi$ by $\kappa=\kappa(\chi)=\frac{1}{2}(1-\chi(-1))$.

\subsection{$L$-functional bounds and approximate functional equation}

For a primitive character $\chi$ modulo $q$, using an absolute convergence argument
for $L(s,\chi)$ in a right half-plane and applying the functional
equation, we interpolate via the Phragm\'en-Lindelöf convexity principle
to obtain
\begin{equation}
L(s,\chi)\ll\begin{cases}
[q(1+\vert\Im s\vert)]^{1/2-\Re s}, & \Re s\leq-\varepsilon;\\
{}[q(1+\vert\Im s\vert)]^{(1-\Re s)/2+\varepsilon}, & -\varepsilon<\Re s<1+\varepsilon;\\
1, & \Re s\geq1+\varepsilon,
\end{cases}\label{eq:L-bounds}
\end{equation}
away from a possible pole at $s=1$ in the case where $\chi$ is trivial.
This bound is known as the convexity bound for Dirichlet $L$-functions.

We shall also need the so-called approximate functional equation for
Dirichlet $L$-functions (cf. \cite{IK04}, Theorem 5.3). Particularly,
if $\chi$ is a quadratic primitive character modulo odd $q$, $\psi$
is a character with conductor dividing 8, and $d_{0}$ is odd, squarefree
and coprime to $q$, then we have the weighted infinite sum
\begin{equation}
L({\textstyle \frac{1}{2}},\chi_{d_{0}}\chi\psi)=2\sum_{n=1}^{\infty}\frac{(\chi_{d_{0}}\chi\psi)(n)}{n^{1/2}}G_{\kappa}\left(\frac{n}{\sqrt{c_{0}d_{0}q}}\right),\label{eq:L-approx-functional}
\end{equation}
where
\begin{equation}
\begin{array}{cc}
\kappa=\kappa(\chi_{d_{0}}\chi\psi), & c_{0}=\begin{cases}
1, & d_{0}\equiv1\,(\text{mod }4),\psi=\psi_{1}\,\text{or}\, d_{0}\equiv3\,(\text{mod }4),\psi=\psi_{-1};\\
4, & d_{0}\equiv1\,(\text{mod }4),\psi=\psi_{-1}\,\text{or}\, d_{0}\equiv3\,(\text{mod }4),\psi=\psi_{1};\\
8, & \psi=\psi_{2}\text{ or }\psi_{-2},
\end{cases}\end{array}\label{eq:Lfunctional-kappa,delta0-1}
\end{equation}
and we have the weight function 
\begin{equation}
G_{\kappa}(\xi)=\frac{1}{2\pi i}\int_{(2)}\frac{\Gamma(\frac{1/2+s+\kappa}{2})}{\Gamma(\frac{1/2+\kappa}{2})}\xi{}^{-s}\frac{ds}{s},\label{eq:G-def}
\end{equation}
satisfying the bound
\begin{equation}
\label{eq:G-bound}
G_{\kappa}(\xi) \ll (1+\xi)^{-A}
\end{equation}
for arbitrary $A\geq0$ (cf.\ \cite{IK04}, Proposition 5.4).

We shall also make use of smooth weight functions.
\begin{defn}
\label{def:smooth-weight-function}We say that $w(x)$ is a smooth
weight function if it is a smooth non-negative real function supported
on $[1/4,5/4]$ and unity on $[1/2,1]$.
\end{defn}
The following bound can be shown via sufficiently many applications
of integration by parts. We have
\begin{equation}
\tilde{w}(z)\ll_{A,\Re z}(1+\vert z\vert)^{-A}\label{eq:w-bound}
\end{equation}
for $A\geq0$, where $\tilde{w}(z)$ is the Mellin transform of $w(x)$.

\subsection{Short double character sums and $L$-function moments}

We shall need the following adaptation of Theorem 2 from \cite{hb}
which includes a character twist.
\begin{thm}
\label{thm:HB-fouth-moment}Let $\psi$ be a primitive character modulo
$j$, and for a positive integer $Q$ define $S(Q)$ to be the set
of quadratic primitive Dirichlet characters of conductor at most $Q$.
We have
\[
\sum_{\chi\in S(Q)}\vert L(\sigma+it,\chi\psi)\vert^{4}\ll_{\varepsilon}\lbrace Q+(Qj(\vert t\vert+1))^{2-2\sigma}\rbrace\lbrace Qj(\vert t\vert+1)\rbrace^{\varepsilon}
\]
for any fixed $\sigma\in[1/2,1]$ and any $\varepsilon>0$.
\end{thm}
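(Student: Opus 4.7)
My plan is to adapt Heath-Brown's proof of Theorem 2 of \cite{hb} to accommodate the twist by $\psi$. The guiding observation is that for $\chi \in S(Q)$, the product $\chi\psi$ has conductor dividing $C_\chi \cdot j \leq Qj$, so the analytic conductor of $L(s, \chi\psi)$ at $s=\sigma+it$ is $\ll Qj(\vert t\vert+1)$, and this quantity should replace the analytic conductor $Q(\vert t\vert+1)$ in Heath-Brown's original argument.

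I would begin by applying the approximate functional equation (\ref{eq:L-approx-functional}) to $L(\sigma+it, \chi\psi)$, then take the squared modulus. Using the factorization $(\chi\psi)(n)\overline{(\chi\psi)(m)} = \chi(nm)\psi(n)\overline{\psi(m)}$ to isolate the $\chi$-dependence, one expresses
\[
|L(\sigma+it,\chi\psi)|^2 = \sum_{r \leq N}\alpha_r(\psi,t)\,\chi(r)\;+\;(\text{negligible tails}),
\]
where the $r$-sum is effectively supported on $N \ll (Qj(\vert t\vert+1))^{1+\varepsilon}$. Crucially, because $|\psi(n)\overline{\psi(m)}|\leq 1$, the coefficients $\alpha_r$ satisfy the same magnitude bound as in the untwisted case: $|\alpha_r|\ll r^{-\sigma}\tau(r)(Qj(\vert t\vert+1))^\varepsilon$, with the twist contributing only phases.

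Squaring in modulus and summing over $\chi\in S(Q)$, I would split the expansion into a diagonal piece (where $r=r'$, giving a contribution of size $Q\sum_r|\alpha_r|^2$, which is $\ll Q(Qj(\vert t\vert+1))^\varepsilon$ when $\sigma$ is close to $1$) and an off-diagonal piece. The off-diagonal piece is handled by Heath-Brown's quadratic large sieve (Corollary \ref{cor:heath-brown-large-sieve}); in conjunction with the $L$-function convexity bound (\ref{eq:L-bounds}) used to balance the cutoffs in the AFE, this yields an off-diagonal contribution of size $\ll (Qj(\vert t\vert+1))^{2-2\sigma+\varepsilon}$. Combining the two estimates gives the stated bound.

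The main obstacle is the careful bookkeeping around the twist $\psi$: verifying that the Gauss sum $\tau(\chi\psi)$ retains modulus $\sqrt{C_{\chi\psi}}$, that the root number $\varepsilon(\chi\psi)$ remains unimodular, and that no extra factors of $j$ enter the coefficient norms beyond those already absorbed into the analytic conductor. Since the large-sieve estimate is uniform in the coefficients, these verifications reduce to the standard multiplicativity of Gauss sums for (nearly) coprime moduli, so Heath-Brown's computations carry over with $Q$ replaced by $Qj$ in the length parameter throughout.
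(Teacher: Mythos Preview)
The paper does not supply its own proof here; the result is quoted as a direct adaptation of Theorem~2 of \cite{hb}, and your plan---rerun Heath-Brown's argument with the analytic conductor $Q(|t|+1)$ replaced by $Qj(|t|+1)$---is exactly the intended justification.  Your central observation, that $\psi$ contributes only unimodular phases to the coefficients while the quadratic large sieve is applied in the $\chi$-variable, is correct and is the whole point of the adaptation.

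Two points of precision.  First, the approximate functional equation~(\ref{eq:L-approx-functional}) is stated only at $s=\tfrac12$; you need the general form for $L(s,\chi\psi)^2$ at $s=\sigma+it$, as in \cite{hb}, and you should invoke Theorem~\ref{thm:HeathBrown-Theorem-1} rather than the normalised Corollary~\ref{cor:heath-brown-large-sieve}.  Second, your explanation of the exponent $2-2\sigma$ is not quite right.  Applying the large sieve to the \emph{main} sum of the approximate functional equation yields $(Q+N)\sum_{n\le N}|a_n|^2$ with $\sum|a_n|^2\ll N^\varepsilon$ for $\sigma\ge\tfrac12$, hence only $Q+N$ rather than $Q+N^{2-2\sigma}$.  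In Heath-Brown's argument the refined exponent enters through the dual sum (the gamma quotient contributes $\mathfrak q^{1-2\sigma}$ and the coefficient $\ell^2$-norm contributes $N^{2\sigma-1}$), together with a further device to tame the main sum for $\sigma>\tfrac12$; the diagonal/off-diagonal split you describe does not by itself produce this saving.  That said, the paper only uses the case $\sigma=\tfrac12$ (via Theorem~\ref{thm:heath-brown-estimate-fourth}), and for that case your sketch already gives the required bound $Q+Qj(|t|+1)$.  Finally, you correctly flag the issue of imprimitivity when $(C_\chi,j)>1$; since $j$ is fixed there are only $O(j^\varepsilon)$ such $\chi$, and these may be disposed of individually by the convexity bound~(\ref{eq:L-bounds}).
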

Removing even conductors, and using the fact that $2-2\sigma\leq1$,
we can restate this as follows.
\begin{thm}
\label{thm:heath-brown-estimate-fourth}If $\chi$ is a primitive
character with conductor $q$ and $X$ is a positive real number,
then 
\[
\sum_{\substack{d_{0}\leq X\\
d_{0}\text{ odd, squarefree}
}
}\vert L(s,\chi_{d_{0}}\chi)\vert^{4}\ll_{\varepsilon}(Xq\vert s\vert)^{1+\varepsilon},\ \sigma\geq{\textstyle \frac{1}{2}}
\]
for all $\varepsilon>0$.
\end{thm}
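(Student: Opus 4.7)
The plan is to deduce Theorem \ref{thm:heath-brown-estimate-fourth} as a direct specialization of Theorem \ref{thm:HB-fouth-moment}, re-indexing the sum over the character family by $d_0$ and collapsing the two terms on the right-hand side using the hypothesis $\sigma \geq 1/2$.

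First, I would identify the characters $\chi_{d_0}$ as lying in the family $S(\cdot)$ of Theorem \ref{thm:HB-fouth-moment}. For an odd squarefree positive integer $d_0$, the Jacobi--Kronecker symbol $\chi_{d_0}$ is a primitive quadratic Dirichlet character whose conductor equals $d_0$ when $d_0 \equiv 1 \pmod{4}$ and $4d_0$ when $d_0 \equiv 3 \pmod{4}$. In either case the conductor is at most $4X$ whenever $d_0 \leq X$, and distinct $d_0$ yield distinct characters, so the family $\{\chi_{d_0}\}$ injects into $S(4X)$.

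Second, I would apply Theorem \ref{thm:HB-fouth-moment} with $\psi = \chi$, $j = q$, $Q = 4X$, and $t = \Im s$, and bound positively to get
\[
\sum_{\substack{d_0 \leq X \\ d_0 \text{ odd, squarefree}}} \vert L(s, \chi_{d_0}\chi) \vert^4 \;\leq\; \sum_{\chi' \in S(4X)} \vert L(s, \chi'\chi) \vert^4 \;\ll_\varepsilon\; \bigl\{ 4X + (4Xq(\vert t \vert + 1))^{2-2\sigma} \bigr\}(4Xq(\vert t \vert + 1))^\varepsilon.
\]
Since $\sigma \geq 1/2$ forces $2 - 2\sigma \leq 1$, the second term inside the braces is dominated by $4Xq(\vert t \vert + 1)$, and both terms collapse into $(Xq(\vert t \vert + 1))^{1+\varepsilon}$ after absorbing numerical constants into the $\varepsilon$ power. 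For $\sigma$ ranging in the relevant critical strip $1/2 \leq \sigma \leq 1+\varepsilon$, we have $\vert t \vert + 1 \ll \vert s \vert$, so we may replace $\vert t \vert + 1$ by $\vert s \vert$ in the final estimate; for $\sigma \geq 1+\varepsilon$ the convexity bound \eqref{eq:L-bounds} gives $L \ll 1$ and the desired bound is trivial.

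I do not expect any substantive obstacle in this argument: the statement is a cosmetic repackaging of Theorem \ref{thm:HB-fouth-moment} tailored to the $d_0$-sums that will appear in the sequel. The only minor points are the conductor bookkeeping for the Kronecker symbol $\chi_{d_0}$ and the inequality $2 - 2\sigma \leq 1$ on the critical strip; the genuine analytic work is wrapped up entirely inside Heath-Brown's large sieve inequality, which is imported as a black box.
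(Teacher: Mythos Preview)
Your proposal is correct and matches the paper's approach exactly: the paper's entire justification is the one-line remark ``Removing even conductors, and using the fact that $2-2\sigma\leq1$, we can restate this as follows,'' and your argument is simply a careful unpacking of that sentence. If anything, you have supplied more detail than the paper (the conductor bookkeeping for $\chi_{d_0}$ and the handling of $|t|+1$ versus $|s|$), but the substance is identical.
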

An important ingredient for the proof of the subconvexity bound Theorem
\ref{thm:subconvexity} is a large sieve estimate for quadratic characters
due to Heath-Brown. In particular we state here Theorem 1 in \cite{hb}.
\begin{thm}[Heath-Brown's large sieve estimate]
\label{thm:HeathBrown-Theorem-1} Let $P$ and $Q$ be positive integers,
and let $(a_{n})$ be a sequence of complex numbers. Then
\[
\left.\sum_{m\leq P}\right.^{\ast}\left|\left.\sum_{n\leq Q}\right.^{\ast}a_{n}\left(\frac{n}{m}\right)\right|^{2}\ll_{\varepsilon}(PQ)^{\varepsilon}(P+Q)\left.\sum_{n\leq Q}\right.^{\ast}\vert a_{n}\vert^{2},
\]
for any $\varepsilon>0$, where $\left.\sum\right.^{\ast}$ denotes
that the sum is over odd squarefree numbers.
\end{thm}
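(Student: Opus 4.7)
The plan is to adapt the classical large sieve strategy, combining a duality reduction with quadratic reciprocity and Poisson summation --- the approach pioneered by Heath-Brown. By the bilinear duality principle, the inequality is equivalent to its transpose
\[
\left.\sum_{n \leq Q}\right.^{\ast} \left| \left.\sum_{m \leq P}\right.^{\ast} b_m \left(\frac{n}{m}\right) \right|^2 \ll_\varepsilon (PQ)^\varepsilon(P+Q) \left.\sum_{m \leq P}\right.^{\ast} |b_m|^2,
\]
so by symmetry one may assume $Q \leq P$. Squaring out the left-hand side of the original bound and using the multiplicativity of the Jacobi symbol in the numerator yields the central quantity
\[
\left.\sum_{n_1, n_2 \leq Q}\right.^{\ast} a_{n_1} \overline{a_{n_2}} \left.\sum_{m \leq P}\right.^{\ast} \left(\frac{n_1 n_2}{m}\right).
\]

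I would first isolate the diagonal contribution, corresponding to pairs $(n_1, n_2)$ with $n_1 n_2$ a perfect square. Since $n_1, n_2$ are squarefree this forces $n_1 = n_2$, and the contribution is trivially bounded by $P \sum^{\ast} |a_n|^2$, supplying the $P$ portion of the target. For the off-diagonal pairs, write $n_1 n_2 = D k^2$ with $D$ squarefree and $D \neq 1$; the inner sum becomes $\sum_{m \leq P}^{\ast} \bigl(\tfrac{D}{m}\bigr)$, a non-principal real character sum of squarefree modulus that has to be estimated nontrivially.

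To handle this, I would introduce a smooth majorant $\Phi(m/P)$ for the indicator of $[0, P]$ and detect the squarefreeness of $m$ by $\mu^2(m) = \sum_{\ell^2 \mid m} \mu(\ell)$ at the cost of a harmless $(PQ)^\varepsilon$ factor. The remaining smooth sum is then transformed by quadratic reciprocity, converting $\bigl(\tfrac{D}{m}\bigr)$ into $\bigl(\tfrac{m}{|D|}\bigr)$ with a sign depending on $D \bmod 4$. Splitting into arithmetic progressions modulo $8|D|$ and applying Poisson summation modulo $|D|$ turns the $m$-sum into a Gauss sum of size $\sqrt{|D|}$ times a rapidly decaying transform $\widetilde{\Phi}$. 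The resulting saving of order $\sqrt{|D|}/P$ per off-diagonal term, combined with Cauchy-Schwarz over the $(n_1, n_2)$-pairs with $|D| \leq n_1 n_2 \leq Q^2$, contributes the required $Q^{1+\varepsilon} \sum^{\ast} |a_n|^2$ to the bound.

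The main obstacle is the congruence-class bookkeeping: the sign produced by quadratic reciprocity and the modulus-$8$ complications arising from the squarefreeness sieve must be tracked uniformly in $(n_1, n_2)$ so as not to incur a loss worse than $(PQ)^\varepsilon$. A careful treatment of the square divisor $k^2$ extracted from $n_1 n_2$ is equally important: the factor $k$ varies across pairs and must not leak an extra power of $Q$ into the estimate. It is precisely this combinatorial uniformity that forms the technical heart of Heath-Brown's argument, preventing the argument from collapsing to a weaker bound such as $P + Q^2$.
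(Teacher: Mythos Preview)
The paper does not prove this theorem; it is quoted verbatim as Theorem~1 of \cite{hb} and used as a black box, so there is no ``paper's own proof'' to compare against.

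Your sketch, however, has a genuine gap at the decisive step. After squaring out and applying Poisson to the smoothed inner $m$-sum, the bound one obtains for a non-principal $\bigl(\tfrac{D}{\cdot}\bigr)$-sum of length $P$ is of order $D^{1/2}$ (up to $(PQ)^\varepsilon$). Feeding this back into the off-diagonal and applying Cauchy--Schwarz or AM--GM over $n_1,n_2\le Q$ gives at best
\[
\sum_{\substack{n_1,n_2\le Q\\ n_1\ne n_2}}|a_{n_1}a_{n_2}|\,(n_1n_2)^{1/2}\ \ll\ Q^{2}\sum_{n\le Q}|a_n|^{2},
\]
which is precisely the $P+Q^{2}$ bound you yourself flag as inadequate. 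Nothing in your outline explains how this becomes $Q$; the phrase ``combinatorial uniformity'' names the difficulty without resolving it, and the claimed ``saving of order $\sqrt{|D|}/P$'' is not what Poisson actually delivers here.

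What Heath-Brown's argument really uses is a \emph{recursive} inequality. After Poisson one does not bound the dual $h$-sum trivially; instead one recognises that the transformed expression is again a bilinear form of the same shape, now in the pair $(h,n)$ with $h\ll Q^{2}/P$. Writing $B(P,Q)$ for the optimal constant, this produces an inequality of the type
\[
B(P,Q)\ \ll_\varepsilon\ (PQ)^{\varepsilon}\bigl(P+B(Q^{2}/P,\,Q)\bigr),
\]
and since $Q\le P$ forces $Q^{2}/P\le Q$, iterating this together with the duality $B(P,Q)=B(Q,P)$ drives the parameters down and terminates at $B(P,Q)\ll(PQ)^{\varepsilon}(P+Q)$. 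Your proposal has the right raw ingredients---duality, reciprocity, Poisson, the squarefree sieve---but omits this self-referential step, and without it the method cannot go below $P+Q^{2}$.
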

Due to the nature of the double Dirichlet series we shall construct,
we shall need the following normalization of this result.
\begin{cor}
\label{cor:heath-brown-large-sieve}If $(a_{m}),\ (b_{n})$ are sequences
of complex numbers satisfying the bound $a_{m},\ b_{m}\ll m^{-1/2+\varepsilon}$
for some $\varepsilon>0$, and $P$ and $Q$ are positive real numbers,
then
\[
\sum_{\substack{m\leq P\\
m\text{ odd}
}
}\,\sum_{\substack{n\leq Q\\
n\text{ odd}
}
}a_{m}b_{n}\left(\frac{n_{0}}{m}\right)\ll_{\varepsilon}(PQ)^{\varepsilon}(P+Q)^{1/2+\varepsilon},
\]
where we have the composition $n=n_{0}n_{1}^{2}$ with $n_{0}$ squarefree,
uniformly in $P$ and $Q$, for any $\varepsilon>0$.

\end{cor}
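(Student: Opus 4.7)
\emph{Proof proposal.} The plan is to reduce the corollary to Heath-Brown's large sieve (Theorem \ref{thm:HeathBrown-Theorem-1}), whose hypotheses require both summation variables to be odd squarefree, while our $m, n$ are only odd. Decompose $m = m_0 m_1^2$ and $n = n_0 n_1^2$ with $m_0, n_0$ odd squarefree, and use multiplicativity of the Jacobi symbol in its denominator to write
\[
\left(\frac{n_0}{m}\right) = \left(\frac{n_0}{m_0}\right)\left(\frac{n_0}{m_1}\right)^2,
\]
noting that the last factor equals $1$ if $\gcd(n_0, m_1) = 1$ and vanishes otherwise. Fixing $m_1 \leq P^{1/2}$ and $n_1 \leq Q^{1/2}$, the problem reduces to a sum over odd squarefree $m_0 \leq P/m_1^2$ and $n_0 \leq Q/n_1^2$ subject to the coprimality condition $(n_0, m_1) = 1$.

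For each fixed pair $(m_1, n_1)$, apply Cauchy--Schwarz in the $m_0$-variable to separate the Jacobi symbol from the $a$-coefficients. The $L^2$-bound for $a$ follows from the hypothesis $a_m \ll m^{-1/2 + \varepsilon}$: indeed $\sum_{m_0 \leq P/m_1^2}|a_{m_0 m_1^2}|^2 \ll P^\varepsilon m_1^{-2}$ by a direct computation using $\sum_{k \leq X} k^{-1+2\varepsilon} \ll X^{2\varepsilon}$. The remaining factor is a square of a character sum of the form appearing in Theorem \ref{thm:HeathBrown-Theorem-1}, with coefficients $c_{n_0} = b_{n_0 n_1^2}\cdot \mathbf{1}_{(n_0, m_1) = 1}$, which yields a bound $\ll_\varepsilon (PQ)^\varepsilon (P/m_1^2 + Q/n_1^2)\sum_{n_0}|b_{n_0 n_1^2}|^2 \ll (PQ)^\varepsilon (P/m_1^2 + Q/n_1^2) n_1^{-2}$.

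Taking square roots and using $(X+Y)^{1/2} \leq X^{1/2} + Y^{1/2}$, each fixed-$(m_1, n_1)$ contribution is $\ll (PQ)^\varepsilon m_1^{-1} n_1^{-1}(P^{1/2}/m_1 + Q^{1/2}/n_1)$. Summing over $m_1 \leq P^{1/2}$ and $n_1 \leq Q^{1/2}$ pairs the convergent sums $\sum m_i^{-2}$ against the truncated harmonic sums $\sum_{m_j \leq P^{1/2}} m_j^{-1} \ll \log P$, giving
\[
\ll (PQ)^\varepsilon\bigl(P^{1/2} \log Q + Q^{1/2} \log P\bigr) \ll (PQ)^\varepsilon (P+Q)^{1/2+\varepsilon},
\]
which is the asserted bound.

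The main obstacle is essentially bookkeeping: correctly handling the factorization of the Jacobi symbol across the squared and squarefree parts of $m$, tracking the induced coprimality indicator, and arranging the Cauchy--Schwarz so that after applying Theorem \ref{thm:HeathBrown-Theorem-1} one of the outer sums in $m_1, n_1$ is always the absolutely convergent $\sum m_1^{-2}$ (or $\sum n_1^{-2}$), leaving only logarithmic losses absorbed by the $\varepsilon$.
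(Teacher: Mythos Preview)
Your argument is correct and is precisely the standard derivation of this corollary from Heath-Brown's large sieve; the paper itself states the corollary without proof, so there is nothing further to compare. The decomposition $m=m_0m_1^2$, $n=n_0n_1^2$, the reduction of $\left(\frac{n_0}{m_1}\right)^2$ to a coprimality indicator, Cauchy--Schwarz in $m_0$, and the bookkeeping of the $m_1,n_1$ sums are all handled correctly.
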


\subsection{Gamma identities}

We shall have use for the identity 
\begin{equation}
\frac{\Gamma(\frac{2-z}{2})}{\Gamma(\frac{z+1}{2})}=\frac{\Gamma(\frac{1-z}{2})}{\Gamma(\frac{z}{2})}\cot\left(\frac{\pi z}{2}\right),\ z\in\mathbb{C}.\label{eq:cot-identity}
\end{equation}
By Stirling's formula, in particular (5.113) from \cite{IK04},
for $s\in\mathbb{C}$ with fixed real part and nonzero imaginary part,
we have 
\begin{equation}
\frac{\Gamma(\frac{1-s}{2})}{\Gamma(\frac{s}{2})}\ll_{\Re s}(1+\vert s\vert)^{1/2-\Re s},\label{eq:stirling}
\end{equation}
away from the poles at the odd positive integers. We also have the
cotangent bound,
\begin{equation}
\cot(x+iy)=-i\,\text{sign}(y)+O(e^{-2\vert y\vert}),\ \min_{k\in\mathbb{Z}}\vert z-\pi k\vert\geq1/10.\label{eq:cot-bound}
\end{equation}

\section{Structure and Analytic Properties}

\subsection{Switch of summation formula}

The object we would like to study is
\[
Z_{0}(s,w;\chi,\chi')=\sum_{d\geq1}\frac{L(s,\chi_{d}\chi)\chi'(d)}{d^{w}},
\]
where $\chi$ and $\chi'$ are quadratic characters with moduli $N$
and $M$ respectively. However, in order to obtain functional equations,
we will need to augment this by some correction factors. The exact
form of these correction factors (in a more general setting) was determined
in \cite{bfh}. Applying this theory in our case, we have the following
theorem, which we note holds for general Dirichlet characters $\chi$
and $\chi'$, though in our case we are interested in the special
case of quadratic twists.
\begin{thm}
\label{thm:correction-factors}Let $m$ and $d$ be positive integers
with $(md,2MN)=1$, and write $d=d_{0}d_{1}^{2}$ and $m=m_{0}m_{1}^{2}$,
with $d_{0}$, $m_{0}$ squarefree and $d_{1}$, $m_{1}$ positive,
and let $\chi$ and $\chi'$ be characters modulo $8\text{lcm}(M,N)$.
Then the Dirichlet polynomials
\[
P_{d_{0},d_{1}}^{(\chi)}(s)=\prod_{p^{\alpha}\Vert d_{1}}\left[\sum_{n=0}^{\alpha}\chi(p^{2n})p^{n-2ns}-\sum_{n=0}^{\alpha-1}(\chi_{d_{0}}\chi)(p^{2n+1})p^{n-(2n+1)s}\right],
\]
\[
Q_{m_{0},m_{1}}^{(\chi')}(w)=\prod_{p^{\beta}\Vert m_{1}}\left[\sum_{n=0}^{\beta}\chi'(p^{2n})p^{n-2nw}-\sum_{n=0}^{\beta}(\tilde{\chi}_{m_{0}}\chi')(p^{2n+1})p^{n-(2n+1)w}\right]
\]
satisfy the functional equations
\begin{equation}
\label{eq:Building-block-reflective}
\begin{array}{ccc}
P_{d_{0},d_{1}}^{(\chi)}(s)=d_{1}^{1-2s}\chi(d_{1}^{2})P_{d_{0},d_{1}}^{(\bar{\chi})}(1-s)
&
\text{and}
&
Q_{m_{0},m_{1}}^{(\chi')}(w)=m_{1}^{1-2w}\chi'(m_{1}^{2})Q_{m_{0},m_{1}}^{(\bar{\chi}')}(1-w),
\end{array}
\end{equation}
and the interchange of summation formula
\begin{equation}
\sum_{(d,2MN)=1}\frac{L^{(2MN)}(s,\chi_{d_{0}}\chi)\chi'(d)P_{d_{0},d_{1}}^{(\chi)}(s)}{d^{w}}=\sum_{(m,2MN)=1}\frac{L^{(2MN)}(w,\tilde{\chi}_{m_{0}}\chi')\chi(m)Q_{m_{0},m_{1}}^{(\chi')}(w)}{m^{s}},\label{eq:Building-block-sum-switch}
\end{equation}
for $\Re s,\Re w>1$.\end{thm}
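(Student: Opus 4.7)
The theorem has two parts: the functional equations \eqref{eq:Building-block-reflective} for the local correction polynomials, and the interchange of summation formula \eqref{eq:Building-block-sum-switch}. I would treat them in this order.

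For the functional equations, the polynomials $P_{d_0,d_1}^{(\chi)}$ and $Q_{m_0,m_1}^{(\chi')}$ are defined as products over the prime divisors of $d_1$ (respectively $m_1$), so it suffices to prove each identity locally at each such prime. Fix $p$ with $p^\alpha\|d_1$ and set $u=p^{-s}$, $v=p^{1-2s}$. Since $\chi_{d_0}$ is quadratic, $(\chi_{d_0}\chi)(p^{2n})=\chi(p)^{2n}$, and the local factor of $P$ becomes an explicit polynomial in $u$ and $v$. Under $s\mapsto 1-s$ we have $v\mapsto v^{-1}$ and $\chi\mapsto\bar\chi$, and the geometric-series identity $\sum_{n=0}^{\alpha}v^{-n}=v^{-\alpha}\sum_{n=0}^{\alpha}v^{n}$, together with the analogous identity up to $\alpha-1$ applied to the second sum, produces precisely the scaling factor $p^{\alpha(1-2s)}\chi(p^{2\alpha})$ after collecting terms. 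Taking the product over all $p\mid d_1$ yields the claimed functional equation for $P$; the argument for $Q$ is identical.

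For the interchange formula, I would expand $L^{(2MN)}(s,\chi_{d_0}\chi)=\sum_{(n,2MN)=1}\chi_{d_0}(n)\chi(n)n^{-s}$ on the left, interchange the order of summation, and apply the quadratic reciprocity law from the Characters subsection to rewrite $\chi_{d_0}(n)=\left(\frac{d_0}{n}\right)$ in terms of $\widetilde{\chi}_{n_0}(d_0)$, picking up a sign that depends on the class of $d_0$ modulo $4$ (and which is absorbed by the factor $\psi_{-1}$ where needed). This converts the double sum into one of the shape $\sum_n\chi(n)n^{-s}\bigl(\sum_d\chi'(d)\widetilde{\chi}_{n_0}(d_0)P_{d_0,d_1}^{(\chi)}(s)d^{-w}\bigr)$, and the essential claim is that the inner $d$-sum equals $L^{(2MN)}(w,\widetilde{\chi}_{n_0}\chi')Q_{n_0,n_1}^{(\chi')}(w)$, so that re-indexing by $m=n$ recovers the right-hand side of \eqref{eq:Building-block-sum-switch}.

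The main obstacle is this last identity, which is a purely multiplicative (Euler-product) statement: at each prime $p$ coprime to $2MN$, the local factor of the $d$-sum on the left — including the contribution of $P_{d_0,d_1}^{(\chi)}(s)$ at primes $p\mid d_1$ — must coincide with the local factor of $L^{(2MN)}(w,\widetilde{\chi}_{n_0}\chi')Q_{n_0,n_1}^{(\chi')}(w)$ on the right. This local matching is precisely the defining property of the correction polynomials, and the existence and uniqueness of such $P$ and $Q$ for multiple Dirichlet series of $GL(r)$ type with $r\le 3$ is the content of \cite{bfh} and \cite{DGH03}. Granting that general theory, the proof reduces to specializing their recipe to the present quadratic-twist setting and performing a direct case-check at each prime $p$ — distinguishing whether $p$ divides $d_0$, $d_1$, $n_0$, or $n_1$ — to confirm that the two local factors agree. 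The functional equations for $P$ and $Q$ proved in the first step also serve as a consistency check, reflecting the symmetry $(s,w)\leftrightarrow(w,s)$ inherent in the interchange identity.
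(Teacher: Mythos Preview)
Your proposal is correct and aligns with the paper's own treatment: the paper suppresses the proof entirely as ``a straightforward but lengthy computation'' and directs the reader to \cite{bfh}, so your sketch---verifying the functional equations locally via the reindexing $n\mapsto\alpha-n$ (and $n\mapsto\alpha-1-n$) in the two geometric-type sums, and reducing the interchange identity to the prime-by-prime Euler-factor matching established in \cite{bfh}---is exactly the intended route and in fact gives more detail than the paper does. One minor remark: your closing comment that the functional equations for $P$ and $Q$ reflect the $(s,w)\leftrightarrow(w,s)$ symmetry of the interchange identity is not quite right, since those equations are of $s\mapsto 1-s$ type rather than $s\leftrightarrow w$ type; but this is only a side remark and does not affect the argument.
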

\begin{proof}
This is a straightforward but lengthy computation which we supress
for brevity. We direct the reader to \cite{bfh} for details.
\end{proof}
In light of the above theorem, we now define our double Dirichlet
series as follows: Let $\chi$ and $\chi'$ be characters modulo $8\text{lcm}(M,N)$.
Then define
\begin{equation}
Z(s,w;\chi,\chi')=\sum_{(d,2MN)=1}\frac{L^{(2MN)}(s,\chi_{d_{0}}\chi)\chi'(d_{0})P_{d_{0},d_{1}}^{(\chi)}(s)}{d^{w}}.\label{eq:Z-definition}
\end{equation}

We note at this point that it is easily shown that, for $d=d_{0}d_{1}^{2}$
and $m=m_{0}m_{1}^{2}$ with $\Re s,\Re w\geq{\textstyle \frac{1}{2}}$,
we have the bounds $\vert P_{d_{0},d_{1}}^{(\chi)}(s)\vert\ll d_{1}^{\varepsilon}$
and $\vert Q_{m_{0},m_{1}}^{(\chi')}(w)\vert\ll m_{1}^{\varepsilon}$.
Applying the functional equations (\ref{eq:Building-block-reflective}), we therefore have 
\begin{equation}
\begin{array}{cccc}
\vert P_{d_{0},d_{1}}^{(\chi)}(s)\vert\ll\begin{cases}
d_{1}^{1-2\Re s+\varepsilon}, & \Re s<{\textstyle \frac{1}{2}};\\
d_{1}^{\varepsilon}, & \Re s\geq{\textstyle \frac{1}{2}},
\end{cases} &  &  & \vert Q_{m_{0},m_{1}}^{(\chi')}(w)\vert\ll\begin{cases}
m_{1}^{1-2\Re w+\varepsilon}, & \Re w<{\textstyle \frac{1}{2}};\\
m_{1}^{\varepsilon}, & \Re w\geq{\textstyle \frac{1}{2}}.
\end{cases}\end{array}\label{eq:P,Q-bounds}
\end{equation}

\subsection{Functional equations}

We recall that $M$ and $N$ are odd prime numbers or unity, possibly
equal. From this point on in the paper, we shall use the following
notation: Let $\chi$ and $\chi'$ be quadratic primitive characters
of squarefree conductors $k$ and $j$ respectively, where $j,k\mid\text{lcm}(M,N)$,
and let $\psi,\ \psi'$ be primitive characters with conductors dividing
8. We first derive the following expansion of the regions of absolute
convergence of the key series involved.
\begin{lem}
\label{lem:Z-initial-continuation}We have the following two series
representations of $Z(s,w;\chi\psi,\chi'\psi')$. We have
\[
Z(s,w;\chi\psi,\chi'\psi')=\sum_{(d,2MN)=1}\frac{L^{(2MN)}(s,\chi_{d_{0}}\chi\psi)\chi'\psi'(d)P_{d_{0},d_{1}}^{(\chi\psi)}(s)}{d^{w}}
\]
which is absolutely convergent on the set
\[
R_{1}^{(1)}:=\{\Re s\leq0,\Re w+\Re s>3/2\}\cup\{0<\Re s\leq1,\Re s/2+\Re w>3/2\}\cup\{\Re s,\Re w>1\},
\]
except for a possible polar line $\{s=1\}$, and 
\[
Z(s,w;\chi\psi,\chi'\psi')=\sum_{(m,2MN)=1}\frac{L^{(2MN)}(w,\tilde{\chi}_{m_{0}}\chi'\psi')\chi\psi(m)Q_{m_{0},m_{1}}^{(\chi'\psi')}(w)}{m^{s}}
\]
which is absolutely convergent on the set
\[
R_{1}^{(2)}:=\{\Re w\leq0,\Re s+\Re w>3/2\}\cup\{0<\Re w\leq1,\Re w/2+\Re s>3/2\}\cup\{\Re s,\Re w>1\},
\]
except for a possible polar line $\{w=1\}$.\end{lem}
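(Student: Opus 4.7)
The plan is to verify each of the two series representations by a direct absolute-convergence estimate, using the convexity bound (\ref{eq:L-bounds}) for the $L$-factor together with the bound (\ref{eq:P,Q-bounds}) for the correction polynomial; the identity between the two representations is then supplied by the switch-of-summation formula (\ref{eq:Building-block-sum-switch}) of Theorem \ref{thm:correction-factors}.

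For the first series, I would split $d = d_0 d_1^2$ with $d_0$ squarefree. Since the character $\chi_{d_0}\chi\psi$ has conductor at most a fixed multiple of $d_0$ (the conductors of $\chi$ and $\psi$ being bounded), the convexity bound (\ref{eq:L-bounds}) yields $L^{(2MN)}(s,\chi_{d_0}\chi\psi) \ll d_0^{\alpha(\Re s)+\varepsilon}$, where
\[
\alpha(\sigma) = \begin{cases} 1/2 - \sigma, & \sigma \leq -\varepsilon; \\ (1-\sigma)/2, & -\varepsilon < \sigma < 1+\varepsilon; \\ 0, & \sigma \geq 1+\varepsilon. \end{cases}
\]
Writing $\beta(\sigma) = \max(0, 1-2\sigma)$ for the bound on the correction polynomial from (\ref{eq:P,Q-bounds}), the general summand is majorized by $d_0^{\alpha(\Re s)-\Re w+\varepsilon}\, d_1^{\beta(\Re s)-2\Re w+\varepsilon}$, so the double sum converges absolutely provided both $\Re w > 1 + \alpha(\Re s)$ and $2\Re w > 1 + \beta(\Re s)$. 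A routine case-by-case check (distinguishing $\Re s \le 0$, $0 < \Re s \le 1$, and $\Re s > 1$) shows that the first inequality always dominates and translates to exactly the three pieces defining $R_1^{(1)}$.

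For the second representation, I apply the switch-of-summation formula (\ref{eq:Building-block-sum-switch}) to the twisted characters $\chi\psi$ and $\chi'\psi'$, which are characters modulo $8\,\text{lcm}(M,N)$ as required by Theorem \ref{thm:correction-factors}; this gives the equality of the two series on the common region $\{\Re s, \Re w > 1\}$. An entirely symmetric convergence analysis, with the roles of $(s,d_0,d_1,P)$ and $(w,m_0,m_1,Q)$ interchanged, delivers absolute convergence of the second series on $R_1^{(2)}$, and the identity theorem in two complex variables extends the equality between the representations to the full common domain.

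Finally, the possible polar lines at $s=1$ and $w=1$ come from those terms in which the character inside the $L$-factor becomes principal. In the first series, $\chi_{d_0}\chi\psi$ is principal for at most one value of $d_0$ (namely the one, if any, for which $\chi_{d_0}$ coincides with $\chi\psi$ as a Kronecker symbol), so that term contributes a simple pole in $s$ at $s=1$ whose residue, once the $d_1$-sum is performed, is analytic in $w$ on the relevant domain; the analogous statement at $w=1$ holds for the second series. The only delicate point in the argument is the case bookkeeping for the exponents $\alpha$ and $\beta$ and the verification that the $\alpha$-constraint is always the binding one, but no substantial obstacle arises.
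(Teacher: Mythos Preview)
Your proposal is correct and follows precisely the approach the paper itself indicates: apply the $L$-function convexity bound (\ref{eq:L-bounds}) and the correction-polynomial bound (\ref{eq:P,Q-bounds}) to the definition (\ref{eq:Z-definition}), together with the switch-of-summation formula (\ref{eq:Building-block-sum-switch}) for the second representation. The paper compresses all of this into a single sentence, so your write-up simply unpacks the intended argument; your case-by-case verification that the $\alpha$-constraint dominates the $\beta$-constraint is exactly the routine check the paper suppresses.
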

\begin{proof}
This follows from applying the bounds for the Dirichlet $L$-function (\ref{eq:L-bounds}) and the bounds for the correction polynomials (\ref{eq:P,Q-bounds})
to the definition (\ref{eq:Z-definition}).
\end{proof}
We now proceed with derivation of functional equations for $Z$. Due
to the summation switch formula (\ref{eq:Building-block-sum-switch}),
we have
\[
Z(s,w;\chi\psi,\chi'\psi')=\sum_{(m,2MN)=1}\frac{L^{(2MN)}(w,\tilde{\chi}_{m_{0}}\chi'\psi')(\chi\psi)(m)Q_{m_{0},m_{1}}^{(\chi'\psi')}(w)}{m^{s}}.
\]
We can further apply the functional equation for the $Q$ factor (\ref{eq:Building-block-reflective})
to obtain
\begin{equation}
Z(s,w;\chi\psi,\chi'\psi')=\sum_{(m,2MN)=1}\frac{L^{(2MN)}(w,\tilde{\chi}_{m_{0}}\chi'\psi')(\chi\psi)(m)\chi'(m_{1}^{2})Q_{m_{0},m_{1}}^{(\chi'\psi')}(1-w)}{m^{s}m_{1}^{2(w-1/2)}},\label{eq:Z-transformed-first-functional}
\end{equation}
which holds for $(s,w)\in R_{1}^{(2)}$. The next step is to apply
the functional equation for Dirichlet $L$-functions in order to change
the $w$ in the $L$-function to $1-w$. This would allow us to switch
summation again to obtain $Z$ in its original form, but with a change
in variables.

We shall define the following Euler product function: For a character
$\chi^{\star}$ and a positive integer $P$, we define
\begin{equation}
K_{P}(w;\chi^{\star})=\prod_{p\mid P}\left(1-\frac{\chi^{\star}(p)}{p^{1-w}}\right)^{-1}\left(1-\frac{\chi^{\star}(p)}{p^{w}}\right).\label{eq:K-definition}
\end{equation}
Applying the Dirichlet functional equation along with (\ref{eq:L-remove-primes})
and the above, we now have
\begin{multline}
L^{(2MN)}(w,\tilde{\chi}_{m_{0}}\chi'\psi')=
\pi^{w-\frac{1}{2}}\frac{\Gamma\left(\frac{1-w+\hat{\kappa}'}{2}\right)}{\Gamma\left(\frac{w+\hat{\kappa}'}{2}\right)}K_{2MN}(w;\tilde{\chi}_{m_{0}}\chi'\psi')(C_{\psi'}jm_{0})^{\frac{1}{2}-w}L^{(2MN)}(1-w,\tilde{\chi}_{m_{0}}\chi'\psi'),\label{eq:L-functional-first-functional}
\end{multline}
where $\hat{\kappa}'=\kappa(\tilde{\chi}_{m_{0}}\chi'\psi')$.

We need to break down some of these parts for further manipulation.
Recalling that $(m_{0},2MN)=1$, if $p$ is prime and does not divide
$m_{0}$, we have
\begin{equation}
K_{p}(w;\tilde{\chi}_{m_{0}}\chi^{\star})=\frac{\chi^{\star}(p^{2})p-p^{2}+\chi_{p}(m_{0})\chi^{\star}(p)(p^{2-w}-p^{1+w})}{\chi^{\star}(p^{2})p^{2w}-p^{2}}.\label{eq:K-identity}
\end{equation}
If for $P\in\mathbb{N}$ and a Dirichlet character $\chi^{\star}$
we set

\begin{equation}
\begin{array}{ccc}
F_{P}^{(\chi^{\star})}(w)=\begin{cases}
1, & P=1;\\
\frac{\chi^{\star}(P^{2})P-P^{2}}{\chi^{\star}(P^{2})P^{2w}-P^{2}}, & \text{else},
\end{cases} &  & G_{P}^{(\chi^{\star})}(w)=\begin{cases}
0, & P=1;\\
\frac{\chi^{\star}(P)(P^{2-w}-P^{1+w})}{\chi^{\star}(P^{2})P^{2w}-P^{2}}, & \text{else},
\end{cases}\end{array}\label{eq:F,G-coeff-def}
\end{equation}

then from (\ref{eq:K-identity}) and the definition (\ref{eq:K-definition})
we have the identity
\begin{equation}
K_{p}(w;\tilde{\chi}_{m_{0}}\chi^{\star})=F_{p}^{(\chi^{\star})}(w)+\chi_{p}(m_{0})G_{p}^{(\chi^{\star})}(w),\label{eq:K-identity-2}
\end{equation}
which holds for prime $p$ not dividing $m_{0}$, or $p=1$. Noting
that $K_{P}(w;\chi^{\star})$ is multiplicative in $P$, and using
(\ref{eq:K-identity-2}) above, we now have the useful expression
\begin{multline}
K_{MN}(w;\tilde{\chi}_{m_{0}}\chi'\psi')=F_{M}^{(\chi'\psi')}\cdot F_{N}^{(\chi'\psi')}(w)+\chi_{M}(m_{0})F_{N}^{(\chi'\psi')}\cdot G_{M}^{(\chi'\psi')}(w)\\
+\chi_{N}(m_{0})F_{M}^{(\chi'\psi')}\cdot G_{N}^{(\chi'\psi')}(w)+\chi_{MN}(m_{0})G_{M}^{(\chi'\psi')}\cdot G_{N}^{(\chi'\psi')}(w).\label{eq:K-identity-3}
\end{multline}
We note that this holds true even if $M=N$. Indeed, in this case,
by the definition (\ref{eq:K-definition}), we have $K_{MN}(w;\tilde{\chi}_{m_{0}}\chi'\psi')=1$.
This is consistent with (\ref{eq:K-identity-3}), since according
to definition (\ref{eq:F,G-coeff-def}) we have $G_{N}^{(\chi'\psi')}(w)=0$
and $F_{N}^{(\chi'\psi')}(w)=1$.

Next, we see from (\ref{eq:cot-identity}) that
\begin{equation}
\frac{\Gamma\left(\frac{1-w+\hat{\kappa}'}{2}\right)}{\Gamma\left(\frac{w+\hat{\kappa}'}{2}\right)}=\frac{\Gamma\left(\frac{1-w}{2}\right)}{\Gamma\left(\frac{w}{2}\right)}\cot\left(\frac{\pi w}{2}\right)^{\hat{\kappa}'}.\label{eq:Gamma-quotient-factored}
\end{equation}

We shall find it useful to remove the dependency of $\hat{\kappa}'$
on $m_{0}$, or rather, exploit that the dependency is only on its
residue modulo 4. Hence, define $\kappa'=\kappa(\chi'\psi')$. Now
suppose that $f$ is a function of $\hat{\kappa}'$. By sieving out
by congruence classes modulo 4, we see that
\[
f(\hat{\kappa}')=\frac{1}{2}(1+\psi_{-1}(m_{0}))f(\kappa')+\frac{1}{2}(1-\psi_{-1}(m_{0}))f(1-\kappa').
\]
Hence we have
\begin{equation}
\cot\left(\frac{\pi w}{2}\right)^{\hat{\kappa}'}=\frac{1}{2}(1+\psi_{-1}(m_{0}))\cot\left(\frac{\pi w}{2}\right)^{\kappa'}+\frac{1}{2}(1-\psi_{-1}(m_{0}))\cot\left(\frac{\pi w}{2}\right)^{(1-\kappa')}.\label{eq:cot-sieved}
\end{equation}
For brevity, for a character $\chi^{\star}$, we define
\[
S(s,w;m,\chi^{\star}):=\frac{L^{(2MN)}(1-w,\tilde{\chi}_{m_{0}}\chi'\psi')\chi^{\star}(m)Q_{m_{0},m_{1}}^{(\chi'\psi')}(1-w)}{m^{s+w-1/2}}.
\]
Applying the functional equation for Dirichlet $L$-functions (\ref{eq:L-functional-first-functional})
to the identity (\ref{eq:Z-transformed-first-functional}) along with
(\ref{eq:cot-sieved}), and using Lemma \ref{lem:Z-initial-continuation}
we now get
\begin{multline}
Z(s,w;\chi\psi,\chi'\psi')=\frac{1}{2}\pi^{w-1/2}\frac{\Gamma\left(\frac{1-w}{2}\right)}{\Gamma\left(\frac{w}{2}\right)}(jC_{\psi'})^{1/2-w}\sum_{(m,2MN)=1}K_{2MN}(w;\tilde{\chi}_{m_{0}}\chi'\psi')\\
\times S(s,w;m,\chi\psi)\left[(1+\psi_{-1}(m_{0}))\cot\left(\frac{\pi w}{2}\right)^{\kappa'}+(1-\psi_{-1}(m_{0}))\cot\left(\frac{\pi w}{2}\right)^{(1-\kappa')}\right],\label{eq:Z-identity1}
\end{multline}
for $(s,w)\in R_{1}^{(2)}$ except for possible polar lines $\{s=1\}$
and $\{w=1\}$. There are two properties of the function $S$ which
we shall make use of, stated in the following lemma.
\begin{lem}
\label{prop:S}For two characters $\chi^{\star}$ and $\chi^{\star\star}$
and an integer $m$, the following two properties hold for $(s,w)\in R_{1}^{(2)}$,
except for possible polar lines $\{s=1\}$ and $\{w=1\}$.

\[
\begin{array}{rrcl}
\text{(i)} & S(s,w;m,\chi^{\star})\chi^{\star\star}(m) & = & S(s,w;m,\chi^{\star}\chi^{\star\star}),\\
\text{(ii)} & {\displaystyle \sum_{(m,2MN)=1}}S(s,w;m,\chi^{\star}) & = & Z(s+w-{\textstyle \frac{1}{2}},1-w;\chi^{\star},\chi'\psi').
\end{array}
\]
\end{lem}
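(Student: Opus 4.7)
Both parts of the lemma are essentially bookkeeping, and I expect the proof to be short; the only substantive point is verifying the region of validity for part (ii).

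For part (i), I would simply unwind the definition of $S$: only the factor $\chi^{\star}(m)$ depends on the character variable, so multiplying by $\chi^{\star\star}(m)$ and using that Dirichlet characters are completely multiplicative in their argument, $\chi^{\star}(m)\chi^{\star\star}(m)=(\chi^{\star}\chi^{\star\star})(m)$, which produces $S(s,w;m,\chi^{\star}\chi^{\star\star})$ on the nose.

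For part (ii), my plan is to recognize the sum $\sum_{(m,2MN)=1}S(s,w;m,\chi^{\star})$ as the second Dirichlet series representation of $Z$ from Lemma \ref{lem:Z-initial-continuation}, evaluated at transformed arguments. That representation reads
\[
Z(a,b;X,Y)=\sum_{(m,2MN)=1}\frac{L^{(2MN)}(b,\tilde{\chi}_{m_{0}}Y)\,X(m)\,Q_{m_{0},m_{1}}^{(Y)}(b)}{m^{a}},
\]
and the substitution $a=s+w-\tfrac{1}{2}$, $b=1-w$, $X=\chi^{\star}$, $Y=\chi'\psi'$ turns the summand into exactly $S(s,w;m,\chi^{\star})$. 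Thus (ii) is a direct consequence of Lemma \ref{lem:Z-initial-continuation}.

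The only mild technicality, and the closest thing to an obstacle, is justifying the asserted domain. I would first verify the identity on a common region of absolute convergence such as $\{\Re s>2,\ \Re w>1\}$, where the transformed point $(s+w-\tfrac{1}{2},1-w)$ satisfies $\Re(1-w)<0$ and $\Re(s+w-\tfrac{1}{2})+\Re(1-w)=\Re s+\tfrac{1}{2}>\tfrac{3}{2}$ and thus again lies in $R_{1}^{(2)}$; on that region the identity is a term-by-term match of absolutely convergent series. The extension to the rest of $R_{1}^{(2)}$ away from the stated polar lines then follows by analytic continuation, both sides being meromorphic on $R_{1}^{(2)}$ by Lemma \ref{lem:Z-initial-continuation}. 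I do not anticipate any deeper difficulty.
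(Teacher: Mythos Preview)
Your proposal is correct and matches the paper's treatment: the paper gives no explicit proof of this lemma, regarding it as immediate from the definition of $S$ and the second series representation of $Z$ in Lemma \ref{lem:Z-initial-continuation}. One small simplification you could make: the map $(s,w)\mapsto(s+w-\tfrac{1}{2},1-w)$ is an involution that preserves $R_{1}^{(2)}$ (a short case check), so the series identity in (ii) holds term-by-term on all of $R_{1}^{(2)}$ and the analytic continuation step is unnecessary.
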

We can apply the identity (\ref{eq:K-identity-3}) and Lemma \ref{prop:S}
to (\ref{eq:Z-identity1}), and use the bounds (\ref{eq:stirling})
and (\ref{eq:cot-bound}) to come to the following functional equation.
\begin{thm}
\label{thm:functional-1}Let $\chi$ and $\chi'$ be primitive Dirichlet
characters modulo squarefree $k$ and $j$ respectively, where $j,k\mid MN$,
and if $M=N$, then $j=k=M=N$, and let $\psi$ and $\psi'$ be Dirichlet
characters modulo 8. There exist functions $a_{n}^{(\chi',\psi')}(w;\psi^{\star})$
for $n\mid MN$ and $\psi^{\star}$ a Dirichlet character modulo 8
which are holomorphic except for possible poles at the positive integers,
and countably many poles on the line $\Re w=1$, bounded absolutely above by
$O((16\pi)^{\vert\Re w\vert}(1+\vert w\vert)^{1/2-\Re w})$ uniformly
in $j$ and $k$ away from the poles such that for $(s,w)\in R_{1}^{(2)}$
away from possible polar lines $\{s=1\}$ and $\{w=1\}$ we have
\begin{multline*}
Z(s,w;\chi\psi,\chi'\psi')=j{}^{1/2-w}\sum_{n\mid MN}A_{n}^{(\chi')}(w)\sum_{\psi^{\star}\in\widehat{(\mathbb{Z}/8\mathbb{Z})^{*}}}a_{n}^{(\chi',\psi')}(w;\psi^{\star})Z(s+w-{\textstyle \frac{1}{2}},1-w;\chi\tilde{\chi}_{n}\psi\psi^{\star},\chi'\psi'),
\end{multline*}
where $R_{1}^{(2)}$ is defined in Lemma \ref{lem:Z-initial-continuation},
we have

\[
\begin{array}{ccc}
A_{1}^{(\chi')}(w)=F_{M}^{(\chi')}(w)F_{N}^{(\chi')}(w), &  & A_{M}^{(\chi')}(w)=F_{N}^{(\chi')}(w)G_{M}^{(\chi')}(w),\\
A_{N}^{(\chi')}(w)=F_{M}^{(\chi')}(w)G_{N}^{(\chi')}(w), &  & A_{MN}^{(\chi')}(w)=G_{M}^{(\chi')}(w)G_{N}^{(\chi')}(w),
\end{array}
\]

and the $F$ and $G$ functions are defined in (\ref{eq:F,G-coeff-def}).
\end{thm}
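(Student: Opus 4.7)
The starting point is the identity (\ref{eq:Z-identity1}) already established. My plan is to expand the Euler product $K_{2MN}(w;\tilde\chi_{m_0}\chi'\psi')$ as a finite linear combination of character values at $m_0$, and then apply Lemma~\ref{prop:S} to collapse each resulting inner sum back to a single translate of $Z$. First I factor $K_{2MN}=K_2\cdot K_{MN}$ by multiplicativity in $P$ and apply identity (\ref{eq:K-identity-3}) to split $K_{MN}$ into four pieces indexed by $n\in\{1,M,N,MN\}$, each carrying a coefficient built from the $F$ and $G$ factors, times $\chi_n(m_0)$. The same reasoning behind (\ref{eq:K-identity-2}) applied at the prime $2$ splits $K_2$ into a piece independent of $m_0$ and a piece proportional to $\chi_2(m_0)=\psi_2(m_0)$ (when $\psi'\in\{\psi_{\pm 2}\}$ the factor $K_2$ degenerates to $1$, so only the trivial piece survives).

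Combining with the cotangent sieve (\ref{eq:cot-sieved}), the $m_0$-dependent character data now packages into factors of the form $\chi_n(m_0)\psi^\star(m_0)$, where $\psi^\star$ ranges over the four Dirichlet characters modulo~$8$ formed by pairing $\{\psi_1,\psi_{-1}\}$ from the sieve with $\{\psi_1,\psi_2\}$ from the $K_2$-expansion. Because $(m,2MN)=1$ and both $\chi_n$ and $\psi^\star$ are quadratic, $\chi_n(m_0)\psi^\star(m_0)=\chi_n(m)\psi^\star(m)$, and by the reciprocity law at the start of the paper $\chi_n(m)$ agrees with $\tilde\chi_n(m)$ up to a factor of $\psi_{-1}(m)$ determined by $n\bmod 4$, which is harmlessly absorbed into the range of $\psi^\star$. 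Each inner sum then collapses, via Lemma~\ref{prop:S}(i)--(ii), to $Z(s+w-\tfrac12,1-w;\chi\tilde\chi_n\psi\psi^\star,\chi'\psi')$. Gathering the prefactors produces exactly $j^{1/2-w}A_n^{(\chi')}(w)\,a_n^{(\chi',\psi')}(w;\psi^\star)$, where $a_n^{(\chi',\psi')}$ absorbs $\pi^{w-1/2}C_{\psi'}^{1/2-w}$, the Gamma quotient $\Gamma((1-w)/2)/\Gamma(w/2)$, the appropriate power of $\cot(\pi w/2)$, the $K_2$-data, and the factors of $\psi'(M)$ and $\psi'(N)$ that appear because $G_M^{(\chi'\psi')}=\psi'(M)G_M^{(\chi')}$ while $A_n^{(\chi')}$ is defined using $G^{(\chi')}$ only.

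The analytic bound on $a_n^{(\chi',\psi')}(w;\psi^\star)$ then follows directly: Stirling (\ref{eq:stirling}) gives the $(1+|w|)^{1/2-\Re w}$ factor, the bound (\ref{eq:cot-bound}) controls the cotangent away from its poles at the positive integers (which furnish the advertised poles at positive integers), and elementary estimates on $\pi^{w-1/2}$, $C_{\psi'}^{1/2-w}$ and the $K_2$ denominator terms produce the $(16\pi)^{|\Re w|}$ growth; the countable family of poles on the line $\Re w=1$ arises from zeros of the $K_2$ denominator $\chi'\psi'(4)2^{2w}-4$. The degenerate case $M=N$ is automatically compatible: with $k=j=M=N$ one has $G_N^{(\chi')}\equiv 0$ and $F_N^{(\chi')}\equiv 1$, so only the $n=1$ summand survives, consistent with $K_{MN}=1$ in that situation. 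I expect the main obstacle to be the careful bookkeeping across these expansions --- tracking which $\psi'(M), \psi'(N), \chi'(2)$ prefactors belong to $a_n^{(\chi',\psi')}$ rather than $A_n^{(\chi')}$, and verifying without sign errors the reciprocity conversions $\chi_n(m_0)\leftrightarrow\tilde\chi_n(m)$.
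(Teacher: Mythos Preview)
Your proposal is correct and follows essentially the same route as the paper: starting from (\ref{eq:Z-identity1}), applying the expansion (\ref{eq:K-identity-3}) together with a parallel treatment of the $K_2$ factor, using Lemma~\ref{prop:S} to collapse each piece into a shifted $Z$, and then invoking (\ref{eq:stirling}) and (\ref{eq:cot-bound}) for the analytic bounds. The paper's own argument is stated tersely in the sentence immediately preceding the theorem, and your write-up simply fills in the bookkeeping (the $K_2$ split, the reciprocity conversion $\chi_n\leftrightarrow\tilde\chi_n$, the absorption of $\psi'(M),\psi'(N)$ into $a_n$) that the paper leaves implicit.
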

By similar methods, we can obtain a second functional equation under
the transformation $(s,w)\mapsto(1-s,s+w-{\textstyle \frac{1}{2}})$
by applying the functional equation (\ref{eq:Building-block-reflective})
to the definition (\ref{eq:Z-definition}), followed by the functional
equation for $L$-functions. The result of this similarly lengthy
derivation is the following second functional equation.
\begin{thm}
\label{thm:functional-2}Let $\chi$ and $\chi'$ be Dirichlet characters
modulo squarefree $k$ and $j$ respectively, where $j,k\mid MN$,
and if $M=N$, then $j=k=M=N$, and let $\psi$ and $\psi'$ be Dirichlet
characters modulo 8. There exist functions $b_{n}^{(\chi,\psi)}(s;\psi^{\star})$
for $n\mid MN$ and $\psi^{\star}$ a Dirichlet character modulo 8
which are holomorphic except for possible poles at the positive integers,
and countably many poles on the line $\Re s=1$, bounded absolutely above by
$O((16\pi)^{\vert\Re s\vert}(1+\vert s\vert)^{1/2-\Re s})$ uniformly
in $j$ and $k$ away from the poles such that for $(s,w)\in R_{1}^{(1)}$
away from possible polar lines $\{s=1\}$ and $\{w=1\}$ we have 
\begin{multline*}
Z(s,w;\chi\psi,\chi'\psi')=k{}^{1/2-s}\sum_{n\mid MN}A_{n}^{(\chi)}(s)\sum_{\psi^{\star}\in\widehat{(\mathbb{Z}/8\mathbb{Z})^{*}}}b_{n}^{(\chi,\psi)}(s;\psi^{\star})Z(1-s,s+w-{\textstyle \frac{1}{2}};\chi\psi,\chi'\tilde{\chi}_n\psi'\psi^{\star}),
\end{multline*}
where $R_{1}^{(1)}$ is defined in Lemma \ref{lem:Z-initial-continuation},
and the $A$ functions are as in Theorem \ref{thm:functional-1}.\end{thm}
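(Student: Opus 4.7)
The plan is to mirror the derivation of Theorem \ref{thm:functional-1} while interchanging the roles of $s$ and $w$, and sieving over $d_0$ in place of $m_0$; the starting point is the defining series (\ref{eq:Z-definition}) rather than its interchanged form. First I would apply the $P$-factor reflection in (\ref{eq:Building-block-reflective}); since $\chi\psi$ is real and $(d_1,2MN)=1$ forces $(\chi\psi)(d_1^2)=1$, the identity collapses to $P_{d_0,d_1}^{(\chi\psi)}(s)=d_1^{1-2s}P_{d_0,d_1}^{(\chi\psi)}(1-s)$. Then I would apply the Dirichlet $L$-function functional equation to $L^{(2MN)}(s,\chi_{d_0}\chi\psi)$, producing the gamma quotient $\Gamma(\tfrac{1-s+\hat\kappa}{2})/\Gamma(\tfrac{s+\hat\kappa}{2})$ with $\hat\kappa=\kappa(\chi_{d_0}\chi\psi)$, a prefactor $\pi^{s-1/2}$, a conductor factor $(c_0 C_\psi k d_0)^{1/2-s}$ (with $c_0\in\{1,4,8\}$ depending on $d_0\bmod 4$ and $\psi$), the Euler correction $K_{2MN}(s;\chi_{d_0}\chi\psi)$, and the reflected $L$-value $L^{(2MN)}(1-s,\chi_{d_0}\chi\psi)$, the character being real.

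The central algebraic identity $d_0^{1/2-s}\,d_1^{1-2s}/d^w=1/d^{s+w-1/2}$ combines the conductor factor from the $L$-equation with the $d_1$-power from the $P$-equation to deliver the variable change $w\mapsto s+w-\tfrac{1}{2}$, so the $d$-sum is already shaped like a $Z$-series at $(1-s,s+w-\tfrac{1}{2})$ once the $d_0$-dependent nuisances are isolated. I would then expand $K_{MN}(s;\chi_{d_0}\chi\psi)$ via multiplicativity and (\ref{eq:K-identity-2}) into four pieces $F_M^{(\chi\psi)}F_N^{(\chi\psi)}(s)+\chi_{d_0}(M)F_N^{(\chi\psi)}G_M^{(\chi\psi)}(s)+\chi_{d_0}(N)F_M^{(\chi\psi)}G_N^{(\chi\psi)}(s)+\chi_{d_0}(MN)G_M^{(\chi\psi)}G_N^{(\chi\psi)}(s)$, producing the four values of $A_n^{(\chi)}(s)$ indexed by $n\mid MN$. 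Pleasantly, $\chi_{d_0}(n)=(d_0/n)=\tilde\chi_n(d_0)$ needs no quadratic-reciprocity correction --- a simplification relative to the $\chi_p(m_0)=\tilde\chi_{m_0}(p)$ that appeared in the first functional equation --- so these sieving symbols become directly the twist $\tilde\chi_n$ on $d_0$. The remaining $d_0$-dependencies in $\hat\kappa$, $c_0$, and the $K_2$ Euler factor depend only on $d_0\bmod 4$ or $\bmod 8$, and I would isolate them by sieves analogous to (\ref{eq:cot-sieved}) that introduce an additional character of $d_0$ of conductor dividing $8$, which I index by $\psi^\star\in\widehat{(\mathbb{Z}/8\mathbb{Z})^{*}}$.

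After the dust settles, the inner $d$-sum assembles via the definition (\ref{eq:Z-definition}) into $Z(1-s,s+w-\tfrac{1}{2};\chi\psi,\chi'\tilde\chi_n\psi'\psi^\star)$. Pulling the $d_0$-independent factor $k^{1/2-s}$ outside, and folding the $\pi^{s-1/2}$, gamma quotient, cotangent from (\ref{eq:cot-identity}), $C_\psi^{1/2-s}$, $c_0^{1/2-s}$, and $K_2$ contributions into the coefficient $b_n^{(\chi,\psi)}(s;\psi^\star)$, yields the stated identity. The uniform bound $O((16\pi)^{|\Re s|}(1+|s|)^{1/2-\Re s})$ on $b_n$ follows from Stirling (\ref{eq:stirling}) applied to the gamma quotient and the cotangent bound (\ref{eq:cot-bound}), while the polar line $\Re s=1$ enters only through $\cot(\pi s/2)$. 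The region of validity $R_1^{(1)}$ is the one on which the starting representation converges by Lemma \ref{lem:Z-initial-continuation}, excluding the polar lines $\{s=1\}$ and $\{w=1\}$. The main bookkeeping obstacle is simultaneously managing the four-term $K_{MN}$ expansion, the cotangent sieve for $\hat\kappa$, and the mod-$8$ sieve for $c_0$ and $K_2$ so that every resulting character of $d_0$ lands consistently in the $\psi^\star$ slot; once that accounting is done, the derivation is a formal mirror of the proof of Theorem \ref{thm:functional-1}.
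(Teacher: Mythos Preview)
Your proposal is correct and follows exactly the route the paper indicates: start from the defining $d$-sum (\ref{eq:Z-definition}), apply the $P$-reflection (\ref{eq:Building-block-reflective}) and then the Dirichlet functional equation, expand $K_{MN}$ via (\ref{eq:K-identity-2}) to produce the $A_n^{(\chi)}(s)$, and sieve over residues of $d_0$ modulo $8$ to package the remaining $d_0$-dependence into the $\psi^\star$ twist---this is precisely the ``similar methods'' the paper invokes. One small slip: the countably many poles on the line $\Re s=1$ do not come from $\cot(\pi s/2)$ (whose poles sit at even integers) but from the $K_2$ Euler factor you correctly folded into $b_n$, via the denominator $(1-\chi^\star(2)2^{s-1})^{-1}$; the cotangent and gamma quotient together account for the poles at positive integers.
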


We make note of an analytic subtlety: We note that although the $a_{n}$,
$b_{n}$, and $A_{n}$ functions above have poles, they do not contribute
poles to $Z(s,w;\chi\psi,\chi'\psi')$; indeed, it will be proven
in Proposition \ref{prop:continuation} that the only possible poles
of $Z$ are the polar lines $\{s=1\}$, $\{w=1\}$, and $\{s+w=3/2\}$.
Looking at (\ref{eq:Z-identity1}), though the gamma and cotangent
factors together have poles at either the even or odd positive integers,
and the $K$ factor has countably many poles along the line $\Re w=1$
(cf. (\ref{eq:K-identity})), these poles nonetheless do not produce
poles on the right-hand side. The equation (\ref{eq:Z-identity1})
essentially results from the application of the functional equation
for $L$-functions (\ref{eq:L-functional-first-functional}) to the
identity (\ref{eq:Z-transformed-first-functional}), and subsequently
sieving out by congruence classes of $m$ modulo 4. The last step
introduces coefficients with poles from the gamma and cotangent factors.
This is a manifestation of a phenomenon that is observed in the functional
equation for $L$-functions: In (\ref{eq:L-functional-first-functional}),
we know that the $L$-function can only have a pole at $w=1$, yet
on the right-hand side, the gamma function produces poles which are
mitigated by the trivial zeros of the $L$-function. It is precisely
these poles which appear in the coefficients of (\ref{eq:Z-identity1}).
Additionally, although the $K$ function has poles, these are mitigated
by corresponding zeros of the $L$ function due to removal of the
Euler factors at primes dividing $2MN$.

It shall be useful to note the following properties of the $A$ coefficients,
which follow directly from the definitions.
\begin{lem}
\label{lem:functional-coefficients}Let $\chi$ be a Dirichlet character
modulo $q$. Then the following properties hold.

\begin{enumerate}

\item For a positive integer $P$, if $(q,P)>1$ then $A_{P}^{(\chi)}(w)=0$.

\item If $q=MN$ then $A_{1}^{(\chi)}(w)=1$.

\item If $q=M$ then $A_{1}^{(\chi)}(w)=F_{N}^{(\chi)}(w)$ and $A_{N}^{(\chi)}(w)=G_{N}^{(\chi)}(w)$. 

\item If $q=N$ then $A_{1}^{(\chi)}(w)=F_{M}^{(\chi)}(w)$ and $A_{M}^{(\chi)}(w)=G_{M}^{(\chi)}(w)$.

\item
Moreover, the following asymptotics hold, if $P\neq1$ and $(P,q)=1$.

\[
\begin{array}{ccc}
\vert F_{P}^{(\chi)}(w)\vert\asymp\begin{cases}
1, & \Re w<1-\varepsilon;\\
P^{2-2\Re w}, & \Re w>1+\varepsilon,
\end{cases} &  & \vert G_{P}^{(\chi)}(w)\vert\asymp\begin{cases}
P^{-\Re w}, & \Re w<{\textstyle \frac{1}{2}};\\
P^{\Re w-1}, & {\textstyle \frac{1}{2}}\leq\Re w<1-\varepsilon;\\
P^{1-\Re w}, & \Re w>1+\varepsilon.
\end{cases}\end{array}
\]

\end{enumerate}

\end{lem}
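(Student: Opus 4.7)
The plan is to unpack the definitions directly: parts (1)--(4) follow from how the rational functions $F_P^{(\chi)}$ and $G_P^{(\chi)}$ collapse when $\chi(P)=0$, while part (5) is pure dominant-term analysis of explicit rational functions of $P^{w}$.

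First I would dispatch parts (1)--(4) together. Since $M$ and $N$ are each prime or unity, the only relevant divisors of $MN$ are $1,M,N,MN$, and the definitions in Theorem \ref{thm:functional-1} give each $A_P^{(\chi)}$ as a product of an $F$ and a $G$ factor at the primes $M,N$. From (\ref{eq:F,G-coeff-def}) one reads off that whenever $P$ is a prime with $\chi(P)=0$ (equivalently $P\mid q$), the numerator of $G_P^{(\chi)}$ vanishes identically in $w$, while $F_P^{(\chi)}$ degenerates to $(-P^2)/(-P^2)=1$. Substituting these two identities into the four definitions of $A_1,A_M,A_N,A_{MN}$ yields (1)--(4) in one pass: any $A_P^{(\chi)}$ containing a $G$-factor at a prime dividing $q$ vanishes, and the survivors simplify to $1$ or to the remaining $F$ or $G$ factor at the prime coprime to $q$.

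For part (5), I would use $(P,q)=1$ together with the fact that $\chi$ is quadratic, so $\chi(P^2)=\chi(P)^2=1$ and $|\chi(P)|=1$; the moduli $|F_P^{(\chi)}(w)|$ and $|G_P^{(\chi)}(w)|$ then become character-free rational expressions in $P^{\Re w}$. The strategy is to analyze the common denominator first: $|P^{2w}-P^2|\asymp P^2$ when $\Re w<1-\varepsilon$ and $|P^{2w}-P^2|\asymp P^{2\Re w}$ when $\Re w>1+\varepsilon$, with the $\varepsilon$-buffers keeping us away from the zeros of $P^{2w}-P^2$ on the line $\Re w=1$. The $F_P$ numerator has size $\asymp P^2$, which immediately gives the two cases asserted. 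For $G_P$, the numerator $|P^{2-w}-P^{1+w}|$ is controlled by $P^{2-\Re w}$ for $\Re w<1/2$ and by $P^{1+\Re w}$ for $\Re w\geq 1/2$; dividing by the appropriate denominator bound yields the three-piece asymptotic, the transition at $\Re w=1/2$ being exactly where the two numerator terms switch dominance.

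The only real difficulty is bookkeeping: checking that the subdominant contributions are negligible \emph{uniformly} in $P\in\{M,N\}$ so that the implied $\asymp$-constants can be taken independent of $P$ (with a mild dependence on $w$ absorbing possible cancellations in the $G_P$ numerator when $\Re w=1/2$). This works cleanly because $P\geq 2$ and the $\varepsilon$-buffer supplies a positive gap in exponents. There is no analytic obstacle here; the entire lemma reduces to careful manipulation of four explicit rational functions in $P^{w}$.
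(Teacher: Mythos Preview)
Your proposal is correct and matches the paper's approach: the paper simply states that these properties ``follow directly from the definitions'' without giving any further argument, and your proof is precisely a careful unpacking of those definitions (the collapse of $F_P^{(\chi)},G_P^{(\chi)}$ when $\chi(P)=0$ for parts (1)--(4), and dominant-term analysis of the explicit rational functions in (\ref{eq:F,G-coeff-def}) for part (5)). Your observation about possible cancellation in the $G_P$ numerator at $\Re w=1/2$ is a legitimate subtlety that the paper leaves implicit.
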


It shall also be useful to derive a somewhat symmetric functional equation, obtained by application of Theorem \ref{thm:functional-1}, then Theorem \ref{thm:functional-2}, followed again by Theorem \ref{thm:functional-1}. For quadratic Dirichlet characters $\rho$ and $\rho'$ of conductors $N$ and $M$ respectively which are either prime or unity, possibly equal, we have

\begin{multline}
\label{eq:functional-3}
Z(s,w;\rho\psi,\rho'\psi')=
M^{1/2-w}
\sum_{n,m,r\mid MN}
C_{\rho \tilde{\chi}_n}^{1-s-w}
C_{\rho' \tilde{\chi}_m}^{1/2-s}
	A_{n}^{(\rho')}(w)
	A_m^{(\rho \tilde{\chi}_n)}(s+w-1/2)
	A_r^{(\rho'\tilde{\chi}_m)}(w)
\\
\times \sum_{\psi^{\star}, \psi^{\star\star}, \psi^{\star\star\star}\in\widehat{(\mathbb{Z}/8\mathbb{Z})^{*}}}
	c_{n,m,r}^{(\rho,\rho',\psi,\psi')}(s,w;\psi^\star,\psi^{\star\star},\psi^{\star\star\star})
Z(1-w,1-s;\rho\tilde{\chi}_{nr}\psi\psi^{\star}\psi^{\star\star\star},\rho'\tilde{\chi}_m\psi'\psi^{\star\star}),
\end{multline}
where the $A$ functions are as in Theorem \ref{thm:functional-1}, the $c$ functions are holomorphic in $\mathbb{C}^2$ except for possible poles for $s$, $w$, or $s+w-1/2$ equal to positive integers, and countably many poles on the lines $\Re w = 1$, $\Re s = 1$, and $\Re s + \Re w = 3/2$, bounded absolutely above by
\begin{equation}
\label{eq:functional-3-coeffs-bound}
O((16\pi)^{2\vert \Re s \vert+2\vert \Re w \vert} (1+\vert s \vert)^{1/2-\Re s} (1+\vert w \vert)^{1/2-\Re w} (1+\vert s+w \vert)^{1-\Re s - \Re w}).
\end{equation}

\subsection{Analytic continuation}

We continue $Z(s,w;\chi,\chi')$ to all of $\mathbb{C}^{2}$ except
for the polar lines $s=1$, $w=1$, and $s+w=3/2$. We have 
\begin{prop}
\label{prop:continuation}Let $\chi$ and $\chi'$ be characters modulo
$8\text{lcm}(M,N)$. The function 
\[
\tilde{Z}(s,w;\chi,\chi')=(s-1)(w-1)(s+w-3/2)Z(s,w;\chi,\chi')
\]
 is holomorphic in $\mathbb{C}^{2}$ and is polynomially bounded in
the sense that, given $C_{1}>0$, there exists $C_{2}>0$ such that
$\tilde{Z}(s,w;\chi,\chi')\ll[MN(1+\vert\Im s\vert)(1+\vert\Im w\vert)]^{C_{2}}$
whenever $\vert\Re s\vert,\vert\Re w\vert<C_{1}$.\end{prop}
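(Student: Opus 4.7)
The plan is to combine the two series representations from Lemma \ref{lem:Z-initial-continuation} with the functional equations in Theorems \ref{thm:functional-1} and \ref{thm:functional-2}, which together generate an action of the dihedral group $D_6$ on $(s,w) \in \mathbb{C}^2$. Writing $\sigma : (s,w) \mapsto (s+w-\tfrac12,\, 1-w)$ and $\tau : (s,w) \mapsto (1-s,\, s+w-\tfrac12)$, one verifies directly that $\sigma^2 = \tau^2 = (\sigma\tau)^3 = \mathrm{id}$, so the group has six elements whose action on the real plane is by reflections through the walls $\{\Re w = \tfrac12\}$, $\{\Re s = \tfrac12\}$, and $\{\Re s + \Re w = 1\}$. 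Consequently, the $D_6$-orbit of the base region $\{\Re s > 1,\, \Re w > 1\}$ tiles $\mathbb{R}^2$ into six closed chambers.

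For the analytic continuation, Lemma \ref{lem:Z-initial-continuation} already provides holomorphy on $R_1^{(1)} \cup R_1^{(2)}$ away from the polar lines $\{s=1\}$ and $\{w=1\}$. Theorem \ref{thm:functional-1} expresses $Z(s,w;\chi\psi,\chi'\psi')$ on $R_1^{(2)}$ in terms of $Z$-values at $(s+w-\tfrac12,\, 1-w)$, which extends the domain of definition to $\sigma(R_1^{(1)} \cup R_1^{(2)})$; the $\sigma$-preimage of the polar line $\{s=1\}$ introduces the new candidate polar line $\{s+w = \tfrac32\}$. Theorem \ref{thm:functional-2} does the same with $\tau$. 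Iterating through the six elements of $D_6$, the resulting regions together cover $\mathbb{C}^2$, and throughout this process the only possible polar lines that appear are $\{s=1\}$, $\{w=1\}$, and $\{s+w = \tfrac32\}$; I would confirm, as in the remark following Theorem \ref{thm:functional-2}, that the coefficient poles on $\{\Re s = 1\}$ and $\{\Re w = 1\}$ are cancelled by the corresponding zeros of the Euler-factor-removed $L$-functions, so that $\tilde Z(s,w;\chi,\chi')$ is holomorphic on all of $\mathbb{C}^2$.

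For the polynomial bound, on the absolute-convergence region $\{\Re s > 1+\varepsilon,\, \Re w > 1+\varepsilon\}$ the defining series trivially gives $Z = O(1)$. Combining this with the bound $(16\pi)^{|\Re w|}(1+|w|)^{1/2-\Re w}$ on the coefficients $a_n$ (respectively the analogous bound for $b_n$) from Theorems \ref{thm:functional-1} and \ref{thm:functional-2}, the asymptotics of the $F$- and $G$-factors in Lemma \ref{lem:functional-coefficients}, and Stirling's estimate (\ref{eq:stirling}), I would obtain polynomial bounds $\tilde Z(s,w;\chi,\chi') \ll [MN(1+|\Im s|)(1+|\Im w|)]^{C_2}$ on the exterior parts of each of the six $D_6$-chambers, uniformly in $M$, $N$, $\chi$, $\chi'$. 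For the remaining bounded region $|\Re s|,\, |\Re w| < C_1$, a two-variable Phragm\'en-Lindel\"of argument applied to the holomorphic function $\tilde Z$ interpolates the polynomial growth from the tube boundary to the interior.

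The main obstacle is the bookkeeping required to keep track of the six $D_6$-chambers along with the evolving characters $(\chi\psi,\chi'\psi')$ under each application of the functional equations, and verifying that no polar lines beyond the three claimed appear. In particular, confirming that the coefficient poles listed in Theorems \ref{thm:functional-1} and \ref{thm:functional-2} on the lines $\{\Re s = 1\}$ and $\{\Re w = 1\}$ really are cancelled inside $Z$ (rather than contributing new singularities of $\tilde Z$) is the delicate analytic step that ensures the prefactor $(s-1)(w-1)(s+w-\tfrac32)$ suffices.
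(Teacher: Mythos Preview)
Your outline follows the standard argument that the paper defers to \cite{bfh} and \cite{blomer}; since the paper's own proof is merely a citation, your proposal is in fact more detailed than what appears there and is in the right spirit.

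There is, however, one genuine gap. You assert that the $D_6$-translates of $R_1^{(1)}\cup R_1^{(2)}$ ``together cover $\mathbb{C}^2$.'' This is false: the centre $(\Re s,\Re w)=(\tfrac12,\tfrac12)$ is fixed by both $\sigma$ and $\tau$, hence by the entire group, and it does not lie in $R_1^{(1)}\cup R_1^{(2)}$ (for instance, at $\Re s=\tfrac12$ the condition $\Re s/2+\Re w>\tfrac32$ forces $\Re w>\tfrac54$). More generally, the six images miss an open neighbourhood of the centre. The standard remedy, used in \cite{blomer}, is Bochner's tube theorem: the six translates do cover a tube over the complement of a bounded subset of $\mathbb{R}^2$, and since $\tilde Z$ is holomorphic there, it extends to the tube over the convex hull of that base, which is all of $\mathbb{C}^2$. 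Alternatively one can argue via Cauchy's theorem in each variable separately and Hartogs' theorem. Either way, an extra convexity step is required beyond the group action itself, and you should make this explicit.

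A second, smaller point: for the polynomial bound you invoke Phragm\'en--Lindel\"of ``on the tube boundary,'' but after Bochner's theorem the continuation is abstract and gives no growth control on the filled-in region. One must instead obtain polynomial bounds on overlapping tubes that already cover the strip $|\Re s|,|\Re w|<C_1$ by pushing the functional equations a bit further (using the full regions $R_1^{(j)}$, not just $\{\Re s,\Re w>1\}$), and then interpolate. This is routine but worth flagging.
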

\begin{proof}
We refer to the reader to \cite{bfh} and \cite{blomer}.
\end{proof}

\subsection{\label{sub:convexity-bound}Convexity bound}

The notion of convexity is not canonically defined for double Dirichlet
series as it is in the case of (single) Dirichlet series; in the latter
case, we have a single functional equation which reflects the region
of absolute convergence, and interpolating the bounds produces a convexity
bound between the two. In the case of double Dirichlet series, things
are more complicated. Firstly, our bounds in the region of absolute
convergence depend on our knowledge of the bounds on $L(s,\chi)$
on average. Secondly, we have 6 functional equations to choose from
to apply to this region. If we use the Lindelöf hypothesis on average,
namely Theorem \ref{thm:heath-brown-estimate-fourth}, then we can
carefully choose a functional equation to apply in order to minimize
the resulting convexity bound from application of the Phragm\'en-Lindelöf
convexity principle (cf. Theorem 5.53 of \cite{IK04}).

We shall require some initial bounds. We let $\chi$ and $\chi'$
be quadratic Dirichlet characters with conductors $k$ and $j$ respectively,
and $\psi,\psi'$ are characters modulo 8. We first assume that $\Re s=1/2$
and $\Re w=1+\varepsilon$. We apply the identity (\ref{eq:L-remove-primes}),
the bounds (\ref{eq:P,Q-bounds}), and the Cauchy-Schwarz inequality
with the average bound of Theorem \ref{thm:heath-brown-estimate-fourth}
to obtain 
\begin{equation}
Z(s,w;\chi\psi,\chi'\psi')\ll(MN)^{\varepsilon}k^{1/4+\varepsilon}(1+\vert s\vert)^{1/4+\varepsilon},\Re s=1/2,\Re w=1+\varepsilon,\label{eq:convex-trivial-1}
\end{equation}
and by the switch of summation formula (\ref{eq:Building-block-sum-switch})
we also obtain
\begin{equation}
Z(s,w;\chi\psi,\chi'\psi')\ll(MN)^{\varepsilon}j^{1/4+\varepsilon}(1+\vert w\vert)^{1/4+\varepsilon},\ \Re s=1+\varepsilon,\Re w=1/2.\label{eq:convex-trivial-2}
\end{equation}

We use the functional equation Theorem \ref{thm:functional-2} with
$\Re s=-\varepsilon$ and $\Re w=1+\varepsilon$ and apply (\ref{eq:convex-trivial-2})
on the right-hand side in order to obtain a bound for $Z(s,w;\rho,\rho')$.
Looking at the coefficient bounds in Lemma \ref{lem:functional-coefficients},
we pick up a factor of $N^{1/2+\varepsilon}$. Further, we see that
the resulting twisting characters on the right-hand side will have
conductors $(k,j)\in\{(N,M),(N,1)\}$, so that we have 
\begin{equation}
Z(s,w;\rho,\rho')\ll N^{1/2+\varepsilon}M^{1/4+\varepsilon}(1+\vert s\vert)^{1/2+\varepsilon}(1+\vert s+w\vert)^{1/4+\varepsilon},\ \Re s=-\varepsilon,\Re w=1+\varepsilon.\label{eq:pre-interpolate-1}
\end{equation}
Likewise with the functional equation Theorem \ref{thm:functional-1}
applied to (\ref{eq:convex-trivial-1}), we have the symmetric bound
\begin{equation}
Z(s,w;\rho,\rho')\ll M^{1/2+\varepsilon}N^{1/4+\varepsilon}(1+\vert w\vert)^{1/2+\varepsilon}(1+\vert s+w\vert)^{1/4+\varepsilon},\ \Re s=1+\varepsilon,\Re w=-\varepsilon.\label{eq:pre-interpolate-2}
\end{equation}
We wish to interpolate convexly between these bounds along the two
diagonal lines in $\mathbb{C}^{2}$, but we must deal with the potential
poles at $s=1$ and $w=1$. To do this we can simply multiply both
sides of the bounds by $(s-1)(w-1)$. We obtain the following bound.
\begin{prop}
For quadratic characters $\rho$ and $\rho'$ of prime moduli $N$
and $M$ respectively, we have
\[
Z(s,w;\rho,\rho')\ll[(1+\vert s\vert)(1+\vert w\vert)(1+\vert s+w\vert)]^{1/4+\varepsilon}(MN)^{3/8+\varepsilon},\ \Re s=\Re w=1/2,
\]
which we call the convexity bound for the function $Z(s,w;\rho,\rho')$.
\end{prop}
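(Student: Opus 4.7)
The plan is to apply the Phragm\'en-Lindel\"of convexity principle along a single complex line in $\mathbb{C}^{2}$ passing through the target point $(1/2+it, 1/2+iu)$ and meeting each of the two regions on which the auxiliary bounds (\ref{eq:pre-interpolate-1}) and (\ref{eq:pre-interpolate-2}) have been established.

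Fixing the target $(s_{0}, w_{0}) = (1/2+it, 1/2+iu)$, I would parametrize
\[
s(z) = s_{0} + z(1/2+\varepsilon), \qquad w(z) = w_{0} - z(1/2+\varepsilon), \qquad z \in \mathbb{C}.
\]
Along this complex line $s(z) + w(z) \equiv 1 + i(t+u)$ is constant, so the polar line $\{s+w = 3/2\}$ is automatically avoided; the vertical edges $\Re z = -1$ and $\Re z = 1$ land precisely on the diagonal regions $(\Re s, \Re w) = (-\varepsilon, 1+\varepsilon)$ and $(1+\varepsilon, -\varepsilon)$ where bounds have already been obtained, while $\Re z = 0$ hits the target point.

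Consider next the auxiliary function $h(z) := (s(z)-1)(w(z)-1)\,Z(s(z),w(z);\rho,\rho')$. By Proposition \ref{prop:continuation}, $(s-1)(w-1)(s+w-3/2)Z(s,w;\rho,\rho')$ is entire in $(s,w)$ and polynomially bounded on strips of bounded width; since $s(z)+w(z)-3/2$ is a nonzero constant along our line, $h(z)$ is itself entire in $z$ with polynomial growth in $|\Im z|$ on any bounded vertical strip. Applying (\ref{eq:pre-interpolate-1}) at $\Re z = -1$, together with $|s-1| \ll 1+|s|$ and $|w-1| \ll 1+|w|$, gives a bound of shape $N^{1/2+\varepsilon}M^{1/4+\varepsilon}(1+|s|)^{3/2+\varepsilon}(1+|w|)(1+|s+w|)^{1/4+\varepsilon}$ there, and symmetrically via (\ref{eq:pre-interpolate-2}) at $\Re z = 1$. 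Phragm\'en-Lindel\"of on the strip $-1 \le \Re z \le 1$ then yields the geometric mean bound at $\Re z = 0$:
\[
|h(0)| \ll (MN)^{3/8+\varepsilon}(1+|t|)^{5/4+\varepsilon}(1+|u|)^{5/4+\varepsilon}(1+|t+u|)^{1/4+\varepsilon}.
\]

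Finally, dividing through by $|(s_{0}-1)(w_{0}-1)| \asymp (1+|t|)(1+|u|)$ and using $|s_{0}| \asymp 1+|t|$, $|w_{0}| \asymp 1+|u|$, $|s_{0}+w_{0}| \asymp 1+|t+u|$, recovers the claimed convexity bound. The main technical point is essentially bookkeeping: verifying that the polynomial bound from Proposition \ref{prop:continuation} is strong enough to legitimate the Phragm\'en-Lindel\"of step (which it is, since only polynomial growth in $|\Im z|$ need be controlled), and that the exponents in the geometric-mean interpolation combine to exactly $(MN)^{3/8+\varepsilon}$ with the stated $1/4+\varepsilon$ powers on the archimedean factors. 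All the hard analytic input has already been marshalled in establishing (\ref{eq:pre-interpolate-1}) and (\ref{eq:pre-interpolate-2}) via the fourth-moment estimate of Theorem \ref{thm:heath-brown-estimate-fourth} and the functional equations of Theorems \ref{thm:functional-1} and \ref{thm:functional-2}.
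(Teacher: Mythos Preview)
Your proof is correct and follows essentially the same approach as the paper: interpolate between the bounds (\ref{eq:pre-interpolate-1}) and (\ref{eq:pre-interpolate-2}) along the diagonal in $\mathbb{C}^{2}$ after clearing the poles at $s=1$ and $w=1$ by multiplying through by $(s-1)(w-1)$. The paper states this in two sentences; you have simply written out the explicit parametrization and the exponent bookkeeping that the paper leaves implicit.
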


\section{Approximate Functional Equations}

\subsection{A symmetric functional equation}

We introduce a succession of applications of the functional equations
in the special case of $(s,w)=(1/2,1/2-z)$ for some $z\in\mathbb{C}$
with $\Re z>0$. We recall that $\rho$ and $\rho'$ are primitive
quadratic Dirichlet characters of conductors $N$ and $M$ respectively.
We first apply Theorem \ref{thm:functional-1}, which after observing
the coefficient properties of Lemma \ref{lem:functional-coefficients}
gives
\begin{multline}
Z({\textstyle \frac{1}{2}},{\textstyle \frac{1}{2}}-z;\rho,\rho')=\sum_{\psi^{\star}\in\widehat{(\mathbb{Z}/8\mathbb{Z})^{*}}}\left[M^{z}F_{N}^{(\rho')}a_{1}^{(\rho',\psi_{1})}({\textstyle \frac{1}{2}}-z;\psi^{\star})Z({\textstyle \frac{1}{2}}-z,{\textstyle \frac{1}{2}}+z;\rho\psi^{\star},\rho')\right.\\
\left.+M^{z}G_{N}^{(\rho')}({\textstyle \frac{1}{2}}-z)a_{N}^{(\rho',\psi_{1})}({\textstyle \frac{1}{2}}-z;\psi^{\star})Z({\textstyle \frac{1}{2}}-z,{\textstyle \frac{1}{2}}+z;\psi^{\star},\rho')\right].\label{eq:special-functional-pre}
\end{multline}
We then apply the functional equation Theorem \ref{thm:functional-2}
(and again use Lemma \ref{lem:functional-coefficients}) which further
gives
\begin{multline}
Z({\textstyle \frac{1}{2}},{\textstyle \frac{1}{2}}-z;\rho,\rho')=\\
\sum_{\psi^{\star},\psi^{\star\star}\in\widehat{(\mathbb{Z}/8\mathbb{Z})^{*}}}\left[(MN)^{z}F_{N}^{(\rho')}({\textstyle \frac{1}{2}}-z)F_{M}^{(\rho)}({\textstyle \frac{1}{2}}-z)c_{1,1}^{(\rho',\rho)}(z;\psi^{\star},\psi^{\star\star})Z({\textstyle \frac{1}{2}}+z,{\textstyle \frac{1}{2}};\rho\psi^{\star},\rho'\psi^{\star\star})\right.\\
+(MN)^{z}F_{N}^{(\rho')}({\textstyle \frac{1}{2}}-z)G_{M}^{(\rho)}({\textstyle \frac{1}{2}}-z)c_{1,M}^{(\rho',\rho)}(z;\psi^{\star},\psi^{\star\star})Z({\textstyle \frac{1}{2}}+z,{\textstyle \frac{1}{2}};\rho\psi^{\star},\psi^{\star\star})\\
+M^{z}G_{N}^{(\rho')}({\textstyle \frac{1}{2}}-z)F_{M}^{(\psi_{1})}({\textstyle \frac{1}{2}}-z)F_{N}^{(\psi_{1})}({\textstyle \frac{1}{2}}-z)c_{N,1}^{(\rho',\psi_{1})}(z;\psi^{\star},\psi^{\star\star})Z({\textstyle \frac{1}{2}}+z,{\textstyle \frac{1}{2}};\psi^{\star},\rho'\psi^{\star\star})\\
+M^{z}G_{N}^{(\rho')}({\textstyle \frac{1}{2}}-z)F_{N}^{(\psi_{1})}({\textstyle \frac{1}{2}}-z)G_{M}^{(\psi_{1})}({\textstyle \frac{1}{2}}-z)c_{N,M}^{(\rho',\psi_{1})}(z;\psi^{\star},\psi^{\star\star})Z({\textstyle \frac{1}{2}}+z,{\textstyle \frac{1}{2}};\psi^{\star},\psi^{\star\star})\\
+M^{z}G_{N}^{(\rho')}({\textstyle \frac{1}{2}}-z)F_{M}^{(\psi_{1})}({\textstyle \frac{1}{2}}-z)G_{N}^{(\psi_{1})}({\textstyle \frac{1}{2}}-z)c_{N,N}^{(\rho',\psi_{1})}(z;\psi^{\star},\psi^{\star\star})Z({\textstyle \frac{1}{2}}+z,{\textstyle \frac{1}{2}};\psi^{\star},\rho'\rho\psi^{\star\star})\\
\left.+M^{z}G_{N}^{(\rho')}({\textstyle \frac{1}{2}}-z)G_{M}^{(\psi_{1})}({\textstyle \frac{1}{2}}-z)G_{N}^{(\psi_{1})}({\textstyle \frac{1}{2}}-z)c_{N,MN}^{(\rho',\psi_{1})}(z;\psi^{\star},\psi^{\star\star})Z({\textstyle \frac{1}{2}}+z,{\textstyle \frac{1}{2}};\psi^{\star},\rho\psi^{\star\star})\right],\label{eq:special-functional}
\end{multline}
where
\[
c_{n,m}^{(\chi,\chi')}(z;\psi^{\star},\psi^{\star\star})=a_{n}^{(\chi,\psi_{1})}({\textstyle \frac{1}{2}}-z;\psi^{\star})b_{m}^{(\chi',\psi^{\star})}({\textstyle \frac{1}{2}}-z;\psi^{\star\star}).
\]
We note that, in the case where $M=N\neq1$, we may not apply the
functional equation Theorem \ref{thm:functional-2} to $Z({\textstyle \frac{1}{2}}-z,{\textstyle \frac{1}{2}}+z;\psi^{\star},\rho')$
in (\ref{eq:special-functional-pre}) because Theorem \ref{thm:functional-2}
is only valid if $j=k=M=N$, but here we have $k=1\neq N$. Nonetheless,
equation (\ref{eq:special-functional}) still holds: Indeed, in the
case where $M=N$, the term with $Z({\textstyle \frac{1}{2}}-z,{\textstyle \frac{1}{2}}+z;\psi^{\star},\rho')$
of (\ref{eq:special-functional-pre}) vanishes because $G_{N}^{(\rho')}({\textstyle \frac{1}{2}-z)=0}$,
and so only the first term of (\ref{eq:special-functional}) will
remain (note also that $G_{M}^{(\rho)}({\textstyle \frac{1}{2}}-z)=0$
in this case).

Looking at (\ref{eq:F,G-coeff-def}), for a quadratic character $\chi^{\star}$
whose modulus is coprime to $P$, we see that
\begin{equation}
\label{eq:G-coeff-special}
\begin{array}{ccc}
{\displaystyle
F_{P}^{(\chi^{\star})}({\textstyle \frac{1}{2}}-z) = \frac{P^{-1}-1}{P^{-2z-1}-1}
}
&
\text{and}
&
{\displaystyle
G_{P}^{(\chi^{\star})}({\textstyle \frac{1}{2}}-z) = P^{z-1/2}\left(\frac{\chi^{\star}(P)(1-P^{-2z})}{P^{-2z-1}-1}\right).
}
\end{array}
\end{equation}
Therefore, setting
\begin{equation}
\Phi:=\lbrace(\rho,\rho'),\ (\rho,\psi_{1}),\ (\psi_{1},\rho'),\ (\psi_{1},\psi_{1}),\ (\psi_{1},\rho'\rho),\ (\psi_{1},\rho)\rbrace\label{eq:Phi-def}
\end{equation}
we have
\begin{equation}
Z({\textstyle \frac{1}{2}},{\textstyle \frac{1}{2}}-z;\rho,\rho')=\sum_{\substack{(\chi,\chi')\in\Phi\\
\psi,\psi'\in\widehat{(\mathbb{Z}/8\mathbb{Z})^{*}}
}
}\beta_{\psi,\psi'}^{(\chi,\chi')}\omega_{\psi,\psi'}^{(\chi,\chi')}(z)(\gamma_{\psi,\psi'}^{(\chi,\chi')})^{z}Z({\textstyle \frac{1}{2}}+z,{\textstyle \frac{1}{2}};\chi\psi,\chi'\psi'),\label{eq:functional-with-z}
\end{equation}
where we absorb the $F_{P}^{\chi^{\star}}({\textstyle \frac{1}{2}}-z)$
factors and parenthetical expression of (\ref{eq:G-coeff-special})
for the $G_{P}^{\chi^{\star}}({\textstyle \frac{1}{2}}-z)$ factors,
as well as the $c_{n,m}^{(\chi,\chi')}(z;\psi^{\star},\psi^{\star\star})$
factors into the $\omega_{\psi,\psi'}^{(\chi,\chi')}(z)$ functions,
and collect the remaining factors into the $\beta_{\psi,\psi'}^{(\chi,\chi')}$
and $\gamma_{\psi,\psi'}^{(\chi,\chi')}$ coefficients. Hence we see
that for $\Re z>0$, the $\omega_{\psi,\psi'}^{(\chi,\chi')}(z)$
functions are holomorphic satisfying the bound
\begin{equation}
\omega_{\psi,\psi'}^{(\chi,\chi')}(z)\ll\left(1+\vert\Im z\vert\right)^{\Re z}\label{eq:omega-bound}
\end{equation}
uniformly in $M$ and $N$. Thus, we obtain the upper bounds in Table \ref{tab:coeffs}.

\begin{table}[H]
\protect\caption{\label{tab:coeffs}Coefficient upper bounds}

\begin{tabular}{|c|c|c|c|c|c|c|c|}
\hline 
& $(\chi,\chi')$ & $(\rho,\rho')$ & $(\rho,\psi_{1})$ & $(\psi_{1},\rho')$ & $(\psi_{1},\psi_{1})$ & $(\psi_{1},\rho'\rho)$ & $(\psi_{1},\rho)$
\tabularnewline
\hline 
\multirow{2}{*}{$M\neq N$}
& $\beta_{\psi,\psi'}^{(\chi,\chi')}$ bound
& 1 & $M^{-1/2}$ & $N^{-1/2}$ & $M^{-1/2} N^{-1/2}$ & $N^{-1}$ & $M^{-1/2} N^{-1}$
\tabularnewline
\cline{2-8} 
& $\gamma_{\psi,\psi'}^{(\chi,\chi')}$ bound
& $MN$ & $M^2 N$ & $MN$ & $M^2 N$ & $MN^2$ & $M^2 N^2$
\tabularnewline
\hline 
\multirow{2}{*}{$M = N$}
& $\beta_{\psi,\psi'}^{(\chi,\chi')}$ bound
& 1 & 0 & 0 & 0 & 0 & 0
\tabularnewline
\cline{2-8} 
& $\gamma_{\psi,\psi'}^{(\chi,\chi')}$ bound
& $N^2$ & 0 & 0 & 0 & 0 & 0
\tabularnewline
\hline 
\end{tabular}
\end{table}

\subsection{Approximate functional equations}

The following lemma essentially takes the preceding functional equation
a step further by opening the first sum of $Z$.
\begin{lem}
\label{lem:approx-functional}There exist smooth, rapidly decaying
functions $V(\xi;t)$ and $V_{\psi,\psi'}^{(\chi,\chi')}(\xi;t)$
such that for any constant $X>0$ one has
\begin{multline*}
Z({\textstyle \frac{1}{2}},{\textstyle \frac{1}{2}}+it;\rho,\rho')=X^{-it}\sum_{\substack{(d,2MN)=1}
}\frac{L^{(2MN)}(\frac{1}{2},\chi_{d_{0}}\rho)\rho'(d)P_{d_{0},d_{1}}^{(\rho)}(\frac{1}{2})}{d{}^{1/2}}V\left(\frac{d}{X};t\right)\\
+X^{it}\sum_{\substack{(\chi,\chi')\in\Phi\\
\psi,\psi'\in\widehat{(\mathbb{Z}/8\mathbb{Z})^{*}}
}
}\beta_{\psi,\psi'}^{(\chi,\chi')}\sum_{\substack{(d,2MN)=1}
}\frac{L^{(2MN)}(\frac{1}{2},\tilde{\chi}_{d_{0}}\chi'\psi')\chi\psi(d)Q_{d_{0},d_{1}}^{(\chi'\psi')}(\frac{1}{2})}{d{}^{1/2}}V_{\psi,\psi'}^{(\chi,\chi')}\left(\frac{dX}{\gamma_{\psi,\psi'}^{(\chi,\chi')}};t\right),
\end{multline*}
where $\beta_{\psi,\psi}^{(\chi,\chi')}$ and $\gamma_{\psi,\psi'}^{(\chi,\chi')}$
satisfy the bounds listed in Table \ref{tab:coeffs}, and we have
the bounds
\[
\begin{array}{ccc}
V_{\psi,\psi'}^{(\chi,\chi')}\left(\xi;t\right) \ll \vert\xi\vert{}^{-B}(1+\vert t\vert)^{B}
&
\text{and}
&
V(\xi;t) \ll \vert\xi\vert{}^{-B}
\end{array}
\]
uniformly in $\xi$ and $t$ for any number $B>0$.\end{lem}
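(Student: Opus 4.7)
The plan is a Mellin-inversion argument, in the spirit of Theorem 5.3 of \cite{IK04}, adapted to our symmetric functional equation (\ref{eq:functional-with-z}). First choose an even entire multiplier $G(z)$ with $G(0)=1$, superpolynomial vertical decay, and enough zeros at $z=\tfrac12-it$ to cancel the (possibly double) pole of $Z(\tfrac12,\tfrac12+it+z;\rho,\rho')$ coming from the polar lines $w=1$ and $s+w=\tfrac32$ under the specialization $(s,w)=(\tfrac12,\tfrac12+it+z)$. For $c>1$ large, consider
\begin{equation*}
I(X;t):=\frac{1}{2\pi i}\int_{(c)}Z(\tfrac12,\tfrac12+it+z;\rho,\rho')\,G(z)\,X^{z}\,\frac{dz}{z}.
\end{equation*}
On this contour $Z$ is absolutely convergent as a Dirichlet series in $d$; interchanging sum and integral and pulling out $X^{-it}$ yields exactly the first sum of the lemma, with weight $V(\xi;t)=\xi^{-it}\cdot\frac{1}{2\pi i}\int_{(c)}G(z)\xi^{-z}\,dz/z$.

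Now shift the contour to $\Re z=-c'$ for some $c'>\tfrac{3}{4}$; by the choice of $G$, the only pole crossed is the simple pole of $1/z$ at the origin, whose residue equals $Z(\tfrac12,\tfrac12+it;\rho,\rho')$. On the new contour apply the symmetric functional equation (\ref{eq:functional-with-z}) with its variable identified as $u=-it-z$ (so $\Re u=c'>0$): the integrand becomes a sum over $(\chi,\chi')\in\Phi$ and $\psi,\psi'\in\widehat{(\mathbb{Z}/8\mathbb{Z})^{*}}$ of
\begin{equation*}
\beta_{\psi,\psi'}^{(\chi,\chi')}\,\omega_{\psi,\psi'}^{(\chi,\chi')}(-it-z)\,(\gamma_{\psi,\psi'}^{(\chi,\chi')})^{-it-z}\,Z(\tfrac12-it-z,\tfrac12;\chi\psi,\chi'\psi').
\end{equation*}
Since $(\Re(\tfrac12-it-z),\tfrac12)=(\tfrac12+c',\tfrac12)$ lies in the region $R_{1}^{(2)}$ of Lemma \ref{lem:Z-initial-continuation}, each inner $Z$ unfolds as its absolutely convergent $Q$-series in $d$. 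Interchanging sum and integral, substituting $z\mapsto -u$, and collecting the unimodular phases $X^{\pm it}$, $d^{\pm it}$, $\gamma^{\pm it}$ into either the $X^{it}$ prefactor or the weight produces the stated dual sum, with
\begin{equation*}
V_{\psi,\psi'}^{(\chi,\chi')}(\xi;t)\ \text{proportional to}\ \frac{1}{2\pi i}\int_{(c')}\omega_{\psi,\psi'}^{(\chi,\chi')}(-it+u)\,G(-u)\,\xi^{-u}\,\frac{du}{u}
\end{equation*}
up to a factor of modulus one depending on $t$ and $X$.

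The weight bounds follow by shifting these defining Mellin contours further to $\Re z=B$ (respectively $\Re u=B$) for arbitrary $B\geq 0$: the rapid vertical decay of $G$ dominates all polynomial growth in $\Im z$ and yields $V(\xi;t)\ll \xi^{-B}$ uniformly in $t$, while inserting the bound (\ref{eq:omega-bound}) gives $|\omega_{\psi,\psi'}^{(\chi,\chi')}(-it+u)|\ll(1+|t|+|\Im u|)^{B}$, producing the extra factor $(1+|t|)^{B}$ for $V_{\psi,\psi'}^{(\chi,\chi')}$. The main obstacle is the design of $G$: it must simultaneously annihilate the possibly double pole at $z=\tfrac12-it$, preserve $G(0)=1$, and retain enough vertical decay to dominate both the polynomial growth of $Z$ guaranteed by Proposition \ref{prop:continuation} and the $(1+|\Im z|)^{\Re z}$ growth of $\omega$ on every contour used; a product such as $(1-2z/(1-2it))^{2}e^{z^{2}}$ will suffice. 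The remaining work is careful bookkeeping of the phases $X^{\pm it}$, $d^{\pm it}$, and $\gamma^{\pm it}$ through the two Mellin inversions so that they recombine into precisely the prefactors $X^{\pm it}$ and the weight arguments $d/X$ and $dX/\gamma$ of the statement.
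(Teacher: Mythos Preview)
Your approach is essentially the paper's: both integrate $Z(\tfrac12,\tfrac12+it+z;\rho,\rho')$ against an entire weight equal to $1$ at $z=0$ with a double zero at $z=\tfrac12-it$, shift the contour past the origin, apply the symmetric functional equation (\ref{eq:functional-with-z}) on the dual side, and unfold via the $Q$-series expansion in $R_{1}^{(2)}$; the paper merely splits the weight as $H(z)\bigl(\tfrac{4^{1/2+it+z}-4}{4^{1/2+it}-4}\bigr)^{2}$ with $H$ even and rapidly decaying, a cosmetic difference from your single $G$. One small slip: you require $G$ even but your suggested example $(1-2z/(1-2it))^{2}e^{z^{2}}$ is not---this is harmless since your argument never actually invokes $G(-z)=G(z)$, so either drop that hypothesis or replace the polynomial factor by $\bigl(1-(2z/(1-2it))^{2}\bigr)^{2}$.
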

\begin{proof}
Let $B>0$, $H$ be an even, holomorphic function with $H(0)=1$ satisfying
the growth estimate $H(z)\ll_{\Re z,A}(1+\vert z\vert)^{-A}$ for any $A>0$. We consider the integral
\begin{equation}
I(c,X,t)=\frac{1}{2\pi i}\int_{(1)}X^{cz}\left(\frac{4^{\frac{1}{2}+it+cz}-4}{4^{\frac{1}{2}+it}-4}\right)^{2}Z({\textstyle \frac{1}{2}},{\textstyle \frac{1}{2}}+it+cz;\rho,\rho')H(z)\frac{dz}{z}\label{eq:I-def}
\end{equation}
for a real number $c$, a positive real number $X>0$, and a fixed
real number $t$. Examining the expression when $c=1$, the fraction
cancels the pole of the $Z$ factor at $z=1/2-it$. We apply a shift
of the contour to $\Re z=-1$, picking up the pole at $z=0$, whence
we obtain
\[
I(1,X,t)=Z({\textstyle \frac{1}{2}},{\textstyle \frac{1}{2}}+it;\rho,\rho')+\frac{1}{2\pi i}\int_{(-1)}X^{z}\left(\frac{4^{\frac{1}{2}+it+z}-4}{4^{\frac{1}{2}+it}-4}\right)^{2}Z({\textstyle \frac{1}{2}},{\textstyle \frac{1}{2}}+it+z;\rho,\rho')H(z)\frac{dz}{z}.
\]
We now apply a change of variables $z\mapsto-z$, arriving at 
\[
Z({\textstyle \frac{1}{2}},{\textstyle \frac{1}{2}}+it;\rho,\rho')=I(1,X,t)+I(-1,X,t).
\]
Applying the functional equation (\ref{eq:functional-with-z}) and
the switch of summation formula (\ref{eq:Building-block-sum-switch})
and expanding the $Z$ functions, definition (\ref{eq:Z-definition})
gives that $I(-1,X,t)$ equals
\begin{multline}
X^{it}\sum_{\substack{(\chi,\chi')\in\Phi\\
\psi,\psi'\in\widehat{(\mathbb{Z}/8\mathbb{Z})^{*}}
}
}\beta_{\psi,\psi'}^{(\chi,\chi')}
\sum_{\substack{(m,2MN)=1}
}\frac{L^{(2MN)}(\frac{1}{2},\tilde{\chi}_{m_{0}}\chi'\psi')(\chi\psi)(m)Q_{m_{0},m_{1}}^{(\chi'\psi')}(\frac{1}{2})}{m^{1/2}}V_{\psi,\psi'}^{(\chi,\chi')}\left(\frac{mX}{\gamma_{\psi,\psi'}^{(\chi,\chi')}};t\right),\label{eq:I(-1)}
\end{multline}
where
\[
V_{\psi,\psi'}^{(\chi,\chi')}(\xi;t)=\frac{1}{2\pi i}\int_{(1)}\left(\frac{4^{1/2+it-z}-4}{4^{1/2+it}-4}\right)^{2}\xi^{-z+it}\omega_{\psi,\psi'}^{(\chi,\chi')}(z-it)H(z)\frac{dz}{z}.
\]
We wish to obtain an upper bound for $V_{\psi,\psi'}^{(\chi,\chi')}(\xi;t)$.
Moving the contour to $B$ (recalling that there are no poles of $\omega_{\psi,\psi'}^{(\chi,\chi')}$
in this region), and bounding by taking the absolute value of the
integrand and using the bound (\ref{eq:omega-bound}), we have 
\[
V_{\psi,\psi'}^{(\chi,\chi')}(\xi;t)\ll\vert\xi\vert{}^{-B}(1+\vert t\vert)^{B}
\]
uniformly in $\xi$ and $t$. Changing the summation variable from
$m$ to $d$ in (\ref{eq:I(-1)}), we obtain the first term in the
statement of the lemma.

Looking at $I(1,X,t)$, we have
\[
I(1,X,t)=X^{-it}\sum_{\substack{(d,2MN)=1}
}\frac{L^{(2MN)}({\textstyle \frac{1}{2}},\chi_{d_{0}}\rho)\rho'(d)P_{d_{0},d_{1}}^{(\rho)}({\textstyle \frac{1}{2}})}{d{}^{1/2+it}}V\left(\frac{d}{X};t\right),
\]
where
\[
V(\xi;t)=\frac{1}{2\pi i}\int_{(1)}\left(\frac{4^{\frac{1}{2}+it+z}-4}{4^{1/2+it}-4}\right)^{2}\xi^{-z}H(z)\frac{dz}{z}.
\]
Also, it is immediate that we have the bound $V(\xi;t)\ll\vert\xi\vert{}^{-B}$ uniformly in $\xi$ and $t$.
\end{proof}
We now wish to truncate the sums above, accruing an error. This is
the object of the following lemma.
\begin{lem}
\label{lem:approx-functional-truncated}Let $A$ be a large positive
constant, $t$ be a real number, and $V(\xi;t)$ be a rapidly decaying
function in $\xi$ satisfying
\[
V(\xi;t)\ll\vert\xi\vert{}^{-B}(1+\vert t\vert)^{B},
\]
uniformly in $\xi$ and $t$ for any number $B>0$, let $\chi$ be
a character modulo $k$, and let $a$ be an arithmetic function satisfying $ a(d)\ll d^{\varepsilon} $ uniformly in $d$. Then we can truncate the double sum \textup{
\[
\sum_{\substack{(d,2MN)=1}
}\frac{L^{(2MN)}({\textstyle \frac{1}{2}},\chi_{d_{0}}\chi)\chi'(d)a(d)}{d{}^{1/2}}V\left(\frac{d}{Y};t\right)
\]
at $d<Y^{1+\varepsilon}$, accruing an error that is bounded above
by $ O((1+\vert t\vert)^{2A/\varepsilon}k^{1/4+\varepsilon}Y^{-A}) $.
}\end{lem}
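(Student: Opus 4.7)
The approach is to absorb the tail $d \geq Y^{1+\varepsilon}$ into the error via the rapid decay of $V$, using the fourth moment bound of Theorem \ref{thm:heath-brown-estimate-fourth} to control $L$-values on average. Concretely, $V(\xi;t) \ll |\xi|^{-B}(1+|t|)^B$ for arbitrary $B > 0$ together with $|a(d)| \ll d^\varepsilon$ bounds the discarded tail by a constant times
\[
(1+\vert t\vert)^B\, Y^B \sum_{\substack{(d,2MN)=1 \\ d \geq Y^{1+\varepsilon}}} \frac{|L^{(2MN)}(\tfrac{1}{2},\chi_{d_0}\chi)|}{d^{\,B+1/2-\varepsilon}}.
\]
I would split the remaining $d$-sum by writing $d = d_0 d_1^2$ with $d_0$ squarefree, turning it into an outer sum over $d_1$ with weight $d_1^{-(2B+1-2\varepsilon)}$ times an inner sum over squarefree $d_0 \geq \max(1,\,Y^{1+\varepsilon}/d_1^2)$ weighted by $d_0^{-(B+1/2-\varepsilon)}$.

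The inner sum I would treat dyadically. For a range $D \leq d_0 < 2D$, H\"older's inequality with exponents $4/3$ and $4$ combined with Theorem \ref{thm:heath-brown-estimate-fourth} at $s=1/2$ gives
\[
\sum_{\substack{D \leq d_0 < 2D \\ d_0 \text{ odd sqfree}}} |L^{(2MN)}(\tfrac{1}{2},\chi_{d_0}\chi)| \ll D^{3/4}(Dk)^{1/4+\varepsilon} = D^{1+\varepsilon} k^{1/4+\varepsilon},
\]
where the passage from $L$ to $L^{(2MN)}$ costs only a harmless $(MN)^\varepsilon$. Dividing by $D^{B+1/2-\varepsilon}$ produces a geometric progression in $D$ (convergent as long as $B > 1/2 + O(\varepsilon)$), so summing from $D = Y^{1+\varepsilon}/d_1^2$ gives the inner bound $k^{1/4+\varepsilon}(Y^{1+\varepsilon}/d_1^2)^{1/2-B+2\varepsilon}$ in the main range $d_1 \leq Y^{(1+\varepsilon)/2}$; the complementary range $d_1 > Y^{(1+\varepsilon)/2}$, where the lower cutoff collapses to $d_0 \geq 1$, produces a strictly smaller contribution.

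Reassembling, the outer $d_1$-weight cancels against the $d_1$-power in the inner bound up to a convergent factor $\sum_{d_1} d_1^{-(2+2\varepsilon)} \ll 1$, so the tail is at most
\[
(1+\vert t\vert)^B\, k^{1/4+\varepsilon}\, Y^{\,B + (1+\varepsilon)(1/2-B+2\varepsilon)} \ll (1+\vert t\vert)^B\, k^{1/4+\varepsilon}\, Y^{1/2 - B\varepsilon + O(\varepsilon)}.
\]
Choosing $B = 2A/\varepsilon$ makes the exponent of $Y$ at most $-A$ (for $A$ large relative to $\varepsilon$) while producing exactly the $(1+|t|)^{2A/\varepsilon}$ factor in the claimed error. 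The only substantive input is the fourth moment bound; the rest is bookkeeping of exponents in the dyadic and $d_1$-sums, and the one point to verify carefully is the convergence of the geometric dyadic sum in $D$, which is automatic once $B$ is taken large in terms of $A/\varepsilon$.
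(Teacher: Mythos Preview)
Your argument is correct, but it differs from the paper's route in one essential input. The paper does not appeal to the fourth moment bound at all here: it simply bounds each $L$-value pointwise by the convexity estimate $L^{(2MN)}(\tfrac12,\chi_{d_0}\chi)\ll (d_0 k)^{1/4+\varepsilon}$ from (\ref{eq:L-bounds}) and then sums $\sum_{d>Y^{1+\varepsilon}} (d_0 k)^{1/4+\varepsilon} d^{-1/2+\varepsilon}(d/Y)^{-B}$ directly, choosing $B$ in terms of $A/\varepsilon$. Your use of H\"older with Theorem~\ref{thm:heath-brown-estimate-fourth}, followed by a dyadic decomposition in $d_0$ and a split of the $d_1$-range, recovers exactly the same average size $\sum_{d_0\le D}|L(\tfrac12,\chi_{d_0}\chi)|\ll D^{1+\varepsilon}k^{1/4+\varepsilon}$ that the pointwise convexity bound already gives term by term, so the final exponents coincide. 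The advantage of the paper's approach is brevity: one line of convexity replaces your dyadic bookkeeping. The advantage of yours is that it would survive if only an average bound were available; but since convexity suffices and the tail is being thrown away anyway, the extra machinery buys nothing here.
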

\begin{proof}
The $L$-function is bounded asymptotically by $(d_{0}k)^{1/4+\varepsilon}$
due to the Phragm\'en-Lindelöf convexity bound (\ref{eq:L-bounds}).
The $V$ factor is bounded by its argument to an arbitrarily large
power $-B$. Applying this gives an error that is bounded above by
\[
(1+\vert t\vert)^{2B}\sum_{d>P^{1+\varepsilon}}\frac{(d_{0}k)^{1/4+\varepsilon}}{d^{1/2-\varepsilon}}\left(\frac{d}{Y}\right)^{-B},
\]
and the result follows.
\end{proof}
In order to bound $Z({\textstyle \frac{1}{2}},{\textstyle \frac{1}{2}}+it;\rho,\rho')$,
by applying a smooth partition of unity as in \cite{blomer}, it now
suffices to bound
\[
D_{W,a}(Y;t,\chi'\psi',\chi\psi):=\sum_{\substack{(d,2MN)=1}
}\frac{L^{(2MN)}({\textstyle \frac{1}{2}},\chi_{d_{0}}\chi'\psi')\chi\psi(d)a(d)}{d{}^{1/2}}W\left(\frac{d}{Y};t\right)
\]
for $t$ a real number, $\psi,\psi'\in\widehat{(\mathbb{Z}/8\mathbb{Z})^{*}}$,
a smooth function $W$ with support on $[1,2]$ satisfying
\[
W(x;t)\ll_{B}x{}^{-B}(1+\vert t\vert)^{B}
\]
uniformly in $x$ and $t$ for any $B>0$, an arithmetic function
$a$ satisfying the bound $a(d)\ll d^{\varepsilon}$, and the following
conditions, according to each of the two sums in Lemma \ref{lem:approx-functional}:
either $(\chi,\chi')\in\Phi$ (cf. (\ref{eq:Phi-def})) with conductors
$k$ and $j$ respectively, and
\[
1\leq Y\leq\left(\gamma_{\psi,\psi'}^{(\chi,\chi')} X^{-1}\right)^{1+\varepsilon},
\]
or $(\chi,\chi')=(\rho',\rho)$ and
\[
1\leq Y\leq X^{1+\varepsilon}.
\]
Expanding according to the Dirichlet functional equation, and further
truncating that sum expresses $D_{W,a}(Y;t,\chi',\chi)$ as a double
finite character sum, allowing us to apply Heath-Brown's large sieve
estimate Corollary \ref{cor:heath-brown-large-sieve}. The result
of this is the following lemma.
\begin{lem}
\label{lem:D-bound} We have the bound
\[
D_{W,a}(Y;t,\chi',\chi)\ll(1+\vert t\vert)^{2/\varepsilon}(MN)^{\varepsilon}\left(Y^{1+\varepsilon}+(Yj){}^{1/2+\varepsilon}\right)^{1/2+\varepsilon}
\]
 uniformly in $t$, $Y$, $j$, and $k$.\end{lem}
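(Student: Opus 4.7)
The plan is to open the $L$-factor via the approximate functional equation \eqref{eq:L-approx-functional} for $L(\frac{1}{2}, \chi_{d_0}\chi'\psi')$, producing a double sum over $d$ and $n$ weighted by the Jacobi symbol $\chi_{d_0}(n) = (d_0/n)$, and then to invoke Heath-Brown's large sieve (Corollary \ref{cor:heath-brown-large-sieve}) on the resulting bilinear form. Note that $\chi'$ has conductor $j$ dividing $MN$, $\psi'$ has conductor dividing $8$, and $d_0$ is coprime to $2MN$, so the hypotheses of \eqref{eq:L-approx-functional} are satisfied with $q = j$.

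After substituting and swapping summation, I obtain
\[
D_{W,a}(Y;t,\chi',\chi) = 2 \sum_n \frac{(\chi'\psi')(n)}{n^{1/2}} \sum_d \frac{\chi\psi(d) a(d)}{d^{1/2}} \chi_{d_0}(n) W(d/Y; t) G_\kappa\!\left(\frac{n}{\sqrt{c_0 d_0 j}}\right) E(d),
\]
where $E(d)$ encodes the bounded Euler factor relating $L^{(2MN)}$ to $L$. The rapid decay bound \eqref{eq:G-bound} on $G_\kappa$ effectively truncates the $n$-sum at $n \leq (Yj)^{1/2+\varepsilon}$ with super-polynomially small tail, while $W$ restricts $d$ to $[Y,2Y]$ with $W(d/Y;t) \ll (1+|t|)^{2/\varepsilon}$. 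I decompose $d = d_0 d_1^2$ with $d_0$ squarefree, treat $d_1 \leq \sqrt{2Y}$ as a parameter, and decompose $d_0$ and $n$ dyadically. For each dyadic block Corollary \ref{cor:heath-brown-large-sieve} (with $P$ the dyadic size of $n$ and $Q$ that of $d_0$, after checking that the normalized coefficients satisfy $\ll x^{-1/2+\varepsilon}$) yields a bound of size $(P+Q)^{1/2+\varepsilon}$. Summing dyadically produces
\[
\sum_{d_1 \leq \sqrt{Y}} \frac{1}{d_1} \left(\frac{Y}{d_1^2} + \frac{\sqrt{Yj}}{d_1}\right)^{1/2+\varepsilon} \ll Y^{1/2+\varepsilon} + (Yj)^{1/4+\varepsilon},
\]
the outer sum converging absolutely because the exponents of $d_1$ exceed $1$, which matches the target $(Y^{1+\varepsilon} + (Yj)^{1/2+\varepsilon})^{1/2+\varepsilon}$.

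The main obstacle is the coupling of $n$ and $d_0$ inside $G_\kappa(n/\sqrt{c_0 d_0 j})$, which a priori prevents the sum from being a genuine bilinear form in separated variables. I handle this via the Mellin representation \eqref{eq:G-def}: shifting the defining contour to $\Re s = \varepsilon$ factorizes the weight as $(c_0 d_0 j)^{s/2} \cdot n^{-s}$ times a Gamma-function integrand that decays exponentially in $|\Im s|$ by Stirling's bound \eqref{eq:stirling}. On this contour the perturbed coefficients still satisfy the size hypotheses $\alpha_{d_0} \ll d_0^{-1/2+\varepsilon}$ and $\beta_n \ll n^{-1/2+\varepsilon}$ required by Corollary \ref{cor:heath-brown-large-sieve}, and the $\Im s$-integration contributes only a convergent factor absorbed into $(MN)^\varepsilon$.
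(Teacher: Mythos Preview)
Your proposal is correct and follows essentially the same route as the paper: open $L(\tfrac12,\chi_{d_0}\chi'\psi')$ with the approximate functional equation~\eqref{eq:L-approx-functional}, truncate the $n$-sum using the decay of $G_\kappa$, separate the $(n,d_0)$-coupling in $G_\kappa$ via its Mellin representation~\eqref{eq:G-def}, and then invoke Corollary~\ref{cor:heath-brown-large-sieve}. The only organizational difference is that you extract the factor $d_1$ from $d=d_0d_1^2$ by hand and run a dyadic decomposition over $d_0$ and $n$, whereas the paper applies Corollary~\ref{cor:heath-brown-large-sieve} directly to the $d$-sum (the squarefree-part structure is already built into that corollary) with $P=(Yj)^{1/2+\varepsilon}$ and $Q=Y^{1+\varepsilon}$; both yield the same bound.
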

\begin{proof}
Applying Lemma \ref{lem:approx-functional-truncated} above, we have
\begin{multline*}
D_{W,a}(Y;t,\chi',\chi)=\sum_{\substack{(d,2MN)=1\\
d<Y^{1+\varepsilon}
}
}\frac{L^{(2MN)}(\frac{1}{2},\chi_{d_{0}}\chi)\chi'(d)a(d)}{d{}^{1/2}}W\left(\frac{d}{Y};t\right)+O((1+\vert t\vert)^{2/\varepsilon}j^{1/4+\varepsilon}Y^{-1}).
\end{multline*}
Applying (\ref{eq:L-remove-primes}) and (\ref{eq:L-approx-functional}),
we have that $ D_{W,a}(Y;t,\chi',\chi) $ equals, up to an error of $ O((1+\vert t\vert)^{2/\varepsilon}j^{1/4+\varepsilon}Y^{-1}) $,
\begin{multline}
2\sum_{\substack{(d,2MN)=1\\
d<Y^{1+\varepsilon}
}
}\frac{\chi'(d)}{d^{1/2}}\prod_{p\mid2MN}\left(1-\frac{(\chi_{d_{0}}\chi)(p)}{p^{1/2}}\right)a(d)W\left(\frac{d}{Y};t\right)
\sum_{n=1}^{\infty}\frac{(\chi\chi_{d_{0}})(n)}{n^{1/2}}G_{\kappa_{\chi'}}\left(\frac{n}{\sqrt{c_{0}d_{0}j}}\right),\label{eq:D-1}
\end{multline}
where $c_{0}$ is given in (\ref{eq:Lfunctional-kappa,delta0-1})
and $G_{\kappa}$ is given in (\ref{eq:G-def}). Because of the rapid
decay of $G_{\kappa}$ and $W$, we can truncate the $n$-sum at $n<\left(Yj\right){}^{1/2+\varepsilon}$.

Applying the bounds (\ref{eq:G-bound}) and $a(d)\ll d^{\varepsilon}$,
the error obtained by this is bounded by
\[
(MN)^{\varepsilon}(1+\vert t\vert)^{B}\sum_{d<Y^{1+\varepsilon}}\frac{1}{d^{1/2-\varepsilon}}\left(1+\frac{d}{Y}\right)^{-B}\sum_{n>(Yj)^{1/2+\varepsilon}}\frac{1}{n^{1/2}}\left(1+\frac{n}{\sqrt{c_{0}d_{0}j}}\right)^{-B}
\]
for any large positive number $B$. Indeed, this is bounded above
by
\begin{eqnarray*}
 &  & (MN)^{\varepsilon}(1+\vert t\vert)^{B}Y^{B}j^{B/2}\sum_{d<Y^{1+\varepsilon}}d^{-B-1/2+\varepsilon}\sum_{n>(Yj)^{1/2+\varepsilon}}n^{-B-1/2}d^{B/2}\\
 & \asymp & (MN)^{\varepsilon}(1+\vert t\vert)^{B}Y^{B}j^{B/2}(Y^{1+\varepsilon})^{-B/2+1/2+\varepsilon}(Yj)^{(1/2+\varepsilon)(-B+1/2)}.
\end{eqnarray*}

We see this is bounded above by $(MN)^{\varepsilon}(1+\vert t\vert)^{B}(Yj)^{-\varepsilon B+1}$.
We choose $B$ large enough so that $\varepsilon B-1\geq1$, and so
we can choose $B=2/\varepsilon$. Thus, we have that $ D_{W,a}(Y;t,\chi',\chi) $ equals
\begin{multline*}
2\sum_{\substack{(d,2MN)=1\\
d<Y^{1+\varepsilon}
}
}\prod_{p\mid2MN}\left(1-\frac{(\chi_{d_{0}}\chi)(p)}{p^{1/2}}\right)
\sum_{\substack{n\leq(Yj)^{1/2+\varepsilon}}
}\frac{(\chi\chi_{d_{0}})(n)\chi'(d)}{d^{1/2}n^{1/2}}a(d)W\left(\frac{d}{Y};t\right)G_{\kappa_{\chi'}}\left(\frac{n}{\sqrt{c_{0}d_{0}j}}\right)\\
+O((MN)^{\varepsilon}(1+\vert t\vert)^{2/\varepsilon}(Yj){}^{-1})+O((1+\vert t\vert)^{2/\varepsilon}j^{1/4+\varepsilon}Y^{-1}).
\end{multline*}
We wish to apply Heath-Brown's large sieve Corollary \ref{cor:heath-brown-large-sieve}
to the double sum, but to do this, we need to separate the $n$ and
$d_{0}$ dependence on the $G$ function. Hence, we apply Mellin inversion
to render the main term above as
\begin{multline*}
2\int_{(\varepsilon)}\int_{(\varepsilon)}\tilde{G}_{\kappa_{\chi'}}(s)j^{s/2}\sum_{\substack{d<Y^{1+\varepsilon}}
}\prod_{p\mid2MN}\left(1-\frac{(\chi_{d_{0}}\chi)(p)}{p^{1/2}}\right) \\
\times \sum_{\substack{n<(Yj)^{1/2+\varepsilon}}
}(c_{0}d_{0})^{s/2}a(d)\frac{(\chi\chi_{d_{0}})(n)\chi'(d)}{d^{1/2+w}n^{1/2+s}}\tilde{W}(w;t)Y^{w}dwds.
\end{multline*}
Looking at the summand, for $\Re s=\varepsilon$ and $\Re w=\varepsilon$,
we have
\[
\begin{array}{ccc}
{\displaystyle
(c_{0}d_{0})^{s/2}a(d)\frac{\chi'(d)}{d^{1/2+w}}\ll d^{-1/2+\varepsilon}
}
&
\text{and}
&
{\displaystyle
\frac{\chi(n)}{n^{1/2+s}}\ll n^{-1/2+\varepsilon},
}
\end{array}
\]
so that we can now apply Corollary \ref{cor:heath-brown-large-sieve},
and the result follows.
\end{proof}

\section{Proof of Theorem \ref{thm:subconvexity}}

We first apply Lemma \ref{lem:approx-functional} which gives
\[
Z({\textstyle \frac{1}{2}},{\textstyle \frac{1}{2}}+it;\rho,\rho')=X^{it}\sum_{\substack{(\chi,\chi')\in\Phi\\
\psi,\psi'\in\widehat{(\mathbb{Z}/8\mathbb{Z})^{*}}
}
}\beta_{\psi,\psi'}^{(\chi,\chi')}D_{V,Q}\Biggl(\frac{\gamma_{\psi,\psi'}^{(\chi,\chi')}}{X};t,\chi',\chi\Biggl)+X^{-it}D_{V,P}(X;t,\rho,\rho'),
\]
where the subscripts for $D$ in the first term are $V=V_{\psi,\psi'}^{(\chi,\chi')}(\xi,t)$
and $Q=Q_{d_{0},d_{1}}^{(\chi'\psi')}({\textstyle \frac{1}{2}})$,
and the subscripts for $D$ in the second term are $V=V(\xi,t)$ and
$P=P_{d_{0},d_{1}}^{(\chi\psi)}({\textstyle \frac{1}{2}})$. Applying
Lemma \ref{lem:D-bound} further gives
\begin{multline*}
Z({\textstyle \frac{1}{2}},{\textstyle \frac{1}{2}}+it,\rho,\rho')\ll(1+\vert t\vert)^{2/\varepsilon}(MN)^{\varepsilon}\left[\left(X+(NX)^{1/2}\right)^{1/2+\varepsilon}\right]\\
+(1+\vert t\vert)^{2/\varepsilon}(MN)^{\varepsilon}\max_{\substack{(\chi,\chi')\in\Phi\\
\psi,\psi'\in\widehat{(\mathbb{Z}/8\mathbb{Z})^{*}}
}
}\left[\beta_{\psi,\psi'}^{(\chi,\chi')}\left(\gamma_{\psi,\psi'}^{(\chi,\chi')}X^{-1}+\left(\gamma_{\psi,\psi'}^{(\chi,\chi')}j X^{-1} \right)^{1/2}\right)^{1/2+\varepsilon}\right].
\end{multline*}
Here, $\Phi$ is given by (\ref{eq:Phi-def}), and $j$ is the conductor
of $\chi'$.

We initially set $X=M^{a}N^{b}$ and eventually choose optimized values
for $a$ and $b$, depending on the bounds for the $\beta$'s and
$\gamma$'s from Table \ref{tab:coeffs}. We come to an optimal choice
of $X=M^{2/3}N^{1/3}$ and obtain the subconvexity bound of 
\[
M^{1/3+\varepsilon}N^{1/6+\varepsilon}+M^{1/6+\varepsilon}N^{1/3+\varepsilon}\asymp(MN(M+N)){}^{1/6+\varepsilon}
\]
 (even in the case of $M=N$), compared to the convexity bound of
$(MN)^{3/8+\varepsilon}$.

\section{\label{sec:Non-vanishing}Proof of Theorem \ref{thm:nonvanishing}}

Here we present an application of the theory of double Dirichlet series as developed. Given a fixed positive prime $N$, we seek an upper bound on $d$ such that $L(1/2,\chi_{dN})$ does not vanish. We follow a modified version of a method outlined in \cite{hk}, and we rigorously prove an important lower bound on coefficients which arise from a residue of $Z$. Let $h(y)$ be a smooth weight function as in Definition \ref{def:smooth-weight-function}.
Expanding as per (\ref{eq:Z-definition}) and by Mellin inversion
we have
\begin{equation}
\label{eq:int-sum}
\int_{(2)}\tilde{h}(w)Z({\textstyle \frac{1}{2}},w;\chi_{N},\psi_{1})X^{w}dw
= \sum_{\substack{(d,2N)=1}
}L^{(2N)}({\textstyle \frac{1}{2}},\chi_{d_{0}N})P_{d_{0},d_{1}}^{(\chi_{N})}({\textstyle \frac{1}{2}})h({\textstyle \frac{d}{X}}).
\end{equation}

We move the contour of the integral on the left-hand side to $\Re w=-\varepsilon$,
picking up a residue at $w=1$ due to the double pole of $Z({\textstyle \frac{1}{2}},w;\chi_{N},\psi_{1})$
there. If we write its Laurent expansion as
\[
Z(1/2,w;\chi_{N},\psi_{1})=\frac{\mu_{N}}{(w-1)^{2}}+\frac{\nu_{N}}{(w-1)}+\cdots
\]
then the left-hand side of (\ref{eq:int-sum}) equals
\[
[\nu_{N}\tilde{h}(1)+\mu_{N}\tilde{h}'(1)]X+\mu_{N}\tilde{h}(1)X\log X
+\int_{(-\varepsilon)}\tilde{h}(w)Z({\textstyle \frac{1}{2}},w;\chi_{N},\psi_{1})X^{w}dw.
\]
We now apply the symmetric functional equation (\ref{eq:functional-3}) with $\rho\psi=\chi_N$, $\rho'=\psi'=\psi_1$, and $(s,w)$=$(1/2,w)$ with $\Re w=-\varepsilon$ to $Z$ in the resulting integral. Using Lemma \ref{lem:functional-coefficients} to bound the coefficients, we see that $C_{\rho\tilde{\chi}_n}=N$ when $n=1$ and is unity otherwise, and $C_{\rho'\tilde{\chi}_m}^{1/2-s}=1$ in every case because $s=1/2$. In every case the $A$ factors will always be bounded above by $O(N^\varepsilon)$, since $\Re w = -\varepsilon$. Also, $ A_m^{(\rho \tilde{\chi}_n)}(s+w-1/2) $ vanishes when $m=N$ and $n=1$. Hence, we have
\begin{multline*}
	Z(1/2,w;\chi_N,\psi_1)
	=\sum_{\psi^{\star}, \psi^{\star\star}, \psi^{\star\star\star}\in\widehat{(\mathbb{Z}/8\mathbb{Z})^{*}}}
	\sum_{m,r\mid N} 
	B(w;N,m,r,\psi^\star,\psi^{\star\star},\psi^{\star\star\star})
	\\
	\times
	\left(	
	N^{1/2-w} Z(1-w,1/2;\chi_N \tilde{\chi}_r \psi^\star \psi^{\star\star\star}, \psi^{\star\star})
	+ Z(1-w,1/2;\tilde{\chi}_r \psi^\star \psi^{\star\star\star}, \tilde{\chi}_m\psi^{\star\star})
	\right),
\end{multline*}
where, due also to the bound (\ref{eq:functional-3-coeffs-bound}), we have
\[
	\vert
		B(w;N,m,r,\psi^\star,\psi^{\star\star},\psi^{\star\star\star})
	\vert
	\ll
	N^\varepsilon (1+\vert w \vert)^{1+\varepsilon}.
\]
We now apply the bound (\ref{eq:convex-trivial-2}), which finally gives
\[
\vert Z({\textstyle \frac{1}{2}},-\varepsilon+it;\chi_{N},\psi_{1}) \vert
\ll
(1+\vert t\vert)^{1+\varepsilon} N^{1/2+\varepsilon},
\]
which we shall apply to the integral. We thus have
\begin{equation}
S(X;\chi_{N}):=\sum_{\substack{(d,2N)=1}
}L^{(2N)}({\textstyle \frac{1}{2}},\chi_{d_{0}N})P_{d_{0},d_{1}}^{(\chi_{N})}({\textstyle \frac{1}{2}})h({\textstyle \frac{d}{X}})=a_{N}X\log X+b_{N}X+O(N^{1/2+\varepsilon})\label{eq:S-asymptotic-arbitrary-main-term}
\end{equation}
for certain coefficients $a_{N}$ and $b_{N}$. More elementary analysis
of $S(X;\chi_{N})$ via Theorem \ref{thm:L-moment-asymptotic} (cf.
\S\ref{sec:L-function-sum-asymptotic}) below gives us the lower bounds
$a_{N},b_{N}\gg N^{-\varepsilon}$. If we now assume that 
\[
L^{(2N)}({\textstyle \frac{1}{2}},\chi_{dN})=L({\textstyle \frac{1}{2}},\chi_{d_{0}N})\prod_{p\mid2d_{1}N}(1-p^{-1/2})
\]
 vanishes for $d\ll X$, then we get a contradiction as long as $a_{N}X\log X$
is greater than the error term. We see we can choose $X=N^{1/2+\varepsilon}$,
as required.

We essentially combine two asymptotic formulas for $S(X;\chi_{N})$:
Looking at it elementarily via Theorem \ref{thm:L-moment-asymptotic}
from \S\ref{sec:L-function-sum-asymptotic} gives us a bad error term,
but lower bounds on the coefficients. Looking at it analytically as
above allows us to take advantage of the bound (\ref{eq:convex-trivial-2})
for $Z$ in order to obtain a smaller
error term.

\section{\label{sec:L-function-sum-asymptotic}An Asymptotic Formula for an $L$-function sum}

\subsection{Result}

Of particular importance for the smallest nonvanishing quadratic central
value of twisted $L$-functions result is an asymptotic formula for
the weighted and twisted $L$-function sum given by $S(X;\chi)$ in
(\ref{eq:S-asymptotic-arbitrary-main-term}). Indeed, we already have
an asymptotic formula, but it is important that we have a lower bound
on the main term coefficients.

Let $N$ be a natural number, $X$ a large positive real number, and
let $\chi$ be a quadratic primitive character modulo $N$. Let $h$
be a smooth weight function as defined in Definition \ref{def:smooth-weight-function},
and define
\[
S(X;\chi)=\sum_{\substack{(d,2N)=1}
}L^{(2N)}({\textstyle \frac{1}{2}},\chi_{d_{0}}\chi)P_{d_{0},d_{1}}^{(\chi)}({\textstyle \frac{1}{2}})h(d/X).
\]
We seek to obtain information on the main term coefficients of the
asymptotic formula (\ref{eq:S-asymptotic-arbitrary-main-term}). We
shall prove the following Theorem.
\begin{thm}
\label{thm:L-moment-asymptotic}There exists $\delta>0$ such that
we have the asymptotic formula
\[
S(X;\chi)=a_{N}X\log X+b_{N}X+O(N^{3/8+\varepsilon}X^{1-\delta}),
\]
where 
\[
N^{-\varepsilon}\ll a_{N},b_{N}\ll N^{\varepsilon},
\]
uniformly in $N$.
\end{thm}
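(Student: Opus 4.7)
The plan is to establish the asymptotic formula via the approximate functional equation for $L$-functions, proceeding in the spirit of Jutila's treatment of moments of quadratic twists. First, I would apply (\ref{eq:L-approx-functional}) to each factor $L^{(2N)}({\textstyle \frac{1}{2}},\chi_{d_0}\chi)$ inside $S(X;\chi)$, expanding it as a weighted sum over $n$ with the weight function $G_\kappa\!\left(\frac{n}{\sqrt{c_0 d_0 N}}\right)$. The rapid decay (\ref{eq:G-bound}) allows truncation at $n \leq (d_0 N)^{1/2+\varepsilon}$ with negligible error, and using $P^{(\chi)}_{d_0,d_1}(1/2) \ll d_1^\varepsilon$ together with the smooth weight $h(d/X)$ supported on $[1,2]$ gives a manageable double sum over $(d,n)$.

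Next, I would interchange the orders of summation and decompose the $n$-sum according to the squarefree part of $n$: write $n = \ell^2 m$ with $m$ squarefree, and separate the contribution of $m=1$ (the \emph{diagonal}) from $m > 1$ (the \emph{off-diagonal}). On the diagonal, $\chi_{d_0}(\ell^2) = 1$ whenever $(\ell, d_0 N) = 1$, so the inner $d$-sum reduces to a smoothly weighted sum of $h(d/X)$ against a slowly varying arithmetic factor. Applying Mellin inversion and writing the associated Dirichlet series (in $d$ against $\tilde h$) in terms of $\zeta(s)$ times an Euler product local at primes dividing $2nN$, one finds a double pole at $s=1$ whose residue contributes $a_N X \log X + b_N X$; the coefficients are expressible as explicit Euler products, and the factor $\prod_{p \mid 2N}(1-p^{-1})$-type behavior yields the lower bound $a_N, b_N \gg N^{-\varepsilon}$ by a direct estimate (the matching upper bound $\ll N^\varepsilon$ is immediate).

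For the off-diagonal, $\chi_{d_0}(m)$ is a nontrivial quadratic character in $d_0$, and one must exhibit cancellation in $\sum_{d} \chi_{d_0}(m) P^{(\chi)}_{d_0,d_1}(1/2) h(d/X)$. Here I would apply Poisson summation on $d_0$ modulo $4m$ (decomposing $d = d_0 d_1^2$ with $d_1 \leq X^{1/2}$ handled separately), reducing the problem to a twisted Gauss sum times a short completed sum, and then bound dyadically in $m$ and $d_1$ using the truncation $n \leq (d_0 N)^{1/2+\varepsilon}$. Summing all such contributions should yield an error of size $O(N^{3/8+\varepsilon} X^{1-\delta})$ for a positive $\delta$; the $3/8$ exponent matches what one obtains by pairing a fourth-moment (convexity-on-average) bound against a square-root-cancellation estimate from Heath-Brown's large sieve (Theorem \ref{thm:HeathBrown-Theorem-1}).

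The main obstacle is the off-diagonal analysis: extracting a power saving $X^{1-\delta}$ while simultaneously keeping the $N$-aspect at $N^{3/8+\varepsilon}$ requires treating the interaction between the $P$-factor, the $G_\kappa$ weight, and the character sum carefully, since naive term-by-term bounds would only give $O((NX)^{1/2+\varepsilon})$. The key input is the large sieve bound, which is what reduces the $X$-exponent below $1$ after one balances the parameters in the Poisson-transformed sum.
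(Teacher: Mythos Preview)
Your overall strategy---apply the approximate functional equation for $L(\tfrac12,\chi_{d_0}\chi)$, split the $n$-sum into a diagonal piece (squarefree part of $n$ equal to $1$) and an off-diagonal piece, and extract $a_N X\log X + b_N X$ from the diagonal via Mellin inversion and a double-pole residue---matches the paper's argument closely. Your account of why $a_N,b_N$ lie between $N^{-\varepsilon}$ and $N^{\varepsilon}$, coming from explicit Euler products with local factors at primes dividing $2N$, is also correct in spirit; the paper carries this out via Lemmas~\ref{lem:Euler-E-composition}, \ref{lem:H-bounds}, and~\ref{lem:E-bounds}.

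Where you diverge from the paper is the off-diagonal treatment. You propose Poisson summation in $d_0$ modulo $4m$ followed by Heath-Brown's large sieve, and you explain the target exponent $N^{3/8}$ as arising from a fourth-moment input balanced against square-root cancellation. The paper does something much simpler: it writes the $d_0$-sum as
\[
T(s;Y,\psi)=\sum_{d_0}\mu^2(d_0)\psi(d_0)d_0^{s/2}h(d_0/Y),
\]
expresses this by Mellin inversion as an integral involving $L(z-s/2,\psi)/L(2z-s,\psi^2)$, and applies nothing more than the \emph{individual} convexity bound for $L(\cdot,\psi)$ (Lemma~\ref{lem:T-asymptotics}). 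Placing the $s$-contour at $\Re s=3/4+\varepsilon$, the factor $(c_0 N)^{s/2}$ coming from the weight $G_\kappa(n/\sqrt{c_0 d_0 N})$ already produces $N^{3/8+\varepsilon}$, and the $T$-bound yields $(X/d_1^2)^{7/8+\varepsilon}$, giving $\delta=1/8$ with no Poisson summation, no large sieve, and no moment estimate.

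Your route could plausibly be made to work, but the explanation you give for the exponent $N^{3/8+\varepsilon}$ is speculative rather than derived, and executing Poisson summation with the correction polynomial $P_{d_0,d_1}^{(\chi)}(\tfrac12)$ and the $G_\kappa$-weight still in the sum is considerably more labor than what the theorem requires. The paper's method buys simplicity and a clean value of $\delta$; your method, fully carried out, might allow sharper inputs (subconvexity for $L(\cdot,\tilde\chi_m)$, or genuine bilinear cancellation) to improve the error term, but that refinement is not needed for the statement as written.
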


\subsection{Proof of Theorem \ref{thm:L-moment-asymptotic}}

According to the approximate functional equation for Dirichlet $L$-functions
(\ref{eq:L-approx-functional}), we have
\[
L({\textstyle \frac{1}{2}},\chi_{d_{0}}\chi)=2\sum_{n=1}^{\infty}\frac{(\chi_{d_{0}}\chi)(n)}{n^{1/2}}G_{\kappa}\left(\frac{n}{\sqrt{c_{0}d_{0}N}}\right),
\]
where $G_{\kappa}$ is given in (\ref{eq:G-def}), and $c_{0}$ is
given in (\ref{eq:Lfunctional-kappa,delta0-1}). From the definition
of $P_{d_{0},d_{1}}^{(\chi)}$ in Theorem \ref{thm:correction-factors}
we have
\[
P_{d_{0},d_{1}}^{(\chi)}({\textstyle \frac{1}{2}})=\sum_{f\mid d_{1}^{2}}\frac{\mu(f_{0})(\chi_{d_{0}}\chi)(f_{0})}{f_{0}^{1/2}},
\]
where we write $f=f_{0}f_{1}^{2}$ with $f_{0}$ squarefree. We shall
also use the expansion
\[
1-\frac{(\chi_{d_{0}}\chi)(2)}{2^{1/2}}=\sum_{g\mid2}\frac{\mu(g)(\chi_{d_{0}}\chi)(g)}{g^{1/2}}.
\]
Applying these expressions to $S(X;\chi)$, we have
\begin{multline}
S(X;\chi)=2\sum_{\substack{(d_{1},2N)=1}
}\sum_{\substack{(d_{0},2N)=1}
}\sum_{f_{1}\mid d_{1}}\sum_{g\mid2f_{1}^{2}}\frac{\mu(g)(\chi_{d_{0}}\chi)(g)}{h^{1/2}}\sum_{n=1}^{\infty}\frac{(\chi_{d_{0}}\chi)(n)}{n^{1/2}}G_{\kappa}\left(\frac{n}{\sqrt{c_{0}d_{0}N}}\right)h\left(\frac{d_{0}d_{1}^{2}}{X}\right).\label{eq:S-average-L}
\end{multline}

For a subset $H\subset\mathbb{N}$, we define $S_{H}(X;\chi)$ to
be the same as the expression (\ref{eq:S-average-L}) with the added
condition in the $n$-sum of $ng\in H$. We use Mellin inversion for
$G_{\kappa}$ to separate the variables, and by moving the $d_{0}$-sum
to the inside, we get that $S_{H}(X;\chi)$ equals
\begin{multline}
2\int_{(\frac{1}{2}+\varepsilon)}N^{\frac{s}{2}}\sum_{\substack{(d_{1},2N)=1}
}\sum_{f_{1}\mid d_{1}}\sum_{\substack{g\mid2f_{1}^{2}\\
(g,N)=1
}
}\frac{\mu(g)\chi(g)}{g^{1/2}}\sum_{\substack{ng\in H\\
(n,N)=1
}
}\frac{\chi(n)}{n^{s+\frac{1}{2}}}
\sum_{\substack{(d_{0},2N)=1}
}\tilde{G}{}_{\kappa}(s)\chi_{d_{0}}(ng)c_{0}^{s/2}d_{0}^{s/2}h\left(\frac{d_{0}d_{1}^{2}}{X}\right)ds.\label{eq:S_H-expanded}
\end{multline}
The variables $\kappa$ and $c_{0}$ depend on the residues of $d_{0}$
and $N$ modulo 4. Thus, given $\ell\in\lbrace\pm1\rbrace$, we define
$\kappa(\ell)$ and $c_{0}(\ell)$ to be the corresponding values
for $d_{0}\equiv\ell\ (\text{mod }4)$. For convenience, for $\iota\in\lbrace\pm1\rbrace$,
we define $S_{H}(X;\chi,\iota,\ell)$ to be the same as (\ref{eq:S_H-expanded})
except that $\kappa$ and $c_{0}$ are replaced with $\kappa(\ell)$
and $c_{0}(\ell)$, and $\psi_{\iota}(d_{0})$ is multiplied to the
summand in the $d_{0}$-sum.

Observing that $\frac{1}{2}(1\pm\psi_{-1}(d_{0}))$ is the characteristic
function of $d_{0}\equiv\pm1\ (\text{mod }4)$, the $d_{0}$-sum in
the integrand of (\ref{eq:S_H-expanded}) is
\[
\frac{1}{2}\sum_{\substack{(d_{0},2N)=1}
}\sum_{\pm}(1\pm\psi_{-1}(d_{0}))\tilde{G}{}_{\kappa(\pm1)}(s)c_{0}(\pm1)^{s/2}\tilde{\chi}_{ng}(d_{0})d_{0}{}^{s/2}h\left(\frac{d_{0}d_{1}^{2}}{X}\right),
\]
whence we see that
\begin{equation}
S_{H}(X;\chi)=\frac{1}{2}\sum_{\iota=\pm1}\sum_{\ell=\pm1}\text{sgn}(1+\iota+\ell)S_{H}(X;\chi,\iota,\ell).\label{eq:S_H-sieve-d_0-mod-4}
\end{equation}
For treatment of the $d_{0}$-sum, for a positive real number $Y$
and a Dirichlet character $\psi$ we further define 
\[
T(s;Y,\psi)=\sum_{d_{0}=1}^{\infty}\psi(d_{0})d_{0}^{s/2}h\left(\frac{d_{0}}{Y}\right),
\]
where the sum is over squarefree $d_{0}$. With these simplifications,
we obtain that $S_{H}(X,\chi,\iota,\ell)$ equals
\begin{multline}
2\int_{(1/2+\varepsilon)}(Nc_{0}(\ell))^{s/2}\tilde{G}{}_{\kappa(\ell)}(s)\sum_{\substack{(d_{1},2N)=1}
}\sum_{f_{1}\mid d_{1}}\sum_{\substack{g\mid2f_{1}^{2}\\
(g,N)=1
}
}\frac{\mu(g)\chi(g)}{g^{1/2}}
\sum_{\substack{ng\in H\\
(n,N)=1
}
}\frac{\chi(n)}{n^{s+1/2}}T(s;X/d_{1}^{2},\chi_{0}^{(2N)}\tilde{\chi}_{ng}\psi_{\iota})ds.\label{eq:S_H-expanded-2}
\end{multline}
We shall choose $H=\square$ which we use to denote the set of positive
squares, and denoting the complement of $H$ in $\mathbb{N}$ by $\bar{H}$,
it is clear that
\[
S(X;\chi)=S_{\square}(X;\chi)+S_{\bar{\square}}(X;\chi).
\]

In order to estimate the size of $T(s;Y,\psi)$, we observe that there
are order $Y$ squarefree numbers up to $Y$. If $\psi$ is a principal
character and $\Re s=\varepsilon$, we therefore expect this sum to
be roughly of size $Y^{1+\varepsilon}$. If $\psi$ is non-principal,
the oscillations will give us a P\'olya-Vinogradov type estimate.

Indeed, with this last point in mind, we step back to explain the
main idea of the proof: The sum $S_{H}(X,\chi,\psi_{\iota},\ell)$
will hence only be large when $\tilde{\chi}_{ng}\psi_{\iota}$ is
principal, that is, precisely when $H=\square$ and $\iota=1$, and
will be small otherwise. To this end, we have the following asymptotic
formula.
\begin{lem}
\label{lem:T-asymptotics}For $\Re s>0$, we have
\[
T(s;Y,\chi_{0}^{(m)})=\frac{\tilde{h}(1+s/2)}{\zeta(2)}\prod_{p\mid m}\left(1+\frac{1}{p}\right)^{-1}Y{}^{1+s/2}+U(s;Y,m)
\]
with $U(s;Y,m)$ holomorphic and
\[
U(s;Y,m)\ll\vert s\vert^{1/2-\Re s/2+\varepsilon}m^{\varepsilon}Y^{1/2+\Re s/2+\varepsilon},
\]
uniformly in $Y$ and $\Im s$. Further, if $\psi=\chi_{0}^{(m)}\tilde{\psi}$,
where $\tilde{\psi}$ is a nontrivial quadratic primitive character
with conductor $c$, then
\[
T(s;Y,\psi)\ll\vert cs\vert^{1/2-\Re s/2+\varepsilon}m{}^{\varepsilon}Y^{1/2+\Re s/2+\varepsilon}
\]
uniformly in $\Im s$, m, and $Y$.\end{lem}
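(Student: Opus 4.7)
The strategy is to express $T(s;Y,\psi)$ as a contour integral via Mellin inversion and evaluate it by shifting past the pole at $u=1$ of the associated Dirichlet series. For $\sigma$ sufficiently large, Mellin inversion of the weight $h$ gives
\[
T(s;Y,\psi) = \frac{1}{2\pi i}\int_{(\sigma)}\tilde h(w)\,Y^w\,D\bigl(w-\tfrac{s}{2};\psi\bigr)\,dw,\qquad D(u;\psi) := \sum_{n\geq 1}\frac{\mu(n)^2\psi(n)}{n^u},
\]
and comparing Euler products yields the identity $D(u;\psi) = L(u,\psi)/L(2u,\psi^2)$ for $\Re u>1$.

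In the principal case $\psi = \chi_0^{(m)}$, we have $L(u,\chi_0^{(m)}) = \zeta(u)\prod_{p\mid m}(1-p^{-u})$, so $D(u;\chi_0^{(m)}) = \zeta(u)\zeta(2u)^{-1}\prod_{p\mid m}(1+p^{-u})^{-1}$, whose only singularity in $\Re u > 1/2$ is the simple pole at $u=1$ inherited from $\zeta$. Shifting the contour from $\Re w = \sigma$ to $\Re w = \tfrac{1}{2}+\tfrac{\Re s}{2}+\varepsilon$ crosses this pole at $w = 1+s/2$ with residue
\[
\tilde h(1+\tfrac{s}{2})\,\zeta(2)^{-1}\prod_{p\mid m}(1+p^{-1})^{-1}\,Y^{1+s/2},
\]
matching the asserted main term exactly. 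The shifted integral is $U(s;Y,m)$. On the new contour $\Re(w-s/2)=\tfrac{1}{2}+\varepsilon$, so by convexity $|\zeta(w-s/2)|\ll (1+|\Im(w-s/2)|)^{1/4+\varepsilon}$, while $\zeta(2w-s)^{-1}$ is bounded on $\Re(2w-s)=1+2\varepsilon$ and the remaining Euler factors contribute $O(m^\varepsilon)$. Writing $w=\tfrac{1}{2}+\tfrac{\Re s}{2}+\varepsilon+i\tau$, the rapid decay of $\tilde h(w)$ in $|\tau|$ localizes the integrand near $\tau = 0$, at which point $\Im(w-s/2)\asymp -\Im s/2$; this produces the $|s|$-dependence claimed and gives the stated bound after absorbing the interpolation slack.

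For the non-principal case $\psi = \chi_0^{(m)}\tilde\psi$ with $\tilde\psi$ primitive quadratic of conductor $c$, the same Mellin representation applies with $L(u,\psi) = L(u,\tilde\psi)\prod_{p\mid m,\,p\nmid c}(1-\tilde\psi(p)p^{-u})$ entire, so the contour shift picks up no residue, and $\psi^2$ being principal means $L(2u,\psi^2)^{-1}$ is bounded on the new contour. The identical bounding argument applies, now using the convexity bound $|L(\tfrac{1}{2}+\varepsilon+i\tau,\tilde\psi)|\ll (c(1+|\tau|))^{1/4+\varepsilon}$ in place of the $\zeta$-estimate, yielding the second claim. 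The main technical care needed is to verify uniformity of the convexity constants in $m$, $c$, and $\Im s$, and to ensure the shifted contour stays clear of the zeros of $L(2u,\psi^2)$, all of which lie in $\Re u\leq 1/2$; everything else is routine bookkeeping of Euler factors, and the resulting estimates imply the stated bounds via Phragm\'en--Lindel\"of interpolation in $\Re s$.
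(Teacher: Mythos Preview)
Your argument is correct and is essentially the paper's own proof: Mellin inversion of $h$, identification of the squarefree generating series as $L(u,\tilde\psi)/L(2u,\tilde\psi^{2})$ times local Euler factors at primes dividing $m$, a contour shift to $\Re w=\tfrac12+\tfrac{\Re s}{2}+\varepsilon$ that picks up the pole at $w=1+s/2$ in the principal case, and the convexity bound for the numerator $L$-function on the shifted line together with rapid decay of $\tilde h$. The closing appeal to Phragm\'en--Lindel\"of interpolation in $\Re s$ is unnecessary; the shifted-contour estimate already yields the stated bound directly.
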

\begin{proof}
Via Mellin inversion, we have
\[
T(s;Y,\chi_{0}^{(m)}\tilde{\psi})=\int_{(1+\Re s/2+\varepsilon)}\mathcal{T}_{s}(z)\tilde{h}(z)Y^{z}dz,
\]
where we have the generating function
\begin{eqnarray*}
\mathcal{T}_{s}(z) & = & \sum_{d=1}^{\infty}\frac{\mu^{2}(d)(\chi_{0}^{(m)}\tilde{\psi})(d)}{d^{z-s/2}}=\frac{L(z-s/2,\tilde{\psi})}{L(2z-s,\tilde{\psi})}\prod_{p\mid m}\left(1+\frac{1}{p^{z-s/2}}\right)^{-1}.
\end{eqnarray*}
If $\tilde{\psi}$ is the trivial character then the $L$-function
in the numerator is just the zeta function, and therefore has a pole
at $z=1+s/2$. Due to the $1/L(2z-s,\tilde{\psi})$ factor, all other
poles lie in $\Re z<\Re s/2$. We move the contour to $(1/2+\Re s/2+\varepsilon)$,
and in the case where $\tilde{\psi}$ is trivial, we pick up the residue
\begin{equation}
\frac{\tilde{h}(1+s/2)}{\zeta(2)}\prod_{p\mid m}\left(1+\frac{1}{p}\right)^{-1}Y{}^{1+s/2}.\label{eq:T-main-term}
\end{equation}
Due to (\ref{eq:w-bound}) along with the convexity bound
\[
L(z-s/2,\tilde{\psi})\ll[c(1+\vert\Im z-\Im s/2\vert)]^{1/2-\Re s/2+\varepsilon}
\]
obtained from (\ref{eq:L-bounds}), we bound the resulting integral
by 
\[
\vert cs\vert^{1/2-\Re s/2+\varepsilon}m^{\varepsilon}Y{}^{1/2+\Re s/2+\varepsilon}.
\]

\end{proof}

\subsubsection{Main term}

As explained above, the main contribution will come from $S_{\square}(X;\chi,1,\ell)$,
which we will bound from below. Looking at the expansion (\ref{eq:S_H-expanded-2})
with $\iota=\ell=1$ and $H=\square$, since $ng$ is square, and
recalling that $g$ is squarefree, we have $n=gm{}^{2}$ for $m\in\mathbb{N}$,
so the inner sum is 
\[
\sum_{\substack{ng\in\square\\
(n,N)=1
}
}\frac{1}{n^{s+1/2}}T(s;X/d_{1}^{2},\chi_{0}^{(2ngN)})=\frac{1}{g^{s+1/2}}\sum_{\substack{(m,N)=1}
}\frac{1}{m^{2s+1}}T(s;X/d_{1}^{2},\chi_{0}^{(2gmN)}).
\]
By Lemma \ref{lem:T-asymptotics}, the above expression is
\begin{multline*}
\frac{1}{g^{s+1/2}}\sum_{\substack{(m,N)=1}
}\frac{1}{m^{2s+1}}\left[\prod_{\substack{p\mid2gmN}
}\left(1+\frac{1}{p}\right)^{-1}\frac{\tilde{h}(1+s/2)}{\zeta(2)}\left(\frac{X}{d_{1}^{2}}\right)^{1+s/2}+U(s;X/d_{1}^{2},2gmN)\right]
\end{multline*}
where $U(s;X/d_{1}^{2},2gmN)$ is holomorphic for $\Re s>0$ and 
\[
U(s;X/d_{1}^{2},2gmN)\ll\vert s\vert^{1/2-\Re s/2+\varepsilon}(gmN)^{\varepsilon}\left(X/d_{1}^{2}\right)^{1/2+\Re s/2}.
\]
Referring to (\ref{eq:S_H-expanded-2}), and moving the contour to
$(\varepsilon)$, the error term is bounded above by
\begin{multline}
\sum_{\substack{(d_{1},2N)=1}
}\int_{(\varepsilon)}\vert Nc_{0}(\ell)\vert^{\Re s/2}\vert\tilde{G}{}_{\kappa(\ell)}(s)\vert\vert s\vert^{1/2-\Re s/2+\varepsilon}
\sum_{f_{1}\mid d_{1}}\sum_{\substack{g\mid2f_{1}^{2}\\
(g,N)=1
}
}\frac{1}{g^{s+1}}\sum_{\substack{(m,N)=1}
}\frac{(gmN)^{\varepsilon}}{m^{2s+1}}\left(\frac{X}{d_{1}^{2}}\right)^{1/2+\Re s/2}ds.\label{eq:S-square-error}
\end{multline}
Bounding the sums absolutely and using the fact that $\tilde{G}{}_{\kappa(\ell)}(s)$
decays rapidly in fixed vertical strips, we see that this is bounded
above by $N^{\varepsilon}X^{1/2+\varepsilon}$. As for the main term,
through a calculation we have the following result.
\begin{lem}
\label{lem:Euler-E-composition}We have the identity
\[
\sum_{\substack{(m,N)=1}
}\frac{1}{m^{2s+1}}\prod_{\substack{p\mid2gmN}
}\left(1+\frac{1}{p}\right)^{-1}=\zeta(2s+1)E_{0}(s)E_{1}(s;g),
\]
where
\[
E_{0}(s)=\frac{4}{9}(1-2^{-2s-1})\prod_{p}(1+p^{-1})(1+p^{-1}(1-p^{-2s-1})^{-1})^{-1}\prod_{p\mid N}(1+p^{-1})^{-2}(1+p^{-1}-p^{-2s-1}),
\]
\[
E_{1}(s;g)=\prod_{\substack{p\mid g\\
g\neq2
}
}(1+p^{-1})^{-2}(1-p^{-2s-1})^{-1}(1+p^{-1}-p^{-2s-1}).
\]
\end{lem}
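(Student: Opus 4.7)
The claim is an Euler product identity, and the plan is to expand both sides as products over primes and verify factor-by-factor. First I would pull the $m$-independent piece
\[
\prod_{p\mid 2gN}\left(1+\tfrac{1}{p}\right)^{-1}
\]
out of the sum, leaving $\sum_{(m,N)=1}\eta(m)m^{-2s-1}$ with $\eta(m):=\prod_{p\mid m,\,p\nmid 2gN}(1+p^{-1})^{-1}$, a genuinely multiplicative function of $m$. Since $g\mid 2f_1^2$ with $f_1\mid d_1$ and $(d_1,2N)=1$, we have $\gcd(g,N)=1$, so the primes not dividing $N$ split cleanly into the disjoint classes $\{p\mid 2g\}$ and $\{p\nmid 2gN\}$, and the inner sum factors as an Euler product over these two classes.

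Next I would compute the local Euler factor at each prime $p\nmid N$. For $p\mid 2g$ one has $\eta(p^k)=1$ for all $k$, yielding the local factor $(1-p^{-2s-1})^{-1}$. For $p\nmid 2gN$, summing the geometric series gives
\[
1+(1+p^{-1})^{-1}\cdot\frac{p^{-2s-1}}{1-p^{-2s-1}}=\frac{1+p^{-1}-p^{-2s-2}}{(1+p^{-1})(1-p^{-2s-1})}.
\]
Factoring out $\zeta(2s+1)=\prod_p(1-p^{-2s-1})^{-1}$ from the full product then exhibits the remainder as a convergent Euler product whose generic local factor matches the infinite product $\prod_p(1+p^{-1})(1+p^{-1}(1-p^{-2s-1})^{-1})^{-1}$ appearing in $E_0(s)$.

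Finally I would isolate the three exceptional prime classes to collect the finite-product corrections on the RHS. The prime $p=2$ always lies in $\{p\mid 2g\}$, and, because its contribution is fixed and independent of both $g$ and $N$, its deviation from the generic factor is absorbed into the constant prefactor $\tfrac{4}{9}(1-2^{-2s-1})$. The primes $p\mid N$ never appear in the $m$-sum at all, so their discrepancy with the generic factor yields precisely the correction $\prod_{p\mid N}(1+p^{-1})^{-2}(1+p^{-1}-p^{-2s-1})$. Each odd $p\mid g$ contributes $(1-p^{-2s-1})^{-1}$ in place of the generic factor, and the resulting finite ratio is exactly $E_1(s;g)$. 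The main obstacle is purely algebraic bookkeeping: three distinct deviations from the generic local factor must be organized and simplified simultaneously, but no analytic input beyond multiplicativity and the summation of a single geometric series is required.
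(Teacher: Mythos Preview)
Your proposal is correct and is precisely the approach the paper has in mind: the paper's proof reads in its entirety ``This is a straightforward but monotonous calculation which we omit,'' and the Euler-product factor-by-factor verification you outline is exactly that calculation. Your decomposition into the $m$-independent prefactor, the multiplicative $\eta$, and the three exceptional prime classes ($p=2$, $p\mid N$, odd $p\mid g$) is the natural way to organize the bookkeeping.
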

\begin{proof}
This is a straightforward but monotonous calculation which we omit.
\end{proof}
Applying this, we now have that $S_{\square}(X,\chi,1,\ell)$ equals, up to an error $O(N^{\varepsilon}X^{1/2+\varepsilon})$,
\begin{multline*}
\frac{2}{\zeta(2)}\int_{(\varepsilon)}\tilde{h}(1+{\textstyle \frac{s}{2}})(c_{0}(\ell)N)^{s/2}X^{1+s/2}\tilde{G}{}_{\kappa(\ell)}(s)\zeta(2s+1)E_{0}(s)
\sum_{\substack{(d_{1},2N)=1}
}d_{1}^{-2-s}\sum_{f_{1}\mid d_{1}}\sum_{\substack{g\mid2f_{1}^{2}\\
(g,N)=1
}
}\frac{\mu(g)}{g^{s+1}}E_{1}(s;g)ds.
\end{multline*}
Since $E(s;g)$ is multiplicative in $g$, we can further collapse
the $g$-sum above into an Euler product. Hence we have
\begin{equation}
S_{\square}(X,\chi,1,\ell)=\frac{2}{\zeta(2)}\int_{(\varepsilon)}\tilde{h}(1+s/2)(c_{0}(\ell)N)^{s/2}X^{1+s/2}\tilde{G}{}_{\kappa(\ell)}(s)\zeta(2s+1)E_{0}(s)H(s)ds+O(N^{\varepsilon}X^{1/2+\varepsilon}),\label{eq:S-1-ell}
\end{equation}
where
\[
H(s) = \sum_{\substack{(d_{1},2N)=1}
}d_{1}^{-2-s}\sum_{f_{1}\mid d_{1}}\prod_{p\mid2f_{1}}\left(1-p^{-s-1}E_{1}(s;p)\right).
\]
We have the following estimates for $H$.
\begin{lem}
\label{lem:H-bounds}There exists $K>0$ such that $1/3\leq H(0)\leq K$
and $\vert H'(0)\vert\leq K$.\end{lem}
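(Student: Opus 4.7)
The plan is to exhibit $H(s)$ as an absolutely convergent Euler product on a neighborhood of $s=0$ and then read off both bounds from the individual local factors. Since $(d_{1},2N)=1$ forces every $f_{1}\mid d_{1}$ to be odd, and $E_{1}(s;2)=1$ is empty, the inner product factors as $\prod_{p\mid 2f_{1}}(1-p^{-s-1}E_{1}(s;p))=(1-2^{-s-1})\prod_{p\mid f_{1}}(1-p^{-s-1}E_{1}(s;p))$. Writing $h(p,s):=1-p^{-s-1}E_{1}(s;p)$, the arithmetic function $a(d_{1},s):=\sum_{f_{1}\mid d_{1}}\prod_{p\mid f_{1}}h(p,s)$ is multiplicative in $d_{1}$ with prime-power values $a(p^{k},s)=1+k\,h(p,s)$. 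Summing the geometric-type inner series $\sum_{k\geq 0}u^{k}(1+kh)=(1-(1-h)u)/(1-u)^{2}$ with $u=p^{-2-s}$ gives
\[
H(s)=(1-2^{-s-1})\prod_{p\nmid 2N}L_{p}(s),\qquad L_{p}(s)=\frac{1-p^{-2s-3}E_{1}(s;p)}{(1-p^{-2-s})^{2}}.
\]

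A direct expansion shows $L_{p}(s)=1+O(p^{-2-\Re s})$ with an absolute implied constant (since $E_{1}(s;p)$ is bounded on the strip $\Re s>-1/2$), so the product converges absolutely and uniformly in $N$ on a neighborhood of $s=0$, making $H$ holomorphic there and amenable to termwise differentiation. For the lower bound I compute $p^{-3}E_{1}(0;p)=((p+1)^{2}(p-1))^{-1}<1$ and, more usefully, $h(p,0)=(p^{3}-p-1)/((p+1)^{2}(p-1))>0$ for every prime $p\geq 2$, so the identity $L_{p}(s)=(1-u)^{-1}+h(p,s)\,u(1-u)^{-2}$ at $s=0$ yields
\[
L_{p}(0)=\frac{1}{1-p^{-2}}+h(p,0)\,\frac{p^{-2}}{(1-p^{-2})^{2}}>\frac{1}{1-p^{-2}}>1.
\]
Hence $\prod_{p\nmid 2N}L_{p}(0)>1$ and $H(0)>1/2\geq 1/3$. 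For the upper bound, removing the two factors at $p=2$ and $p=N$ (each greater than $1$) from the full product only decreases it, so
\[
\prod_{p\nmid 2N}L_{p}(0)\leq\prod_{p}L_{p}(0)\leq\prod_{p}(1-p^{-2})^{-2}=\zeta(2)^{2},
\]
giving $H(0)\leq\zeta(2)^{2}/2$.

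Finally, logarithmic differentiation gives $H'(0)/H(0)=\log 2+\sum_{p\nmid 2N}L_{p}'(0)/L_{p}(0)$, and the asymptotic $L_{p}(s)-1=O(p^{-2-\Re s})$ together with direct differentiation of the Euler factor yields $L_{p}'(0)/L_{p}(0)=O((\log p)/p^{2})$, so the series converges absolutely and uniformly in $N$. Combined with the upper bound for $H(0)$ already established, this gives $|H'(0)|\leq K$ for a uniform constant $K$. The main obstacle is algebraic rather than analytic: reorganising $H(s)$ into a clean Euler product in which the sign of $L_{p}(0)-1$ is transparent. Once the explicit formula for $h(p,0)$ is in hand, positivity is immediate and all remaining estimates are routine consequences of the $O(p^{-2})$ decay of $L_{p}(s)-1$.
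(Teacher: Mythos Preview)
Your proof is correct and takes a genuinely different route from the paper. The paper argues directly at the level of the double sum defining $H$: it sets $H(s;d_1)=\sum_{f_1\mid d_1}\prod_{p\mid 2f_1}(1-p^{-s-1}E_1(s;p))$, observes from $0<E_1(0;p)\le 4/3$ that each factor $1-p^{-1}E_1(0;p)$ lies in $[1/3,1)$, hence every $H(0;d_1)$ is positive and the single term $d_1=1$ already gives $H(0)\ge 1/3$; the upper bound comes from the crude divisor estimate $|H(0;d_1)|\le d(d_1)\ll d_1^{\varepsilon}$, and $H'(0)$ is handled by differentiating termwise and checking that $E_1'(0;p)$ is bounded.

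Your Euler-product reorganisation $H(s)=(1-2^{-s-1})\prod_{p\nmid 2N}L_p(s)$ with $L_p(s)=(1-p^{-2s-3}E_1(s;p))/(1-p^{-2-s})^2$ is cleaner and yields sharper constants: you get $H(0)>1/2$ (rather than $\ge 1/3$) and an explicit upper bound $\zeta(2)^2/2$, and the derivative bound drops out of a convergent logarithmic derivative rather than a term-by-term estimate. The paper's argument is marginally shorter because it avoids the algebraic step of collapsing the $f_1$-sum into a closed-form local factor, but your version makes the uniformity in $N$ more transparent (removing the single Euler factor at $p=N$ changes things by $1+O(N^{-2})$). Both approaches rest on the same positivity fact $h(p,0)=1-p^{-1}E_1(0;p)>0$; you just package it multiplicatively.
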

\begin{proof}
Let $\Re s\geq0$, and for convenience define
\[
H(s;d_{1})=\sum_{f_{1}\mid d_{1}}\prod_{p\mid2f_{1}}\left(1-p^{-s-1}E_{1}(s;p)\right).
\]
Because $0<E_{1}(0;p)\leq 4/3$, we have $1/3\leq1-p^{-1}E_{1}(0;p)<1$,
and so taking $d_{1}=1$, we have $H(0)\geq1/3$. Hence by the same
reasoning,
\[
\vert H(0;d_{1})\vert\leq\sum_{n\mid d_{1}}1\ll d_{1}^{\varepsilon},
\]
and so we see that $H(0)$ is absolutely bounded above. In order to
show that $H'(0)$ is absolutely bounded, because we have
\[
H'(0)=\sum_{\substack{(d_{1},2N)=1}
}d_{1}^{-2}(H'(0;d_{1})-H(0;d_{1})\log d_{1}),
\]
it suffices to show that $H'(0;d_{1})\ll d_{1}^{\varepsilon}$, and
for this, it suffices to show that $E_{1}'(0;p)$ is absolutely bounded,
which is easily seen via taking the logarithmic derivative.
\end{proof}
We shall also need the following bounds for the $E_{0}$ function.
\begin{lem}
\label{lem:E-bounds}We have the bounds
\[
N^{-\varepsilon}\ll E_{0}(0),\ E_{0}'(0)\ll N^{\varepsilon}.
\]
\end{lem}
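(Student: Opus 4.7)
The plan is to evaluate $E_0(0)$ directly in closed form and then compute $E_0'(0)/E_0(0)$ via logarithmic differentiation; both quantities will turn out to be bounded above and below by absolute positive constants (uniformly in $N$), which is much stronger than the $N^{\pm\varepsilon}$ bounds claimed.

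First I would simplify $E_0(0)$. At $s=0$ the prefactor $(1-2^{-2s-1})$ equals $1/2$. For each prime $p$, the local factor in the infinite product is
\[
(1+p^{-1})\left(1+\frac{p^{-1}}{1-p^{-2s-1}}\right)^{-1},
\]
which at $s=0$ becomes $(1+p^{-1})\cdot\frac{p-1}{p}=1-p^{-2}$; hence the infinite product telescopes to $1/\zeta(2)$. The factor indexed by $p\mid N$ at $s=0$ is $(1+p^{-1})^{-2}\cdot 1 = (1+p^{-1})^{-2}$, since $1+p^{-1}-p^{-1}=1$. Because $N$ is prime, this finite product contributes only $(1+N^{-1})^{-2}$, a quantity lying in $[1/4,1]$. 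Combining these evaluations,
\[
E_0(0) = \frac{4}{9}\cdot\frac{1}{2}\cdot\frac{1}{\zeta(2)}\cdot(1+N^{-1})^{-2} = \frac{4}{3\pi^2}(1+N^{-1})^{-2},
\]
which is $\asymp 1$ uniformly in $N$ and in particular satisfies $N^{-\varepsilon}\ll E_0(0)\ll N^\varepsilon$.

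For $E_0'(0)$, I would write $E_0'(0)=E_0(0)\cdot(\log E_0)'(0)$ and differentiate $\log E_0(s)$ factor by factor. A short calculation (using that $h_p(s):=1+p^{-1}(1-p^{-2s-1})^{-1}$ has $h_p(0)=p/(p-1)$ and $h_p'(0)=-2(\log p)/(p-1)^2$) yields
\[
(\log E_0)'(0) = 2\log 2 + \sum_{p}\frac{2\log p}{p(p-1)} + \frac{2\log N}{N},
\]
with the three contributions arising from the $2^{-2s-1}$ prefactor, the infinite Euler product, and the single $p\mid N$ correction respectively. Crucially every term is strictly positive, the series $\sum_{p}2(\log p)/(p(p-1))$ converges absolutely to a positive constant (bounded termwise by $2\log n / n^2$), and $(2\log N)/N = o(1)$. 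Thus $(\log E_0)'(0)$ is pinned between two absolute positive constants uniformly in $N$, and multiplying by the estimate for $E_0(0)$ gives $E_0'(0)\asymp 1$, hence the claimed two-sided bound.

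There is no serious analytic obstacle here; the work is entirely bookkeeping of Euler factors together with verifying the positivity of each summand contributing to $(\log E_0)'(0)$, which is what secures the lower bound (and in particular rules out accidental cancellation). The same approach would handle a composite $N$ with minor notational overhead, since the $p\mid N$ product then contributes $O(N^\varepsilon)$-bounded factors both to $E_0(0)$ and to $(\log E_0)'(0)$.
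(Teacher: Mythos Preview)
Your proof is correct and follows essentially the same approach as the paper: direct evaluation of $E_0(0)$ via the Euler product, and logarithmic differentiation for $E_0'(0)$. You carry out the computations in more detail than the paper (which merely asserts that the logarithmic derivative is bounded), and in fact you obtain the stronger conclusion $E_0(0), E_0'(0)\asymp 1$ uniformly in $N$; your positivity check on each term of $(\log E_0)'(0)$ is exactly what is needed to secure the lower bound.
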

\begin{proof}
We have
\[
N^{-\varepsilon} \ll d(N) \ll \frac{2}{9\zeta(2)}\prod_{p\mid N}(1+p^{-1})^{-2}
=E_{0}(0)\ll 1,
\]
and the result follows.

To treat the derivative, it is easily shown that the logarithmic derivative at $s=0$ is bounded below by a constant and above by $N^\varepsilon$, as required.
\end{proof}
We wish to move the contour of integration of (\ref{eq:S-1-ell})
from $(\varepsilon)$ to $(-\varepsilon)$. In doing so, we pick up
a double pole, since $\tilde{G}_{\kappa(\ell)}(s)$ and $\zeta(2s+1)$
have simple poles at $s=0$. The residue of this pole shall be our
main term. In order to calculate it, we shall need further analysis
of the integrand. We define
\[
A(s)=\tilde{h}(1+s/2)N^{s/2}X^{1+s/2}E_{0}(s)H(s)
\]
which is the part of the integrand which is holomorphic at $s=0$.
If the Laurent coefficients of $\zeta(2s+1)$ and $\tilde{G}_{\kappa(\ell)}$
(centred at $0$) are given by $e_{n}$ and $g_{n}$ respectively,
then the residue is
\begin{equation}
R:=(g_{-1}e_{0}+g_{0}e_{-1})A(0)+g_{-1}e_{-1}A'(0),\label{eq:int-residue-coeff}
\end{equation}
and
\begin{equation}
A(0)=\tilde{h}(1)E_{0}(0)H(0)X,\label{eq:A-expanded}
\end{equation}
\begin{multline}
A'(0)=\frac{1}{2}\tilde{h}'(1)E_{0}(0)H(0)X+\frac{1}{2}\tilde{h}(1)E_{0}(0)H(0)(\log N)X\\
+\frac{1}{2}\tilde{h}(1)E_{0}(0)H(0)X\log X+\tilde{h}(1)E_{0}'(0)H(0)X+\tilde{h}(1)E_{0}(0)H'(0)X.\label{eq:A'-expanded}
\end{multline}
We now prove the following expression of the residue $R$ from the
above results.
\begin{cor}
For sufficiently large $N$, we have
\[
R=a_{N}X\log X+b_{N}X,
\]
where
\[
N^{-\varepsilon}\ll a_{N},b_{N}\ll N^{\varepsilon}.
\]
\end{cor}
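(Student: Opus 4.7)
The first step is a clean bookkeeping computation: substitute the formulas (\ref{eq:A-expanded}) for $A(0)$ and (\ref{eq:A'-expanded}) for $A'(0)$ into (\ref{eq:int-residue-coeff}) and separate the pieces carrying a factor of $\log X$ from those that do not. Only one term of $A'(0)$ is proportional to $\log X$, so immediately
\[
a_N \;=\; \tfrac{1}{2}\,g_{-1}e_{-1}\,\tilde{h}(1)\,E_0(0)\,H(0),
\]
while $b_N$ is the sum of $(g_{-1}e_0+g_0e_{-1})\tilde{h}(1)E_0(0)H(0)/1$ together with the non-logarithmic terms of $g_{-1}e_{-1}A'(0)/X$ and, crucially, a single term $\tfrac{1}{2}\,g_{-1}e_{-1}\,\tilde{h}(1)\,E_0(0)\,H(0)\log N$ arising from differentiating $N^{s/2}$ at $s=0$.

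For $a_N$, I would observe that $g_{-1}=1$ (the $\Gamma$-quotient in (\ref{eq:G-def}) defining $\tilde{G}_{\kappa(\ell)}$ equals $1$ at $s=0$, so $\tilde{G}_{\kappa(\ell)}(s)=1/s+O(1)$) and $e_{-1}=1/2$ (the residue of $\zeta(2s+1)$ at $s=0$), both positive absolute constants. Also $\tilde{h}(1)=\int_0^\infty h(y)\,dy>0$ is a positive absolute constant because $h$ is smooth, nonnegative, and unity on $[1/2,1]$. Combining this with Lemma \ref{lem:H-bounds} ($H(0)\asymp 1$) and Lemma \ref{lem:E-bounds} ($N^{-\varepsilon}\ll E_0(0)\ll N^{\varepsilon}$) yields $N^{-\varepsilon}\ll a_N\ll N^{\varepsilon}$ immediately.

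The substantive step is the lower bound on $b_N$. Every constituent of $b_N$ except the $\log N$ term is a product of absolute constants (the Laurent coefficients $g_{-1},g_0,e_{-1},e_0$ and the values $\tilde{h}(1),\tilde{h}'(1)$) with quantities of size $N^{O(\varepsilon)}$ controlled by Lemmas \ref{lem:H-bounds}, \ref{lem:E-bounds}; in particular each such term is $O(N^{\varepsilon})$ and does not grow with $N$. The $\log N$ term, by contrast, has exactly the same positive coefficient as $a_N$, hence is $\gg N^{-\varepsilon}\log N$. Therefore, for sufficiently large $N$, this term dominates the bounded remainder and $b_N\asymp (\log N)\,E_0(0)\,H(0)$, which lies between $N^{-\varepsilon}$ and $N^{\varepsilon}$ since $\log N$ does. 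The main (and only non-routine) obstacle here is precisely this dominance step: one must be sure that no other hidden $\log N$ can appear in $b_N$ to cause cancellation. Since $\tilde{h}(1),\tilde{h}'(1),g_{-1},g_0,e_{-1},e_0$ are $N$-independent constants and $E_0,E_0',H,H'$ are controlled uniformly in $N$ by the cited lemmas, no such cancellation is possible, and strict positivity of the dominating coefficient (inherited from $g_{-1},e_{-1},\tilde{h}(1),E_0(0),H(0)>0$) closes the argument.
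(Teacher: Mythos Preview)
Your proposal is correct and follows essentially the same line as the paper's proof: identify $a_N=\tfrac12 g_{-1}e_{-1}\tilde h(1)E_0(0)H(0)$, use $g_{-1}=1$, $e_{-1}=\tfrac12$, $\tilde h(1)>0$ together with Lemmas~\ref{lem:H-bounds} and~\ref{lem:E-bounds} for the $a_N$ bounds, and for $b_N$ let the $\log N$ contribution from $A'(0)$ dominate the remaining pieces once $N$ is large.

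One small point of precision is worth tightening. As written, you bound the non-$\log N$ constituents of $b_N$ by $O(N^{\varepsilon})$ and the $\log N$ term from below by $N^{-\varepsilon}\log N$; these two estimates do \emph{not} by themselves force dominance, since $N^{-\varepsilon}\log N$ is eventually smaller than $N^{\varepsilon}$. The paper avoids this by comparing terms that share the common positive factor $E_0(0)H(0)$ directly: after factoring it out, the potentially negative pieces (coming from $g_{-1}e_0+g_0e_{-1}$ and from $\tilde h'(1)$) are absolute constants, while the dominating piece carries the genuine $\log N$. The $E_0'(0)$ term is in fact positive by Lemma~\ref{lem:E-bounds} (the lower bound there), so it need not be beaten at all; and the $H'(0)$ term shares the factor $E_0(0)$ and has $|H'(0)|\le K$ against $H(0)\log N\ge\tfrac13\log N$. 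With this refinement your argument matches the paper's exactly.
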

\begin{proof}
The first term comes from the $X\log X$ term in (\ref{eq:A'-expanded}).
We see that $g_{-1}=1$ due to the definition of $\tilde{G}_{\kappa}(s)$
given in (\ref{eq:G-def}), and we also easily see that $e_{-1}=1/2$,
so that the coefficient for $A'(0)$ in (\ref{eq:int-residue-coeff})
is positive. Now the bounds for $a_{N}$ follow from those for $E_{0}(0)$
and $H(0)$ from Lemmas \ref{lem:H-bounds} and \ref{lem:E-bounds},
and the fact that $\tilde{h}(1)$ is simply a positive constant.

Next, we prove the bounds for the $X$ term coefficient. The upper
bound follows from those for $E_{0}(0)$ and $H(0)$ in Lemmas \ref{lem:H-bounds}
and \ref{lem:E-bounds}, and again from the fact that $\tilde{h}(1)$
and $\tilde{h}'(1)$ are constants. As for the lower bound, there
are two difficulties: First, we do not know if the coefficient for
$A(0)$ in (\ref{eq:int-residue-coeff}) is negative, which would
result in a negative $X$ term contribution from (\ref{eq:A-expanded}).
Secondly, we do not know whether the $\tilde{h}'(1)$ factor in the
first term of (\ref{eq:A'-expanded}) is positive. Nonetheless, we
can simply compare the (positive) $X$ coefficient in the second term
of (\ref{eq:A'-expanded}) to that of the two terms just mentioned,
and choose $N$ large enough so that the $\log N$ factor dominates.
The lower bound then follows again from Lemmas \ref{lem:H-bounds}
and \ref{lem:E-bounds}.
\end{proof}
Now we just need to bound the remaining integral (\ref{eq:S-1-ell})
with contour $(-\varepsilon)$. Using the triangle inequality and
observing that $\tilde{G}{}_{\kappa(\ell)}$ decays rapidly in fixed
vertical strips, we therefore see that the integral with contour $(-\varepsilon)$
is bounded above by $N^{\varepsilon}X^{1-\varepsilon/2}.$

\subsubsection{Error term}

Using the same method for bounding the error term in the previous
section, we can bound $S_{\square}(X;\chi,-1,\ell)$ from above, since
its expansion according to (\ref{eq:S_H-expanded-2}) will have the
factor $T(s;X/d_{1}^{2},\chi_{0}^{(2ngN)}\psi_{-1})$. By Lemma \ref{lem:T-asymptotics}
this becomes (\ref{eq:S-square-error}). It now remains to bound $S_{\bar{\square}}(X;\chi,\iota,\ell)$
from above.

According to (\ref{eq:S_H-expanded}) with the integral contour moved
to $(3/4+\varepsilon)$, we see that $S_{\bar{\square}}(X;\chi,\iota,\ell)$ equals
\begin{multline*}
2\int_{(3/4+\varepsilon)}(c_{0}(\ell)N)^{s/2}\tilde{G}{}_{\kappa(\ell)}(s)\sum_{\substack{(d_{1},2N)=1}
}\sum_{f_{1}\mid d_{1}}\sum_{\substack{g\mid2f_{1}^{2}\\
(g,N)=1
}
}\frac{\mu(g)\chi(g)}{g^{1/2}}
\sum_{\substack{ng\in\bar{\square}\\
(n,N)=1
}
}\frac{\chi(n)}{n^{s+1/2}}T(s;X/d_{1}^{2},\chi_{0}^{(2N)}\tilde{\chi}_{ng}\psi_{\iota})ds.
\end{multline*}
Applying Lemma \ref{lem:T-asymptotics}, since $ng$ is not square,
the character $\chi_{0}^{(2N)}\tilde{\chi}_{ng}\psi$ is never principal,
so we have
\[
T(s;X/d_{1}^{2},\chi_{0}^{(2N)}\tilde{\chi}_{ng}\psi_{\iota})\ll(\vert s\vert ng)^{1/4+\varepsilon}N^{\varepsilon}(X/d_{1}^{2}){}^{7/8+\varepsilon}.
\]
Now we can absolutely bound the sums and ignore the condition that
$ng$ is not square, as we did for (\ref{eq:S-square-error}). We
note that the $n$-sum will absolutely converge since $\Re s=3/4+\varepsilon$,
as long as we select a small enough $\varepsilon$ in the bound for
the $T$-function above. We then arrive at a bound of
\[
S_{\bar{\square}}(X;\chi,\iota,\ell)\ll N^{3/8+\varepsilon}X^{7/8+\varepsilon},
\]
whence by (\ref{eq:S_H-sieve-d_0-mod-4}) we have sufficiently bounded
$S_{\bar{\square}}(X;\chi)$.

\section{\label{sec:Other-methods}Appendix: Subconvexity bounds applied to non-vanishing results}

Though the non-vanishing result Theorem \ref{thm:nonvanishing} proven in \S \ref{sec:Non-vanishing} does not require the subconvexity bound Theorem \ref{thm:subconvexity}, one might ask whether a subconvexity bound could be applied to such a problem. We positively answer this here, outlining another method presented in \cite{hk}.

In \S \ref{sec:Non-vanishing}, we start with (\ref{eq:int-sum}) and move the contour of the integral from $(2)$ to $(-\varepsilon)$, picking up a residue at $w=1$. Instead, we can move the contour to $(1/2)$. Now it is clear that a good subconvexity bound for $Z({\textstyle 1/2},w;\chi_N,\psi_1)$ with $\Re w = 1/2$ will produce a non-vanishing result. In our case, the subconvexity bound Theorem \ref{thm:subconvexity} will only yield an upper bound of $N^{2/3+\varepsilon}$, which is worse than that already presented. However, this formulation gives motivation for developing subconvexity bounds for double Dirichlet series. In particular, an improvement to Theorem \ref{thm:nonvanishing} would result from an improvement in the $N$ exponent lower than $1/4+\varepsilon$ in Theorem \ref{thm:subconvexity}.

\section*{Funding}

This work was supported by the Ontario Graduate Scholarship Program; and the European Research Council Starting Grant [258713].

\section*{Acknowledgements}

I sincerely thank Valentin Blomer for suggesting the problem and many useful ideas. I also sincerely thank the following individuals who provided useful suggestions and corrections: Solomon Friedberg, John Friedlander, Florian Herzig, Henry Kim, and Stephen Kudla.

\bibliographystyle{amsalpha}
\bibliography{paper-arxiv2}

\newcommand{\etalchar}[1]{$^{#1}$}
\providecommand{\bysame}{\leavevmode\hbox to3em{\hrulefill}\thinspace}
\providecommand{\MR}{\relax\ifhmode\unskip\space\fi MR }
\providecommand{\MRhref}[2]{%
  \href{http://www.ams.org/mathscinet-getitem?mr=#1}{#2}
}
\providecommand{\href}[2]{#2}
\begin{thebibliography}{BBC{\etalchar{+}}06}

\bibitem[BBC{\etalchar{+}}06]{weylgroup}
Benjamin Brubaker, Daniel Bump, Gautam Chinta, Solomon Friedberg, and Jeffrey
  Hoffstein, \emph{Weyl group multiple {D}irichlet series {I}}, Proc. Sympos.
  Pure Math. (2006).

\bibitem[BBFH07]{weyl-iii}
B.~Brubaker, D.~Bump, S.~Friedberg, and J.~Hoffstein, \emph{Weyl group multiple
  {D}irichlet series {III}: Eisenstein series and twisted unstable ${A}_r$},
  Ann. of Math. (2) \textbf{166} (2007), 293--316.

\bibitem[BFH04]{bfh}
D.~Bump, S.~Friedberg, and J.~Hoffstein, \emph{Sums of twisted {GL}(3)
  automorphic {L}-functions, {C}ontributions to automorphic forms, geometry,
  and number theory}, Johns Hopkins Univ. Press, 2004.

\bibitem[Blo11]{blomer}
V.~Blomer, \emph{Subconvexity for a double {D}irichlet series}, Compos. Math.
  \textbf{147} (2011), 355--374.

\bibitem[DGH03]{DGH03}
A.~Diaconu, D.~Goldfeld, and J.~Hoffstein, \emph{Multiple {D}irichlet series
  and moments of zeta and {L}-functions}, Compos. Math. \textbf{139} (2003),
  297--360.

\bibitem[GH85]{eisenstein-half-integral}
Dorian Goldfeld and Jeffrey Hoffstein, \emph{Eisenstein series of 1/2-integral
  weight and the mean value of real {D}irichlet {L}-series}, Invent. Math.
  \textbf{80} (1985), 185--208.

\bibitem[HB95]{hb}
D.~R. Heath-Brown, \emph{A mean value estimate for real character sums}, Acta
  Arith. \textbf{72} (1995), 235--275.

\bibitem[HK10]{hk}
J.~Hoffstein and A.~Kontorovich, \emph{The first non-vanishing quadratic twist
  of an automorphic {L}-series}, preprint (2010).

\bibitem[IK04]{IK04}
H.~Iwaniec and E.~Kowalski, \emph{Analytic {N}umber {T}heory}, American
  Mathematical Society, Providence, Rhode Island, 2004.

\bibitem[Jut81]{jutila}
M.~Jutila, \emph{On the mean value of $ {L}(1/2,\chi) $ for real characters},
  Analysis (Berlin) \textbf{1} (1981), 149--161.

\bibitem[Mol98]{mollin}
R.~Mollin, \emph{Fundamental number theory with applications}, CRC Press LLC,
  1998.

\bibitem[Wal81]{waldspurger}
Jean-Loup Waldspurger, \emph{Sur les coefficients de {F}ourier des formes
  modulaires de poids demi-entier}, J. Math. Pures Appl. (9) \textbf{60}
  (1981), 675--484.

\end{thebibliography}

\end{document}